\documentclass[a4paper, 11pt, reqno]{amsart}
\usepackage{hyperref}
\usepackage[active]{srcltx}
\usepackage[curve]{xypic}
\usepackage{graphicx}
\usepackage{mathrsfs}
\usepackage{longtable}
\usepackage{rotating}
\usepackage{multirow}
\usepackage{titletoc} 
\usepackage{tikz}
\topmargin -0.6cm
\headsep 5.mm
\textheight 23cm
\textwidth 16cm
\oddsidemargin -0.6cm
\evensidemargin -0.6cm
\parskip 0.24cm
\usepackage[active]{srcltx}
\usepackage{epsfig,amsmath}
\usepackage[english]{babel}
\usepackage{amsmath,amsfonts,amssymb,amscd,color,epsfig,amsthm}

\newtheorem{theorem}{Theorem}[section]
\newtheorem{lemma}{Lemma}[section]
\newtheorem{proposition}[lemma]{Proposition}
\newtheorem{corollary}[lemma]{Corollary}
\newtheorem{definition}{Definition}[section]
\newtheorem{remark}{Remark}[section]

\newtheorem{example}{\bf Example}[section]
\newtheorem{assumption}{\bf Assumption}[section]

\newtheorem{facts}[theorem]{\bf Fact}

\numberwithin{equation}{section}




\newcommand{\R}{\mathbb{R}}
\newcommand{\Z}{\mathbb{Z}}

\newcommand{\wt}{\widetilde}
\newcommand{\mb}{\mathbb}
\newcommand{\mc}{\mathcal}
\newcommand{\sm}{\setminus}
\newcommand{\tu}{\textup}
\newcommand{\ol}{\overline}
\newcommand{\es}{\emptyset}

\newcommand{\tb}{\textbf}

\newcommand{\wh}{\widehat}
\newcommand{\olC}{\widehat{\mathbb{C}}}

\def\EEE{{\cal E}}

\def\g{\gamma}
\def\G{\Gamma}

\def\de{\delta}

\def\sm{\setminus}
\def\R{\mbox{$\mathbb R$}}

\def\Z{\mbox{$\mathbb Z$}}

\def\lv{ \left(\begin{matrix} }
 \def\rv{\end{matrix}\right)}

\def\g{\gamma}

\def\cal{\mathcal}

\def\dw{{\dw}}

\newcommand{\mylabel}[1]{\label{#1}}

\newcommand{\REFEQN}[1] { \begin{equation}\mylabel{#1} }
\newcommand{\ENDEQN}{\end{equation}}
\newcommand{\REFTHM}[1] { \begin{theorem}\mylabel{#1} }
\newcommand{\ENDTHM}{\end{theorem}}
\newcommand{\REFNTH}[1] { \begin{newthm}\mylabel{#1} }
\newcommand{\ENDNTH}{\end{newthm}}
\newcommand{\REFPROP}[1]{\begin{proposition}\mylabel{#1} }
\newcommand{\ENDPROP}{\end{proposition} }
\newcommand{\REFLEM}[1]{\begin{lemma}\mylabel{#1} }
\newcommand{\ENDLEM}{\end{lemma} }
\newcommand{\REFCOR}[1]{\begin{corollary}\mylabel{#1} }
\newcommand{\ENDCOR}{\end{corollary} }

\def\ov{\overline}

\title{Invariant graphs of rational maps}
\author{Guizhen Cui}
\address{Guizhen Cui, College of Mathematics and Statistics, Shenzhen University and Academy of Mathematics and Systems Science, Chinese Academy of Sciences}
\email{gzcui@math.ac.cn}
\author{Yan Gao}
\address{Yan Gao, College of Mathematics and Statistics, Shenzhen University, Shenzhen, 518060, P.R.China.}
\email{gyan@scu.edu.cn}
\author{Jinsong Zeng}
\address{Jinsong Zeng, School of Mathematics and Information Science, Guangzhou University, Guangzhou, 510006, P.R. China.}
\email{jinsongzeng@163.com}

\setcounter{tocdepth}{1}
\setcounter{secnumdepth}{2}

\begin{document}

\maketitle
\begin{abstract}
We prove that every postcritically finite rational map $f:\olC\to\olC$ admits an $f^n$-invariant finite and connected graph containing the postcritical set of $f$ as $n$ is large enough.
\end{abstract}
\begin{quote}
\footnotesize{\textsc{Keywords}: rational maps, postcritically finite, Julia sets, invariant graphs.}\end{quote}

\tableofcontents


\section{Introduction}
Let $f$ be a rational map of the Riemann sphere $\wh{\mb{C}}$ with $\deg f>1$. We denote by $\tu{Crit}(f)$ the set of the critical points of $f$. The \emph{postcritical set} of $f$ is defined by 
$$
\tu{Post}(f)=\ol{\bigcup_{n\geq 1}f^n(\tu{Crit}(f))}.
$$
The map $f$ is called \emph{postcritically finite} if $\tu{Post}(f)$ is a finite set. 

For postcritically finite polynomials, Douady-Hubbard have introduced so-called Hubbard trees to capture their dynamical features \cite{DH2}. A long-standing problem is to develop analogous combinatorial invariants for general rational maps \cite[Problem 5.5]{Mc1}.

Here we recall some results on this topic. For quadratic rational maps, this problem was intensively studied in \cite{Ber, Ree1,ST, Wit}. There has also been some progress for Newton's methods; see \cite{DMRS,LMS1,LMS2,WYZ}. In \cite{De,QWY}, the authors constructed invariant cut rays for McMullen maps. Applying invariant graphs, the authors provided a classification of critically fixed rational maps \cite{CGNPP,Hlu}.

The icebreaking general result on this problem is due to Bonk-Meyer \cite{BM} and Cannon-Floyd-Parry \cite{CFP}, where they proved that, any postcritically finite rational map $f$ with empty Fatou set admits an $f^n$-invariant Jordan curve containing $\tu{Post}(f)$ for sufficiently large integer $n>0$. Actually, this result was established in a broader setting of \emph{expanding Thurston maps} \cite[Definition 2.2]{BM}. An open problem is whether a postcritically finite rational map with non-empty Fatou set admits $f^n$-invariant graphs containing its postcritical set (\cite[Problem 3]{BM} and \cite[Question 4.2]{CFP}).  

When $f$ is a \emph{carpet rational map}, i.e., a postcritically finite rational map whose Julia set is a Sierpi\'{n}ski carpet, it was proved in \cite{GHMZ} that for sufficiently large integer $n$, there is an $f^n$-invariant Jordan curve containing $\tu{Post}(f)$. M. Rees provided a similar result but with a different approach in \cite{Ree2}.

It is not hard to check that there are postcritically finite rational maps $f$ that admit no $f^n$-invariant Jordan curves for any integer $n>0$; see Remark \ref{rmk:example}\,(2) for a counter example. Instead of searching for invariant Jordan curves for a general postcritically finite rational map $f$, the main purpose of this article is to construct $f^n$-invariant graphs containing $\tu{Post}(f)$. 
Our main result is as follows.

\begin{theorem}\label{thm:main}
Let $f:\olC\to \olC$ be a postcritically finite rational map with non-empty Fatou set. Let $P\subseteq \wh{\mb{C}}$ be a finite set containing $\tu{Post}(f)$ such that $f(P)\subseteq P$. Then for sufficiently large integer $n>0$, there exists a finite and connected graph $G\subseteq \olC$ containing $P$ such that $f^n(G)\subseteq G$.
\end{theorem}

In the setting of Thurston maps, we believe that Theorem \ref{thm:main} should be true for \emph{B\"{o}ttcher expanding maps}. One may refer to \cite{BD,BD2,FPP} for the definition and related theories on B\"{o}ttcher expanding maps.

Let $G$ be an $f^n$-invariant graph in Theorem \ref{thm:main}. Then the set $S:=G\cup f^{-1}(G)\cup\cdots\cup f^{-n+1}(G)$ is $f$-invariant. But $S$ may not be a finite graph, since it possibly has infinitely many complementary components. The problem of whether there exists an $f$-invariant graph containing $\tu{Post}(f)$ is still open, even for rational expanding Thurston maps and carpet rational maps.

The framework of the proof of Theorem \ref{thm:main} is an analogue of \cite[Theorem 15.1]{BM}. Beginning with an initial graph $G_0$ under certain conditions, we will find a graph $G_1$ in $f^{-n}(G_0)$ for sufficiently large integer $n>0$ such that $G_1$ is close to $G_0$ and isotopic to $G_0$ rel $P$. Such a graph $G_0$ is called \emph{isotopically invariant}. Then by isotopic lifting, we obtain a sequence of graphs $G_k$. Finally, we show that the sequence of graphs $G_k$ converges to a graph $G_\infty$ in the sense of Hausdorff topology. The limit $G:=G_\infty$ is indeed as required.

There are two major differences between the proof of Theorem \ref{thm:main} and that of \cite[Theorem 15.1]{BM}. Firstly, in order to find an isotopically invariant graph, we propose some conditions on the initial graph $G_0$, and then manage to construct graphs satisfying these conditions (see Definition \ref{def:admissible} and Theorem \ref{thm:admissible}]). In contrast, for expanding Thurston maps, it was shown that the initial Jordan curve can be taken arbitrarily.  On the other hand, since the map in our case is not globally expanding, we give a new and direct argument to show that the sequence of the graphs $G_k$ converges to an invariant graph in the sense of Hausdorff topology. 



The paper is organized as follows.
In \S \ref{sec_pre}, we introduce some basic notation and results to be used later on in this paper. In \S \ref{sec_admissible}, we first give the definition of admissible graph and state some results related to admissible graphs. After that, a proof of Theorem \ref{thm:main} is given by assuming Theorems \ref{thm:admissible}, \ref{thm:multi} and \ref{thm:invariant}. In \S \ref{sec_arcs}, we define and investigate two kinds of arcs satisfying the {\it joint condition}: clean arcs and $\Omega$-clean arcs. \S \ref{sec_existence} is devoted to proving Theorem \ref{thm:admissible}, that is, the existence of admissible graphs.
In \S \ref{sec_tiles} we develop the tiles for admissible graphs and then use them to complete the proof of Theorem \ref{thm:multi}. In \S \ref{sec_homotopy}, we show that non-trivial edges of an admissible graph $G$ can be arbitrarily approximated by arcs in $f^{-n}(G)$ for large enough integer $n>0$; see Proposition \ref{prop:epsilon_neighbor}. In \S \ref{sec_invariant}, we first show that the union of trivial edges of an admissible graph is invariant after a small perturbation; see Proposition \ref{prop:new}. Then combining Proposition \ref{prop:epsilon_neighbor} with Propostion \ref{prop:new}, we prove Theorem \ref{thm:invariant}.

\noindent\tb{Notation}: We will use the following notation frequently.

		$\bullet$ $\wh{\mb{C}}$, ${\mb{C}}$, $\mb{R}$ and $\mb{D}$ are the Riemann sphere, the complex plane, the real axis and the unit disk, respectively. We write $\mb{D}(z,r):=\{\xi\in\mb{C}:|\xi-z|<r\}$ and $\mb{D}_{r}:=\mb{D}(0,r)$.
		
		$\bullet$ Let $A$ be a set in $\wh{\mb{C}}$. The closure, the boundary and the interior of $A$ are denoted by $\ol{A}$, $\partial A$ and $\tu{int}(A)$ respectively. We denote by $\tu{Comp}(A)$ the collection of all connected components of $A$. The cardinal number of $A$ is $\#A$.
		
		$\bullet$ Two sets satisfying $A\Subset B$ means that $\ol{A}\subseteq B$.
		
		$\bullet$ By an \emph{arc} we mean a continuous injection $\gamma$ from the closed interval $[0,1]$ into $\wh{\mb{C}}$. We write $\tu{end}(\gamma):=\{\gamma(0),\gamma(1)\}$ and $\tu{int}(\gamma):=\gamma\setminus\tu{end}(\gamma)$. For $z\neq w\in\gamma$, we denote by $]z,w[_\gamma$, $[z,w]_\gamma$ and $[z,w[_\gamma$ (or $]z,w]_\g$) the open, closed and semi-open segments in $\gamma$ between $z$ and $w$, respectively.
		
		$\bullet$ Let $A$ be a non-empty subset of $\wh{\mb{C}}$. The diameter of $A$ is $\tu{diam}\,A:=\tu{sup\,}$$\{\tu{dist\,}(x,y),x\neq y\in A\}$, where $\tu{dist}\,(\cdot,\cdot)$ is the standard spherical metric. Given $\epsilon>0$, the set $\mc{N}(A, \epsilon):=\{z\in\wh{\mb{C}},\tu{dist}\,(z,A)<\epsilon\}$ is an $\epsilon$-neighborhood of $A$.
		
\vskip 0.3cm
\noindent\emph{Acknowledgment.} We sincerely thank the anonymous referees for careful reading of this paper and many valuable suggestions, including providing us a brief proof of Lemma \ref{lemma:intersection_three_component}. The first author was supported by the NSFC under grant No.11688101 and no.12071303 and Key Research Program of Frontier Sciences, CAS, under grant No. QYZDJ-SSW-SYS 005. The second author was supported by the NSFC under grant No.11871354. The third author was supported by the NSFC under grant No.11801106. Jinsong Zeng is the corresponding author.

\section{Preliminaries}\label{sec_pre}
Let $f:\olC\to\olC$ be a rational map with $\deg f>1$. Its \emph{Fatou set} $\mc{F}_f$ is defined to be the set of points such that the sequence of forward iterates $\{f^n\}$ forms a normal family in a neighborhood of the point. The \emph{Julia set} $\mc{J}_f$ is the complement of the Fatou set. Both of them are completely invariant under $f$.

For simplicity, a postcritically finite rational map with non-empty Fatou set is written as PCF in this paper. Here we summarize some results on Fatou and Julia sets which will be used in this paper; refer to \cite{DH2}, \cite{Mc2} and \cite{Mi1} for the proofs.

\begin{lemma}\label{lem:new}
	Let $f$ be a PCF. Then the following statements hold.
	\begin{itemize}
		\item[(1)] The Julia set $\mc{J}_f$ is a locally connected non-trivial continuum with empty interior. Consequently, the boundary of every Fatou domain is locally connected and the number of Fatou domains whose diameters are greater than a given positive number is finite; moreover, for a given Fatou domain $U$, the components of $\olC\setminus\ol{U}$ are Jordan domains with their boundaries in $\partial U$ and diameters tending to zero.
		 \item[(2)] (Sullivan's no wandering domains theorem) Every Fatou domain is eventually periodic under the iterations of $f$.
		 \item[(3)]	There exists a family $\mc{B}_f:=\{(U,\phi_U)\}_{U\in\tu{Comp}(\mc{F}_f)}$, called a \emph{system of B\"{o}ttcher coordinates} for $f$, such that each mapping $\phi_U:U\to \mb{D}$ is conformal and $\phi_U(z)^{\tu{deg}(f|_U)}=\phi_{f(U)}(f(z))\ \forall \,z\in U$. By Carath\'{e}odory's theorem, $\phi^{-1}_U$ can be continuously extended to a surjection $\phi^{-1}_U:\ol{\mb{D}}\to \ol{U}$. We call $c(U):=\phi_U^{-1}(0)$ the \emph{center} of $U$.
		 \item[(4)] The center of every periodic Fatou domain is contained in a superattracting cycle and is therefore postcritical. So there are only finitely many periodic Fatou domains.
	\end{itemize}
\end{lemma}
In general, the system of B\"{o}ttcher coordinates for a PCF is not unique. Once fixed, the pullback of a radial ray, i.e., $$R_U(\theta):=\phi^{-1}_U(\{re^{2\pi\tb{i}\theta}:0\leq r<1\})$$ is called the \emph{(internal) ray} of angle $\theta$ in $U$. According to Lemma \ref{lem:new}\,(3), $R_U(\theta)$ always \emph{lands}, i.e., $\lim\limits_{r\to 1^{-}}\phi^{-1}_U(re^{2\pi \tb{i}\theta})$ exists. By a \emph{closed internal ray} we mean the union of an internal ray and its landing point. 
The circle $\phi_U^{-1}(\{re^{2 \pi\tb{i}\theta},0\leq \theta\leq 1\})$ for an $r\in(0,1)$ is called an \emph{equipotential curve} of $U$.

\begin{lemma}\label{lemma:intersection_three_component}
	Let $U_i, i=1,2,3$, be three distinct Fatou domains. Then the intersection $\bigcap_{i=1,2,3}\partial U_i$ contains at most two points.
\end{lemma}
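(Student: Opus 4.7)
The natural strategy is a proof by contradiction, leveraging the non-planarity of the complete bipartite graph $K_{3,3}$. Suppose for contradiction that there exist three distinct points $p_1,p_2,p_3\in\partial U_1\cap\partial U_2\cap\partial U_3$. I would fix a system of B\"ottcher coordinates as in Lemma \ref{lemma:Dynamics_Components}(4), denote by $c_i:=\phi_{U_i}^{-1}(0)$ the center of $U_i$, and use the Carath\'eodory extension $\phi_{U_i}^{-1}:\overline{\mb{D}}\to\overline{U_i}$ to choose, for each pair $(i,j)\in\{1,2,3\}^2$, an angle $\theta_{i,j}$ with $\phi_{U_i}^{-1}(e^{2\pi i\theta_{i,j}})=p_j$. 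Since $p_1,p_2,p_3$ are distinct, for each fixed $i$ the three angles $\theta_{i,1},\theta_{i,2},\theta_{i,3}$ are distinct; hence the closed internal rays $\gamma_{i,j}:=\overline{R_{U_i}(\theta_{i,j})}$ are Jordan arcs from $c_i$ to $p_j$ whose pairwise intersections (for fixed $i$, variable $j$) are exactly $\{c_i\}$, and whose only point of $\partial U_i$ is $p_j$.

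The key step is to verify that $\Gamma:=\bigcup_{i,j}\gamma_{i,j}$ is a topological embedding of $K_{3,3}$ into $\wh{\mb{C}}$, with vertex classes $\{c_1,c_2,c_3\}$ and $\{p_1,p_2,p_3\}$. For arcs in distinct Fatou domains $U_i\neq U_k$, the inclusions $\partial U_i\subseteq\mc{J}_f$ and $U_k\subseteq\mc{F}_f$ yield $\overline{U_i}\cap U_k=\emptyset$, and hence $\overline{U_i}\cap\overline{U_k}=\partial U_i\cap\partial U_k$. Combined with the observation that $\gamma_{i,j}\cap\partial U_i=\{p_j\}$ and $\gamma_{k,l}\cap\partial U_k=\{p_l\}$, this forces $\gamma_{i,j}\cap\gamma_{k,l}=\{p_j\}$ when $j=l$ and $\emptyset$ otherwise---precisely the incidence pattern of $K_{3,3}$.

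The conclusion is then immediate: $K_{3,3}$ is non-planar by Kuratowski's theorem, yet $\Gamma$ realizes it as a subset of $\wh{\mb{C}}\cong S^2$, a contradiction. The only delicate point is the intersection analysis of the second step; the existence of rays landing at prescribed boundary points is automatic from Carath\'eodory's theorem, and the non-planarity of $K_{3,3}$ is classical. Note that the bound $2$ is sharp, since $K_{2,3}$ is planar and so no obstruction arises from two common boundary points, consistent with the configuration illustrated in the referenced figure.
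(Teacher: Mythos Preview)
Your proof is correct and takes a genuinely different route from the paper's. The paper argues directly with the complement structure: it forms the ``hulls'' $\wh{U}_i=\wh{\mb{C}}\setminus\Omega_i$ (where $\Omega_i$ is the component of $\wh{\mb{C}}\setminus\ol{U_i}$ containing the other domain), splits into cases according to whether $U_3$ lies inside one of $\wh{U}_1,\wh{U}_2$ or outside both, and in each case bounds the triple intersection by the endpoints of certain boundary arcs of a Jordan domain. Your argument instead builds, from three hypothetical common boundary points, a topological $K_{3,3}$ by joining each center $c_i$ to each $p_j$ via a closed internal ray, and invokes Kuratowski's theorem.

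Your approach is cleaner and more conceptual: the intersection analysis of the nine arcs is transparent (rays in the same $U_i$ meet only at $c_i$; rays in distinct domains can meet only at their landing points since $\ol{U_i}\cap\ol{U_k}\subseteq\partial U_i\cap\partial U_k$), and the sharpness of the bound $2$ is explained by the planarity of $K_{2,3}$. The paper's case analysis, by contrast, extracts more explicit information about \emph{where} the common points sit relative to the components of $\wh{\mb{C}}\setminus\ol{U_i}$, which is closer in spirit to later arguments in the paper that repeatedly manipulate these complementary Jordan domains $\Omega$. Both proofs rely on exactly the same input from Lemma~\ref{lemma:Dynamics_Components} (simple connectivity of Fatou domains and local connectivity of their complements, hence Carath\'eodory extension), so neither is more elementary in terms of prerequisites.
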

\begin{proof}
	Suppose that $\partial U_1\cap\partial U_2$ contains three distinct points $z_1, z_2$ and $z_3$. Let $R_1, R_2$ and $R_3$ (resp. $R_1', R_2'$ and $R_3'$) be internal rays in $U_1$ (resp. $U_2$), which land at $z_1, z_2$ and $z_3$. The complement of $R_1\cup R_2\cup R_3\cup R_1'\cup R_2'\cup R_3'\cup\{z_1,z_2,z_3\}$ consists of three Jordan domains. One of these Jordan domains must contain $U_3$. So $\partial U_3$ can contain at most two of the points $z_1, z_2$ and $z_3$. The proof is complete.
\end{proof}
\subsection{Accessible points}
Let $f$ be a PCF and $z$ be a point in $\mc{J}_f$. If the point $z$ is disjoint from the boundary of every Fatou domain, then $z$ is called \emph{buried}. Otherwise, it lies in the boundary of a Fatou domain and thus it is the landing point of an internal ray by Lemma \ref{lem:new}\,(3). In this case, the point $z$ is called \emph{accessible}.
We introduce the following notation.

\begin{itemize}
	\item[ ] $\tu{Dom}(z):$ the family of all Fatou domains whose boundaries contain the point $z$;
	\item[ ]$\tu{Acc}(z):$ the family of all internal rays landing at the point $z$;
	\item[ ]  $\tu{Acc}(z,U):$ the family of all internal rays within a Fatou domain $U$ landing at $z$.
\end{itemize}

In particular, the point $z$ is called \emph{multi-accessible} if $\#\tu{Acc}(z)\geq 3$.
For simplicity, we write $f(\mc{S}):=\{f(S): S\in\mc{S}\}$ when $\mc{S}$ is a family of subsets of $\olC$.

\begin{lemma}\label{lema:facts}
	Let $f$ be a PCF and $z$ be an accessible point. Then
	\begin{itemize}
		\item[(1)] $f(\tu{Dom}(z))=\tu{Dom}(f(z))$ and so $\#\tu{Dom}(z)\geq \#\tu{Dom}(f(z))$;
		\item [(2)] $f(\tu{Acc}(z,U))\subseteq\tu{Acc}(f(z),f(U))$;
		\item[(3)]$f(\tu{Acc}(z))=\tu{Acc}(f(z))$; 
	\item[(4)]Let $U$ be a Fatou domain and $K_U:=\wh{\mb{C}}\setminus U$. If $z\in\partial U$, then we have $$\#\tu{Comp}(K_U\setminus\{z\})=\#\tu{Acc}(z,U);$$
	\item[(5)] $\#\tu{Dom}(z)\leq \#\tu{Acc}(z)<\infty$;
	\item[(6)] If the point $z$ is periodic, then every domain in $\tu{Dom}(z)$ is periodic.
	\end{itemize}
\end{lemma}

\begin{proof}
	Statement (1) follows from the fact that the Fatou set is completely invariant. Statement (2) holds directly by definition.
For each internal ray $R$ landing at $f(z)$, there are exactly $\delta_z$ internal rays in $f^{-1}(R)$ terminating at $z$, where $\delta_z$ is the local degree of $f$ at $z$. Thus statement (3) follows. For the proof of statement (4) we refer to \cite[Theorem 6.6 p.85]{Mc3}.

For statement (5), we argue by contradiction and assume $\#\tu{Acc}(z)=\infty$. Note that each ray in $\tu{Acc}(z)$ eventually falls into a periodic Fatou domain by Lemma \ref{lem:new}\,(2). Since periodic Fatou domains are finitely many, then for an arbitrarily large integer $N$ there exists a Fatou domain $U_0$ of period $p\geq 1$ and an integer $n_0\geq 1$ such that $f^{n_0}(z)\in \partial U_0$ and $\#\tu{Acc}(f^{n_0}(z), U_0)\geq N$. Since $f$ is postcritically finite, the point $f^n(z)$ is not critical for each $n$.

If the point $z$ is eventually periodic, according to \cite[Lemma 2.3]{FPP}, we obtain a finite number $$N_0:=\tu{max}\,\{\#\tu{Acc}(f^i(z), U): i\geq 0\tu{ and } U\tu{ is a periodic Fatou domain.}\}$$
Thus $N\leq N_0$. It is a contradiction.

Otherwise, the point $z$ is wandering, i.e., $f^i(z)\neq f^j(z)$ for all $i\neq j\geq 0$. By Lemma \ref{lem:new}\,(3) the circle map $P_d: z\mapsto z^d$ on $\partial \mb{D}$ provides a combinatorial model for the map $f^p:\ol{U_0}\to\ol{U_0}$, where $d:=\tu{deg}(f^p|_{U_0})$.
We choose $N$ rays in $\tu{Acc}(f^{n_0}(z), U_0)$. Their union is denoted by $S_0$. 

Then $S_i:=f^{ip}(S_0)$ is a union of $N$ rays in $\tu{Acc}(f^{ip+n_0}(z), U_0)$ satisfying that $$\ol{S_i}\cap \ol{S_j}=\{c(U_0)\}$$ for every $i\neq j$. For each $i$, the closure of the image $\phi_{U_0}(S_i)$ meets $\partial \mb{D}$ in $N$ points. Let $T_i$ be the union of these points. Then we have $T_i=P_d^{i}(T_0)$, $\# T_i=N$ and for every $i\neq j$ the two sets $T_i$ and $T_j$ are unlinked. Thus $T_0$ forms a \emph{wandering polygon} for $P_d$; see \cite[p.28]{C}. According to \cite[Theorem 1.3]{C}, it follows that $N\leq d-1$. Again, this is a contradiction. 

Statement (6) is a consequence of statements (1) and (5). The proof of the lemma is complete.
\end{proof}

\subsection{Shrinking lemmas}

Let $f$ be a PCF. For any such map $f$, there exists an \emph{orbifold metric} $d_{\mc{O}}$ on $\mc{O}:=\wh{\mb{C}}\sm(\tu{Post}(f)\cap\mc{F}_f)$,
such that for any $w\in\mc{O}$ and $z\in f^{-1}(w)$, it satisfies $||f'(z)||_\mc{O}>1$; see \cite[Section 19]{Mi1} as well as \cite[Appendix A.10]{BM}.
	
Now we fix a sufficiently small closed topological disk $B_z$ containing $z$ such that $f(B_z)\Subset B_{f(z)}$, for any postcritical point $z$ of \emph{Fatou-type}, i.e., $z\in\mc{F}_f$. Let $V$ be the complement of the union of these disks. Thus, we have $f^{-1}(V)\Subset V\Subset\mc{O}$, and there exists a constant $\lambda>1$ such that $||f'(z)||_\mc{O}\geq \lambda$ for all $z\in f^{-1}(V)$. Moreover, we have the following locally shrinking lemma; see \cite[Lemma 2.3]{GHMZ}.

\begin{lemma}[Locally shrinking]\label{lemma:shrinking_lemma_locally}
	Let $f$ be a PCF, and $V$ be a domain as defined above. Then there are constants $\delta>0$ and $C>0$ such that for any $n\geq 1$, any compact set $E\subseteq V$ with $\tu{diam}(E)\leq \delta$, and any connected component $E_n$ of $f^{-n}(E)$, we have $$\tu{diam}_{\mc{O}}E_n\leq C\tu{diam}_{\mc{O}}E/\lambda^n.$$
\end{lemma}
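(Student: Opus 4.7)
The plan is to leverage the uniform orbifold expansion on $f^{-n}(V)$: since $f^{-1}(V) \Subset V$ and $\|f'\|_\mc{O} \geq \lambda > 1$ on $f^{-1}(V)$, the chain rule together with induction yields $f^{-n}(V) \Subset V$ and $\|(f^n)'(z)\|_\mc{O} \geq \lambda^n$ for every $z \in f^{-n}(V)$. The task is then to convert this pointwise expansion into a uniform diameter bound on the connected components of $f^{-n}(E)$.

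First, I would fix $\delta$ using a Lebesgue-number argument on $\ov V$. Since $\ov V$ is compact in $\mc{O}$, where the orbifold metric is smooth away from postcritical points of Fatou type, a finite cover of $\ov V$ by small geodesically convex orbifold balls yields constants $\delta>0$ and $C_0>0$ such that every compact $E \subseteq V$ with $\tu{diam}_\mc{O}(E) \leq \delta$ is contained in a Jordan disk $\Omega \Subset V$ whose \emph{intrinsic} orbifold diameter is at most $C_0\,\tu{diam}_\mc{O}(E)$. Any connected component $E_n$ of $f^{-n}(E)$ then lies inside a unique connected component $\widetilde{\Omega}_n$ of $f^{-n}(\Omega) \subseteq f^{-n}(V) \Subset V$.

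Second, I would view $f^n \colon \widetilde{\Omega}_n \to \Omega$ as a proper holomorphic branched cover (equivalently, an orbifold cover), which by the expansion bound is $\lambda^{-n}$-contracting on inverse branches. For any $z, w \in E_n$, lifting an intrinsic orbifold geodesic in $\Omega$ from $f^n(z)$ to $f^n(w)$ to a path in $\widetilde{\Omega}_n$ starting at $z$ produces a path of orbifold length at most $\lambda^{-n}\cdot C_0\,\tu{diam}_\mc{O}(E)$. Provided the lift terminates at $w$, this gives $d_\mc{O}(z,w) \leq C_0\lambda^{-n}\,\tu{diam}_\mc{O}(E)$; supremizing over $z,w \in E_n$ and setting $C := C_0$ finishes the argument.

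The main obstacle is ensuring that the lifted geodesic lands at the correct endpoint. The difficulty arises because $\Omega$ may contain Julia-type postcritical points of $f$, at which $f^n$ ramifies genuinely, so $f^n \colon \widetilde{\Omega}_n \to \Omega$ is branched. To address this, I would further refine $\delta$ so that $\Omega$ contains at most one such postcritical point and is simply connected as an orbifold; the orbifold path-lifting theorem then provides a unique lift of any path through any prescribed starting point, while the orbifold metric (designed precisely so that local degrees compensate for ramification) ensures the length contraction remains $\lambda^n$ across the ramification locus. A secondary point to verify is that the geodesically convex neighborhoods used in Step~1 stay inside $V$, which is why $\delta$ must be taken small relative to $\tu{dist}_\mc{O}(\ov V,\partial\mc{O})$.
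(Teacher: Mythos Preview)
The paper does not supply a proof; it simply cites the result as \cite[Lemma 2.3]{GHMZ}. Your sketch follows the natural route (uniform orbifold expansion on $f^{-n}(V)$ plus a path-lifting argument) and is close to a correct proof, but there is a genuine gap at the lifting step.

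A topological disk $\Omega$ equipped with a single cone point of order $\nu>1$ is \emph{not} simply connected as an orbifold: its orbifold fundamental group is $\Z/\nu\Z$. Hence, even granting that $f^n\colon\widetilde\Omega_n\to\Omega$ is an orbifold covering, it can have degree $d>1$. Orbifold path-lifting then produces a unique lift of the geodesic $\beta$ from $f^n(z)$ to $f^n(w)$ starting at $z$, but nothing forces this lift to terminate at $w$; it may land at any of the $d$ preimages of $f^n(w)$ in $\widetilde\Omega_n$. Your sentence ``the orbifold path-lifting theorem then provides a unique lift'' is true, yet it does not resolve the endpoint problem you yourself flagged.

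The standard repair uses that $d$ is uniformly bounded, say by $D:=\max\{\nu(p):p\in\tu{Post}(f)\cap\mc J_f\}<\infty$. The monodromy of $\widetilde\Omega_n\setminus\{\tilde p\}\to\Omega\setminus\{p\}$ is cyclic of order $d\le D$, so for some $0\le k<d$ the lift of $\beta*\gamma_p^{\,k}$ (the geodesic followed by $k$ small loops about $p$) does end at $w$. Choosing these loops of orbifold length $<\varepsilon$ and applying the $\lambda^{-n}$ contraction gives
\[
d_{\mc O}(z,w)\le \lambda^{-n}\bigl(C_0\,\tu{diam}_{\mc O}(E)+D\varepsilon\bigr);
\]
letting $\varepsilon\downarrow 0$ yields the desired bound with $C=C_0$. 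An equivalent route is to pass to the universal orbifold cover of $\mc O$, where $f^n$ lifts to an unbranched map with the same expansion factor and the endpoint issue disappears.
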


We state here the well-known lemma; one may refer to \cite[Section 12.1]{LM} for the proof.
\begin{lemma}[Globally shrinking]\label{lemma:shrinking_lemma}
	Let $f$ be a PCF. Let $W\subseteq\wh{\mb{C}}$ be a domain such that for each $n\geq 1$ and $W_n\in\tu{Comp}(f^{-n}(W))$ the degree of the mapping $f^n:W_n\to W$
	is bounded above by a constant. Then
	$$\tu{max}\,\left\{\tu{diam}\,E_n, E_n\in\tu{Comp}\,(f^{-n}(E))\right\}\to 0\tu{ as }n\to\infty$$
	for any set $E$ compactly contained in $W$.
\end{lemma}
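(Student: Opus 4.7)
My plan is to argue by contradiction, combining Hausdorff compactness with two separate exponential shrinking estimates: the orbifold expansion of Lemma~\ref{lemma:shrinking_lemma_locally} on the region $V\Subset\mc{O}$, and a B\"ottcher-coordinate analysis near the finite set $\tu{Post}_{\mc{F}}(f)$.

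First, I would suppose the conclusion fails: there exist $E\Subset W$, $\epsilon>0$, $n_k\to\infty$ and components $E_{n_k}\in\tu{Comp}(f^{-n_k}E)$ with $\tu{diam}(E_{n_k})\geq\epsilon$. Let $W_{n_k}\in\tu{Comp}(f^{-n_k}W)$ be the component containing $E_{n_k}$, so by hypothesis $\deg(f^{n_k}|_{W_{n_k}})\leq N$ for some uniform $N$. (The case $W=\wh{\mb{C}}$ is ruled out by the hypothesis since $\deg f^n=(\deg f)^n\to\infty$, so $W\subsetneq\wh{\mb{C}}$.) Then I would cover $E$ by finitely many open pieces $U_1,\dots,U_m$ of spherical diameter at most the constant $\delta>0$ of Lemma~\ref{lemma:shrinking_lemma_locally}, arranging that each $U_i$ either lies in $V$, or is contained in one of the small closed disks $D_z$ of Section~\ref{sec_pre} around a point of $\tu{Post}_{\mc{F}}(f)$; such a covering exists because $V$ together with the disks $D_z$ covers the sphere.

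The key claim is that every component of $f^{-n_k}U_i$ meeting $W_{n_k}$ has spherical diameter tending to zero as $k\to\infty$. If $U_i\subset V$ this is exactly Lemma~\ref{lemma:shrinking_lemma_locally}, since $f^{-n_k}V\subseteq V$ and $d_{\mc{O}}$ is comparable to the spherical metric on compact subsets of $V$. If $U_i$ lies in $D_z$ for some $p\in\tu{Post}_{\mc{F}}(f)$, then $p$ eventually lands on a superattracting periodic cycle whose B\"ottcher coordinates (Lemma~\ref{lemma:Dynamics_Components}(4)) conjugate the return map to $w\mapsto w^{d_p}$; the bounded-degree hypothesis $\deg(f^{n_k}|_{W_{n_k}})\leq N$ together with this normal form forces preimage components within the chart to shrink (indeed double-exponentially) in the B\"ottcher coordinate, and hence in the spherical metric, while any preimage components that escape the chart are handled by the first case after a bounded number of steps.

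The last step is combinatorial: since each $E_{n_k}$ is connected and $f^{n_k}(E_{n_k})=E$ is covered by $\{U_i\}_{i=1}^m$, the set $E_{n_k}$ is covered by components of $f^{-n_k}U_i$ lying in $W_{n_k}$; by the bounded-degree hypothesis, for each $i$ there are at most $N$ such components, hence at most $Nm$ pieces altogether, each of spherical diameter tending to zero. Summing yields $\tu{diam}(E_{n_k})\to 0$, contradicting $\tu{diam}(E_{n_k})\geq\epsilon$. The main technical obstacle, as I see it, is the B\"ottcher analysis in the second subcase of the key claim, in particular tracking the transitions of preimage components between B\"ottcher charts around distinct Fatou-type postcritical points, and producing a shrinking bound uniform in $k$.
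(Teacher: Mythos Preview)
The paper does not give its own proof of this lemma; it simply cites \cite[Section~12.1]{LM}. The argument there (and the standard ones in the literature) proceeds quite differently from yours: one works directly with the hyperbolic metric on $\wh{\mb{C}}\setminus\tu{Post}(f)$, or with a normal-family/Koebe-distortion argument applied to the bounded-degree branched covers $f^{n}:W_n\to W$, and never decomposes $E$ according to proximity to $\tu{Post}_{\mc{F}}(f)$. Your route---reducing to Lemma~\ref{lemma:shrinking_lemma_locally} on $V$ and a separate analysis near Fatou-type postcritical points---is more hands-on and has the merit of using only the tools already set up in the paper.

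Your outline is sound, but the sentence ``the bounded-degree hypothesis \dots\ together with this normal form forces preimage components within the chart to shrink \dots\ while any preimage components that escape the chart are handled by the first case after a bounded number of steps'' is where the real content lies, and as you yourself note, it is not yet an argument. Here is the missing observation. Suppose $U_i\subset D_z$ and $Q$ is a component of $f^{-n}(U_i)$ with $\deg(f^n|_Q)\le N$ which is \emph{not} contained in $V$. Then $Q$ meets some $D_{z'}$, hence lies in the Fatou domain $U_{z'}$ containing $z'\in\tu{Post}_{\mc{F}}(f)$; in B\"ottcher coordinates $f^{-n}(U_i)\cap U_{z'}$ is a single disk about $z'$, so $Q\ni z'$ and $\deg(f^n|_Q)=\deg(f^n|_{U_{z'}})$. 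Since every periodic Fatou cycle carries a critical point, this degree picks up a factor $\ge 2$ each time the orbit of $U_{z'}$ completes a cycle, forcing $n\le n_0=n_0(N,f)$. Thus for $n>n_0$ every bounded-degree component of $f^{-n}(U_i)$ lies in $V$. Now split $n_k=(n_k-n_0-1)+(n_0+1)$: the finitely many bounded-degree components of $f^{-(n_0+1)}(U_i)$ are compact in $V$; cover each by finitely many $\delta$-balls in $V$ and apply Lemma~\ref{lemma:shrinking_lemma_locally} to those balls under $f^{-(n_k-n_0-1)}$. Your final counting step (at most $N$ components per $U_i$ in $W_{n_k}$) then gives the contradiction. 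A minor point: $V$ and the closed disks $D_z$ do not overlap, so to get an open cover of $E$ by sets each contained in $V$ or in some $D_z$ you should first slightly enlarge the $D_z$ (preserving $f(D_z)\Subset D_{f(z)}$).
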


\subsection{Graphs on the sphere}\label{def:graph}

	In this paper, by a \emph{graph} we mean a connected and finite graph. Precisely, a graph $G$ is a continuum in $\wh{\mb{C}}$, which can be written as a union of finitely many arcs, called \emph{edges}, with their interiors mutually disjoint. Endpoints of edges are called \emph{vertices} of $G$. We sometimes denote a graph by $G=(\mc{V},\mc{E})$, where $\mc{V}$ and $\mc{E}$ are the collections of vertices and edges, respectively.
		
	Let $G=(\mc{V}, \mc{E})$ be a graph. The \emph{valence} of a vertex $v$ in $\mc{V}$, written as $\tu{valence}(v,G)$, is the number of edges in $\mc{E}$ containing $v$. The \emph{valence} of a point $x\in G\setminus\mc{V}$ is defined as $\tu{valence}(x, G):=2$. A point in $G$ with valence equal to $1$ (resp. greater than 2) is called an \emph{endpoint} (resp. \emph{branch point}) of $G$. The graph $G$ is a \emph{tree} if its complement is connected.
	
	A graph has many different choices for the set of vertices. When not stated obviously, we always take $\mc{V}$ to be the minimal vertex set, the set consisting of endpoints and branch points of $G$. Moreover, we require the edges considered in this paper having distinct endpoints.
	
	A point $x\in G$ is called a \emph{cut} (resp. \emph{non-cut}) point of a graph $G$, provided that the set $G\sm\{x\}$ is disconnected (resp. connected). Obviously,
	$G$ has no cut points $\Leftrightarrow$ $\tu{Comp}(\wh{\mb{C}}\setminus G)$ consists of Jordan domains.

\section{Admissible graphs and proof of Theorem \ref{thm:main}}\label{sec_admissible}
In this section we introduce the notion of admissible graphs,  and prove Theorem \ref{thm:main} by assuming Theorem \ref{thm:admissible}, Theorem \ref{thm:multi} and Theorem \ref{thm:invariant}. Their proofs will be established in \S \ref{sec_existence}, \S \ref{sec_tiles} and \S \ref{sec_invariant}, respectively.

Let $f$ be a PCF and $z$ be an accessible point in $\mc{J}_f$.
Let $B_z$ be a small open disk around $z$ such that the intersection of $\ol{B}_z$ with each ray in $\tu{Acc}(z)$ is an arc terminating at $z$. The rays in $\tu{Acc}(z)$ divide $B_z\sm\{z\}$ into $\#\tu{Acc}(z)$ components, each of which is called an \emph{entrance} of the point $z$ associated to $B_z$. %
By definition, an entrance $E$ is an open set bounded by three arcs:
two are segments from the rays in $\tu{Acc}(z)$, say $R_1$ and $R_2$;
the remaining one is from $\partial B_z$.
The rays $R_1$ and $R_2$ are said to be \emph{adjacent} associated to $E$. The trivial case is that
$$R_1=R_2\Leftrightarrow \tu{Acc}(z)=\{R_1\}\Leftrightarrow\#\tu{Acc}(z)=1.$$
An entrance is called \emph{special} if its associated adjacent rays $R_1$ and $R_2$ (allowing $R_1=R_2$) are from a common Fatou domain; otherwise, it is called \emph{ordinary}.

	\tu{A graph $G=(\mc{V}, \mc{E})$ is called \emph{regulated} if the intersection of $G$ with the Fatou set is a union of internal rays. Obviously, each endpoint or branch point of a regulated graph is either a Fatou center or a point in $\mc{J}_f$. For simplicity, we always require that $\mc{V}\cap \mc{F}_f$ consists of Fatou centers for regulated graphs.}

\begin{definition}[Joint conditions]\label{def:joint}
	\label{def:obs}	\tu{Let $G=(\mc{V}, \mc{E})$ be a regulated graph and $z$ be an accessible point in $G$. Let $B_z$ be a small open disk around $z$ such that the components of $B_z\cap R$ with $R\in\tu{Acc}(z)$ and $B_z\cap G\setminus\{z\}$ are disjoint open arcs incident to the point $z$. 	
	Then $G$ is said to satisfy the \emph{joint condition} at the point $z$, provided that every entrance $E$ associated to $B_z$ intersects at most one component of $B_z\cap G\setminus\{z\}$ and the intersection is non-empty only if $E$ is special; see Figure \ref{fig:link-condition}.
	We say that $G$ satisfies the \emph{joint condition} if
\begin{itemize}
	\item[(1)] the joint condition holds at each accessible point in $G$, and
	\item[(2)] every buried point in $\mc{V}\cap \mc{J}_f$ is an endpoint.
\end{itemize}}
\end{definition}
\begin{figure}[h]
	\centering
	\begin{tikzpicture}
	\node at (0,0){\includegraphics[width=0.83\linewidth]{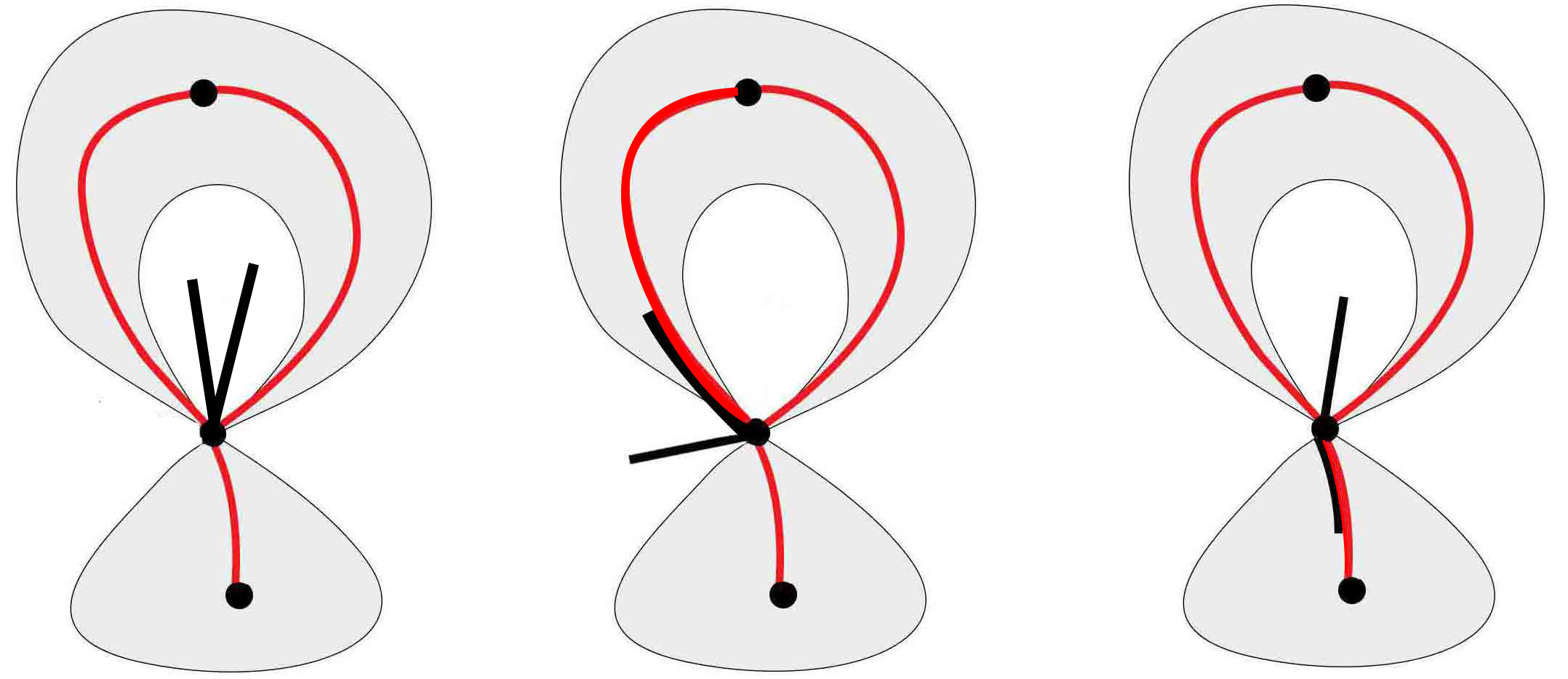}};
	\node at (-4.4,-0.83) {$z_1$};
	\node at (0.25,-0.83) {$z_2$};
	\node at (5.1,-0.83) {$z_3$};
	\end{tikzpicture}
	\caption{The joint condition at points. The three points $z_i, i\in\{1,2,3\}$ with $\#\tu{Acc}(z_i)=3$ lie in a regulated graph $G$. The red curves are $\tu{Acc}(z_i)$. The three black arcs are neighborhoods of $z_i$ in $G$. The figure shows that $G$ fails the joint condition at $z_1$ and $z_2$.}
	\label{fig:link-condition}
\end{figure}

\begin{remark}
		\tu{(1) The joint condition at an accessible point $z\in G$ is equivalent to saying that each component of $B_z\cap G\setminus\{z\}$ tends to the point $z$ along either an internal ray or a special entrance, and a special entrance receives at most one edge; see Figure \ref{fig:link-condition}. Since there are at most $\#\tu{Acc}(z)$ special entrances at the point $z$, it follows that $\tu{valence}(z, G)\leq 2\,\#\tu{Acc}(z)$.}
	
	\tu{(2) Let $\gamma$ be a Jordan curve containing $\tu{Post}(f)$ such that $\gamma\cap\mc{F}_f$ is a union of internal rays. Then $\gamma=(\tu{Post}(f), \mc{E})$ is a regulated graph. But it fails the joint condition if $\tu{Post}(f)$ contains a buried point in $\mc{J}_f$.}
\end{remark}

\begin{definition}[Admissible graphs]\label{def:admissible}
	\tu{Let $P$ be a finite set in the union of Fatou centers and $\mc{J}_f$ such that $\tu{Post}(f)\subseteq P$. A graph $G=(\mc{V}, \mc{E})$ is called \emph{admissible} with respect to $(f, P)$ if the following conditions are satisfied.
	\begin{itemize}
		\item[(C.1)]  $G$ is regulated.
		 \item[(C.2)] All periodic Fatou centers are non-cut points of $G$.
		 \item[(C.3)] $G$ satisfies the joint condition.
		 \item[(C.4)]  $P\subseteq \mc{V}$ and the \emph{Julia-type} vertices, i.e., vertices in $\mc{J}_f$, of $\mc{V}\setminus P$ are multi-accessible.
		 \item[(C.5)]  All rays in $\tu{Acc}(v)$ with $v\in \mc{V}\cap \mc{J}_f$ periodic are contained in $G$.
	\end{itemize}}

\end{definition}
\begin{remark}
	\tu{Condition (C.2) is used crucially in \S \ref{sec_tiles} to construct tiles of graphs. For the existence of isotopically invariant graphs we need condition (C.3).
	The necessity of conditions (C.4) and (C.5) is due to Theorems \ref{thm:multi} and \ref{thm:invariant}.}
\end{remark}
\begin{theorem}[Existence of admissible graphs]\label{thm:admissible}
	Let $P$ be a finite set in the union of Fatou centers and $\mc{J}_f$ such that $\tu{Post}(f)\subseteq P$. Then there exists a graph $G=(\mc{V},\mc{E})$ that is admissible with respect to $(f, P)$.
\end{theorem}

\begin{theorem}\label{thm:multi}Every multi-accessible point is eventually periodic and there are at most finitely many cycles of multi-accessible points for every PCF.
\end{theorem}
A subset $S$ of $\olC$ is said to be \emph{invariant} or precisely \emph{$f^n$-invariant} if $f^n(S)\subseteq S$ for an integer $n>0$. Let $P_0$ be the union of
\begin{itemize}
	\item $\tu{Post}(f)$,
	\item the landing points of $R_U(0)$ for all periodic Fatou domains $U$, and
	\item the periodic multi-accessible points, which are finitely many by Theorem \ref{thm:multi}.
\end{itemize}
Obvisouly, $P_0$ is a finite and $f$-invariant set.
\begin{theorem}\label{thm:invariant}
	Let $P$ be a finite and $f$-invariant set containing $P_0$. Let $G=(\mc{V}, \mc{E})$ be an admissible graph with respect to $(f, P)$. Then for any $\epsilon>0$ and for each sufficiently large integer $n$, there exists an $f^n$-invariant graph $G_\infty=(\mc{V}_\infty,\mc{E}_\infty)$ in the $\epsilon$-neighborhood of $G$, such that $\mc{V}_\infty=\mc{V}$ and $G_\infty$ is isotopic to $G$ rel. $\mc{V}$.
\end{theorem}
Two graphs $G_i=(\mc{V}_i, \mc{E}_i), i=0,1$ with $\mc{V}_0=\mc{V}_1=\mc{V}$ are said to be \emph{isotopic} rel. $\mc{V}$ if there is an isotopy $H:[0,1]\times \olC\to \olC$ rel. $\mc{V}$ such that $H(0, \cdot)=\tu{id}$ and $H(1, G_0)=G_1$.
\begin{proof}[Proof of Theorem \ref{thm:main}]
	By adding finitely many points into the set $P$ if necessary, we may assume $P_0\subseteq P$. By Theorem \ref{thm:admissible}, there is an admissible graph $G_0=(\mc{V}_0, \mc{E}_0)$ with respect to $(f, P)$. According to Theorem \ref{thm:invariant}, given $\epsilon>0$, one can turn $G_0$ into an $f^n$-invariant graph $G$ in the $\epsilon$-neighborhood of $G_0$ for each sufficiently large integer $n$. The graph $G$ has the same vertex set as $G_0$. The proof of Theorem \ref{thm:main} is complete.
\end{proof}
\begin{remark}\label{rmk:example}
	\tu{
	(1) If the set $P$ contains a buried point $z$ in $\mc{J}_f$, then $z$ is an endpoint of the graph $G$ in Theorem \ref{thm:main} by the joint condition for admissible graphs. In contrast, the invariant graphs obtained in \cite[Theorem 1.2]{GHMZ} are always Jordan curves for carpet rational maps.}
	
	\tu{(2) For some PCFs, we cannot find an $f^n$-invariant Jordan curve containing $\tu{Post}(f)$ for any integer $n>0$. To see this, consider the example: $f(z)=z^2+\bf{i}$. Note that $\tu{Post}(f)=\{\tb{i}, -1+\tb{i}, -\tb{i}, \infty\}$ and $\mc{J}_f$ is a dendrite.
	Suppose $\gamma\supset \tu{Post}(f)$ is an $f^n$-invariant Jordan curve. Let $\alpha$ be the component of $\gamma\cap B$ containing $\infty$, where $B$ is the basin of infinity of $f$. Since $\infty$ is superattracting, we have $f^n(\alpha)\subseteq\alpha$ and $\gamma\cap B=\alpha$. Thus the arc $\wt{\alpha}:=\gamma\setminus\alpha$ is contained in $\mc{J}_f$ and it joins the other three postcritical points. This is impossible, as the three points belong to distinct components of $\mc{J}_f\setminus\{\xi\}$ with $\xi$ an $f$-fixed point.}
\end{remark}

\section{Arcs with the joint condition}\label{sec_arcs}
In the polynomial case,  Douady-Hubbard \cite{DH2} introduced a kind of canonical arcs in the filled Julia set, called \emph{DH-regulated arcs}, with the strong property that two points in the filled Julia set can be joined by a unique DH-regulated arc. 
 As a generalization of DH-regulated arcs, in this section we shall introduce two kinds of regulated arcs for PCF: clean arcs and $\Omega$-clean arcs. Both of them satisfy the joint condition given in Definition \ref{def:joint}. 

\subsection{The classification of accessible points}
Let $f$ be a PCF. Let $U$ be a Fatou domain and $z\in\partial U$ be an accessible point.
Recall that $\tu{Dom}(z)$ denotes the family of all Fatou domains whose boundaries contain the point $z$, and $\tu{Acc}(z,U)$ represents the family of internal rays within $U$ terminating at the point $z$.

The point $z$ is called \emph{semi-buried} if it is disjoint from the boundary of any component of $\tu{int}(K_U)$ with $K_U:=\olC\sm U$.
In this case, $\tu{Dom}(z)=\{U\}$. In the following form, we denote by $\Omega$ (resp. $\Omega'$) the component of $\tu{int}(K_U)$ (resp. $\tu{int}(K_{U'})$) containing $U'$ (resp. $U$).
The classification of accessible points is stated as follows.
\begin{center}
	\begin{tabular}{|l|l|p{5cm}|}
		\hline
		\multirow{2}{*}{type 1: $\#\tu{Dom}(z)=\#\tu{Acc}(z)=1$.} & {type 1a: $z$ is not semi-buried.}
		\\
		\cline{2-2}  &{type 1b: $z$ is semi-buried.}
		\\
		\hline		
		\multirow{2}{6cm}{type 2: $\#\tu{Dom}(z)=\#\tu{Acc}(z)=2$,
			$\textcolor{white}{aaaaaaaa}\tu{Dom}(z):=\{U,U'\}$.}    & {type 2a: $\{z\}=\bigcap_{n}\,]x_n,y_n[\,_{\partial \Omega}$ with $x_n,y_n\in\partial\Omega\cap\partial\Omega'$.}
		\\
		\cline{2-2}    &   {type 2b: $z$ is not of type 2a.}
		\\
		\hline
		\multicolumn{2}{|l|}  {type 3: $\#\tu{Dom}(z)=\#\tu{Acc}(z)\geq3$.}
		\\
		\hline
		\multirow{2}{6cm}{type 4: $\#\tu{Dom}(z)<\#\tu{Acc}(z)$.} &{type 4a: $z$ is not semi-buried. 
		}
		\\
		\cline{2-2}   &{type 4b: $z$ is semi-buried, thus $\tu{Dom}(z)=\{U\}$.}
		\\
		\hline
		
	\end{tabular}
\end{center}
\begin{figure}[h]
	\centering
	\begin{tikzpicture}
	\node at (0,0){\includegraphics[width=0.7\linewidth]{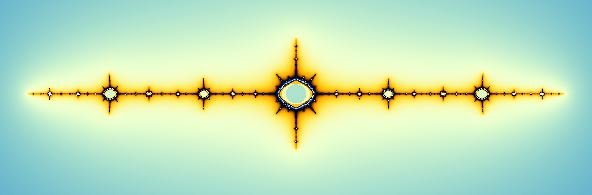}};
	\end{tikzpicture}
	\caption{Airplane: the Julia set of $z\mapsto z^2-1.75488$.}
	\label{fig:airplane}
\end{figure}
\begin{figure}[h]
	\centering
	\begin{tikzpicture}
	\node at (0,0){\includegraphics[width=0.7\linewidth]{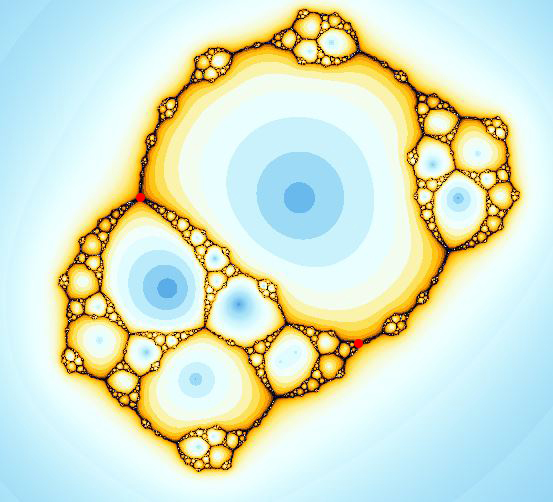}};
	\node at (-1.4,2) {$U_1$};
	\node at (-3.5,2.5) {$U_0$};
	\node at (-2.75,0.25) {$U_2$};
	\node at (-3.1,1.3){$z_0$};
	\node at (2,-2.25) {$w_0$};
	\end{tikzpicture}
	\caption{The Julia set of a cubic Newton map after conjugacy. The three fixed Fatou domains are $U_0,U_1$ and $U_2$.}
	\label{fig:cubic-Newton}
\end{figure}

\begin{example}
	\tu{(1) If $\mc{J}_f$ is a Sierpi\'{n}ski carpet, then points in $\mc{J}_f$ are either buried or type 1a.}
	
	\tu{(2) In Figure \ref{fig:airplane}, the tips of the ``airplane" Julia set are semi-buried and thus are of type 1b. A point in the boundary of a bounded Fatou domain is of type 4a if it receives at least two external rays; otherwise, it is of type 2a. There are uncountably many type 4b points in the real axis.}
	
	\tu{(3) In Figure \ref{fig:cubic-Newton}, the intersection $\bigcap_{i=0,1,2}\partial U_i=\{z_0\}$ is a singleton. The point $z_0$ is of type 3, as well as its preimages $\bigcup_{i\geq 0}f^{-i}(z_0)$. The intersection $\partial U_0\cap \partial U_1$ is a cantor set, where the points of $\bigcup_{i\geq 0}f^{-i}(w_0)$ (resp. $(\partial U_0\cap\partial U_1)\setminus\bigcup_{i\geq 0}f^{-i}\{z_0, w_0\}$) are of type 2a (resp. type 2b).}
\end{example}

\begin{lemma}\label{lemma:countable}
	The set of types 2b, 3 and 4a points is countable.
\end{lemma}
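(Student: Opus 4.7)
I plan to handle each type separately, in each case exploiting that Fatou domains form an at most countable family (Lemma~\ref{lemma:Dynamics_Components}(3)). For type~3, each point lies in some triple intersection $\bigcap_{i=1}^{3}\partial U_i$ of three distinct Fatou domain boundaries; Lemma~\ref{lemma:intersection_three_component} bounds this by two points per triple, and the unordered triples of Fatou domains form a countable collection, which disposes of this case immediately.

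For type~4a, the point $z$ lies on some $\partial U$ with $\#\tu{Acc}(z,U)\geq 2$, which via the B\"ottcher coordinate means $z$ has at least two preimages under the Carath\'eodory extension $\phi_U^{-1}:\partial\mathbb{D}\to\partial U$ from Lemma~\ref{lemma:Dynamics_Components}(4). I would invoke the classical fact that the set of multiple boundary values of a conformal map onto a bounded simply connected domain with locally connected boundary is at most countable. The geometric input is the non-crossing property of co-landing rays in the simply connected $U$: for each such multiple point $z$, a pair of adjacent rays $R_1,R_2\in\tu{Acc}(z,U)$ yields a Jordan curve $\overline{R_1}\cup\overline{R_2}$ through the B\"ottcher center and $z$, bounding a Jordan subdomain $F_z\subseteq U$; for distinct multiple points $z\neq z'$ the subdomains $F_z$ and $F_{z'}$ are either disjoint or properly nested, because internal rays at different arguments are disjoint in $U$ and so the corresponding chords in the B\"ottcher disk cannot interleave. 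A standard lamination-counting argument (gaps of a non-crossing chord family on $\partial\mathbb{D}$ form a countable collection) then gives countability of $\{F_z\}$ for each $U$, and summing over the countably many Fatou domains finishes the case.

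For type~2b, I would fix a pair $(U,U')$ of Fatou domains with $\partial U\cap\partial U'\neq\emptyset$ and let $\Omega,\Omega'$ be the Jordan disks from Lemma~\ref{lemma:Dynamics_Components}(2) containing $U'$ and $U$ respectively. An elementary point-set argument shows $\partial\Omega\cap\partial\Omega'=\partial U\cap\partial U'$. The condition defining type~2a is precisely that $z$ is a two-sided accumulation point, along the Jordan curve $\partial\Omega$, of $(\partial\Omega\cap\partial\Omega')\setminus\{z\}$ (shrinking arcs $]x_n,y_n[_{\partial\Omega}$ with $z$ in each require $x_n,y_n\to z$ from the two sides). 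Hence a type~2b point is either isolated in this closed set or only a one-sided accumulation, so in either case it must be an endpoint of some complementary open arc of $\partial\Omega\cap\partial\Omega'$ in $\partial\Omega$. Complementary arcs of a closed subset of the Jordan curve $\partial\Omega\cong S^1$ form a countable collection with at most two endpoints each; summing over the countably many pairs $(U,U')$ concludes this case.

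The principal obstacle will be type~4a, where one must import (or carefully re-establish) the classical countability of multiple boundary values of a conformal map onto a locally connected simply connected domain; by contrast, the other two cases reduce to elementary combinatorial bookkeeping on Jordan curves combined with Lemmas~\ref{lemma:intersection_three_component} and~\ref{lemma:Dynamics_Components}(3).
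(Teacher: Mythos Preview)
Your treatments of types~3 and~2b are correct and essentially match the paper's proof. The problem is type~4a.

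The ``classical fact'' you invoke --- that the set of multiple boundary values of the Carath\'eodory extension $\phi_U^{-1}:\partial\mathbb{D}\to\partial U$ is at most countable --- is \emph{false}. Consider the airplane polynomial (Figure~\ref{fig:airplane}): by the symmetry $z\mapsto\bar z$, every real Julia point $x$ in the interior of $K\cap\mathbb{R}$ (away from the bounded Fatou boundaries) receives the two external rays at angles $\pm\theta_x$, hence is a multiple boundary value of $\phi_{U_\infty}^{-1}$. These points are uncountable; indeed the paper's own Example notes that type~4b points are uncountable along the real axis. Your argument never distinguishes~4a from~4b, so it would prove too much. Your lamination sketch fails at the same place: the domains $F_x$ for real $x$ form an uncountable \emph{nested} chain, and a laminar (pairwise disjoint-or-nested) family of arcs need not be countable.

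What makes type~4a work is precisely the ``not semi-buried'' condition you discarded: it guarantees $z\in\partial\Omega$ for some component $\Omega$ of $\widehat{\mathbb{C}}\setminus\overline{U}$. The paper fixes $\Omega$ first and, for each $z\in\partial\Omega$ with $\#\mathrm{Acc}(z,U)\geq 2$, chooses the angle arc $I_z$ corresponding to a sector that does \emph{not} meet $\Omega$. Then for $z\neq w$ both on $\partial\Omega$, neither sector contains the other point, so $I_z\cap I_w=\emptyset$ --- genuinely disjoint, not merely nested --- and countability follows immediately. Summing over the countably many pairs $(U,\Omega)$ finishes the case. You should reinstate the role of $\Omega$.
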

\begin{proof}
	By definition, an accessible point of type $3$ is contained in the common boundaries of at least three distinct Fatou domains. Then, according to Lemma \ref{lemma:intersection_three_component}, such points are at most countable.
	
	We write
	$S(\Omega,U):=\{z\in\partial\Omega, \#\tu{Acc}(z,U)\geq 2\}$ for a component $\Omega$ of $\olC\setminus \ol{U}$.
	The union of points of type 4a can be represented as
$$\bigcup_{U}\bigcup_{\Omega\in\tu{Comp}(\wh{\mb{C}}\setminus\ol{U})}S(\Omega,U),$$
	where $U$ runs over all Fatou domains. Thus it suffices to show that $S(\Omega,U)$ is at most countable.
	For every $z\in S(\Omega,U)$, there exists an open arc $I_z=(\theta_z,\theta_z')\subseteq \mb{R}/\mb{Z}$ with $R_U(\theta_z), R_U(\theta_z')\in\tu{Acc}(z,U)$ satisfying that the landing point of every $R_U(\theta), \theta\in I_z$ avoids $\partial \Omega$. By the fact that $I_z\cap I_w=\emptyset$ whenever $z\neq w\in S(\Omega,U)$, we see that the family $\{I_z\}_{z\in S(\Omega, U)}$ is at most countable, and so is  $S(\Omega,U)$.

Finally, we prove that the points of type 2b are at most countable. For two distinct Fatou domains $U_1$ and $U_2$, let $\Omega_i$ be the unique component of $\wh{\mb{C}}\setminus\ol{U_i}, i\in\{1,2\}$, such that
$$U_2\subseteq \Omega_1\tu{ and }U_1\subseteq\Omega_2.$$ Then $\Omega_1$ and $\Omega_2$ are Jordan domains and $\partial U_1\cap\partial U_2=\partial \Omega_1\cap\partial \Omega_2$.
Thus the components of $\partial \Omega_i\setminus(\partial\Omega_1\cap\partial\Omega_2)$ are open intervals. The collection of these intervals is denoted by $\mc{I}(U_1,U_2)$. Then the set
$$\bigcup_{U_1\neq U_2\in\tu{Comp}(\mc{F}_f)}\bigcup_{I\in\mc{I}(U_1,U_2)}\tu{end}\,(I),$$
which contains all type 2b points, is countable.
The proof is complete.
\end{proof}

\subsection{First-in and Last-out rule}
In this subsection, we show that any two points in $\mc{J}_f$ can be joined by a regulated arc. An arc $\gamma$ is called \emph{regulated} if it is a regulated graph, that is, if the intersection of $\gamma$ with $\mc{F}_f$ is a union of internal rays. Obviously, endpoints of regulated arcs are either Fatou centers or points in $\mc{J}_f$.

 Let $\gamma:[0,1]\to\wh{\mb{C}}$ be an arc and $S$ be a closed set in $\wh{\mb{C}}$ such that $\gamma\cap S\neq\es$. Let
$$t_1:=\tu{inf}\,\{t\in[0,1]:\gamma(t)\in S\}\tu{ and }t_2:=\tu{sup}\,\{t\in[0,1]:\gamma(t)\in S\}.$$
Then the \emph{first-in time} that $\gamma$ meets $S$ is $t_1$ and the \emph{first-in place} is $\gamma(t_1)$; similarly, the \emph{last-out time} is $t_2$, and the \emph{last-out place} is $\gamma(t_2)$.

Recall that the open, closed and semi-open segments in $\gamma$ between $z$ and $w$ for $z\neq w\in \gamma$ are denoted by $]z,w[_\gamma$, $[z,w]_\gamma$, and $[z,w[_\gamma$ (or $]z,w]_\g$) accordingly.
\begin{lemma}[First-in and Last-out rule]\label{first_in_last_out}
	Let $\gamma:[0,1]\to\wh{\mb{C}}$ be an arc with two Julia-type endpoints $z_1$ and $z_2$. Then there exists a regulated arc ${\gamma_\infty}$ connecting $z_1$ and $z_2$ such that  $${\gamma_\infty}\cap\mc{J}_f\subseteq\gamma\cap\mc{J}_f.$$ Moreover, the intersection of $\g_\infty$ with the closure of a Fatou domain $U$, if non-empty, is either
	\begin{itemize}
		\item  one point in $\partial U$, or
		\item  the union of two closed internal rays, or
		\item  two points in $\partial U$.
	\end{itemize}
The last case happens only if there exists another Fatou domain $U'$ with $\tu{diam}\,U'\leq\tu{diam}\,U$ such that $\g_\infty\cap\partial U=:\{x,y\}\subseteq \partial U\cap\partial U'$ and $\g_\infty\cap\ol{U'}$ consists of two closed internal rays landing at $x$ and $y$, respectively.
\end{lemma}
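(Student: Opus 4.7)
The plan is to construct $\g_\infty$ iteratively by applying the First-in/Last-out surgery to each Fatou domain that $\g$ enters, in decreasing order of diameter, and then to take a limit. By Lemma~\ref{lemma:Dynamics_Components}(3), enumerate the Fatou domains meeting $\g$ as $U_1,U_2,\ldots$ with $\tu{diam}(U_n)$ non-increasing and tending to $0$. Set $\g_0:=\g$ with a fixed parameterization. Inductively, if $\g_{n-1}\cap U_n=\emptyset$ let $\g_n:=\g_{n-1}$; otherwise let
$$t_1:=\inf\{t:\g_{n-1}(t)\in\ol{U_n}\},\qquad t_2:=\sup\{t:\g_{n-1}(t)\in\ol{U_n}\},$$
set $x:=\g_{n-1}(t_1)$ and $y:=\g_{n-1}(t_2)$, both in $\partial U_n$, and define $\g_n$ by replacing $\g_{n-1}|_{[t_1,t_2]}$ with the concatenation of the closed internal ray from $x$ to the center of $U_n$ and the closed internal ray from the center to $y$ (collapsing to the point $x$ if $x=y$). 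Since $\g_{n-1}$ avoids $\ol{U_n}$ outside $[t_1,t_2]$, both the replaced segment and the inserted one lie in $\ol{U_n}$, so $\g_n$ is again a Jordan arc; moreover the inserted rays meet $\mc J_f$ only at $\{x,y\}\subseteq\g_{n-1}\cap\mc J_f$, giving $\g_n\cap\mc J_f\subseteq\g\cap\mc J_f$ by induction.

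For the limit I use the estimate $\sup_t\tu{dist}(\g_n(t),\g_{n-1}(t))\le\tu{diam}(U_n)\to 0$. Fix a parameter $t$; if $t$ becomes strictly interior to the modification interval at some step $n$, then $\g_n(t)\in U_n$ and no further modification can touch $t$ because distinct Fatou interiors are disjoint. Hence modifications at $t$ can only iterate while the successive images lie on some $\partial U_m$; at any such image $z$, the next modifying domain must have $z$ in its closure, and by Lemma~\ref{lema:facts}(4) there are only finitely many such domains. Combined with the shrinking diameters, this gives a uniform Cauchy estimate, hence a continuous limit $\g_\infty:[0,1]\to\wh{\mb C}$ satisfying $\g_\infty\cap\mc J_f\subseteq\g\cap\mc J_f$.

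It remains to verify that $\g_\infty$ is a Jordan arc satisfying the three alternatives on each Fatou domain $U$. Injectivity passes to the limit because each $\g_n$ is injective, and the inserted ray pair inside $U_n$ is not disturbed by later modifications at $U_m$, $m>n$, since $U_m\cap U_n=\emptyset$. If $U=U_n$ was actually modified, the union-of-two-rays alternative holds by construction. Otherwise $\g_\infty\cap U=\emptyset$ and $\g_\infty\cap\ol U\subseteq\partial U$: a single point yields the first alternative; two points $\{x,y\}$ force a subarc of $\g_\infty$ connecting $x$ and $y$ in $\wh{\mb C}\setminus U$, and tracing the surgery this subarc must be the inserted bypass of some processed Fatou domain $U'$ with landing points $\{x,y\}\subseteq\partial U\cap\partial U'$. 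The diameter inequality $\tu{diam}(U')\le\tu{diam}(U)$ then follows from the enumeration order, for a strictly larger $U'$ would have been processed before $U$ and its inserted rays would not have prevented $\g$ from still entering $U$, thus requiring $U$ itself to be modified and contradicting our case assumption. I expect the main obstacle to be the convergence step --- controlling successive modifications at the countable set of boundary parameters and verifying injectivity of $\g_\infty$ --- while the trichotomy then follows from bookkeeping of the construction.
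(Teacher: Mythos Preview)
Your approach coincides with the paper's --- enumerate Fatou domains by decreasing diameter, perform the First-in/Last-out surgery, and pass to a limit --- but your convergence argument has two genuine errors.

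First, the estimate $\sup_t\tu{dist}(\g_n(t),\g_{n-1}(t))\le\tu{diam}(U_n)$ is false: on the modification interval $I_n=[t_1,t_2]$ you have $\g_n(I_n)\subseteq\ol{U_n}$, but $\g_{n-1}|_{I_n}$ only touches $\partial U_n$ at its endpoints and may wander arbitrarily far from $U_n$ in between. (Even if the bound held, $\sum_n\tu{diam}(U_n)$ need not converge.)

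Second, your backup claim --- that once $t$ is strictly interior to some $I_n$ it is never modified again --- is also false. Knowing $\g_{m-1}(t)\in U_n$ and $U_n\cap\ol{U_m}=\emptyset$ does \emph{not} give $t\notin I_m$: the interval $I_m$ is defined by first-in and last-out times, and $\g_{m-1}$ can leave $\ol{U_m}$ (and pass through $U_n$) between them. In fact for $n<m$ one has either $I_n\subseteq I_m$ or $\tu{int}(I_n)\cap\tu{int}(I_m)=\emptyset$, and the former case does occur, overwriting the ray pair you inserted in $U_n$ entirely. For the same reason your injectivity argument (``the inserted ray pair is not disturbed by later modifications'') breaks down.

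The paper exploits precisely this nesting structure. Since $\g(a_n),\g(b_n)\in\ol{U_n}$ and $\tu{diam}(U_n)\to0$, every totally ordered chain of intervals $I_n$ is finite; hence maximal intervals $I_{n_k}$ exist and have pairwise disjoint interiors covering $\bigcup_n I_n$. On each maximal $I_{n_k}$ the sequence $\g_n$ stabilizes for $n\ge n_k$, and outside $\bigcup_k I_{n_k}$ one has $\g_n=\g_0$ for all $n$. Uniform convergence then follows from the uniform continuity of $\g_0$ together with $|I_{n_k}|\to0$ and $\tu{diam}(U_n)\to0$. This structure also makes the trichotomy on $\g_\infty\cap\ol U$ transparent, whereas your final paragraph's ``tracing the surgery'' argument relies on the same false stability claim.
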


\begin{proof}
	Enumerate all Fatou domains of $f$ as $U_1, U_2,\dotsc$ such that
	$\tu{diam}\,U_n\geq \tu{diam}\,U_{n+1}$. Let $\gamma_0:=\gamma$. We will inductively construct a sequence of arcs $\gamma_n$ such that the limit $\gamma_\infty$ is as required.
	
	Let $n$ be a positive integer. If $\#\gamma_{n-1}\cap\ol{U}_n\leq 1$, we set $\gamma_n:=\gamma_{n-1}$ and $I_n:=\emptyset$. Otherwise, we obtain the distinct first-in time $a_n$ and last-out time $b_n$ of $\gamma_{n-1}$ meeting $\ol{U}_n$ at the corresponding places $x_n:=\gamma_{n-1}(a_n)$ and $y_n:=\gamma_{n-1}(b_n)$. Then we remove the open segment $]x_n,y_n[_{\gamma_{n-1}}$ and replace it by the union of the two internal rays of $U_n$ that land at $x_n$ and $y_n$. The new arc $\gamma_n$ obtained in this way differs from $\gamma_{n-1}$ only on the interval $I_n:=[a_n,b_n]$.

	Let $\mc{I}$ denote the collection of all non-empty intervals  $I_n$. The construction above implies that either $I_m\subseteq I_n$ or $ \tu{int}\,(I_n)\cap\tu{int}\,(I_m)=\es$ for $1\leq m<n$. 
	We define a partial relation ``$\prec$'' on $\mc{I}$ in the sense that $$I_m\prec I_n\Leftrightarrow I_m\subseteq I_n.$$ Observe that $\g(a_n),\g(b_n)\in \ol{U}_n$ and $\lim\limits_{n\to\infty}\tu{diam}\,U_n=0$.
Hence, any totally ordered subsets of $\mc{I}$ under this partial relation are finitely many, and thus have a maximal element. Let $\{I_{n_k}\}$ be the collection of maximal elements in $\mc{I}$ with $n_1<n_2<\cdots$. It follows that these $I_{n_k}$'s have mutually disjoint interiors, and their union equals $\bigcup_{n\geq1}I_n$.

	We now check the uniform convergence of $\gamma_n$. Since the diameter of $I_{n_k}$ tends to zero as $k$ goes to $\infty$,
	 the uniform continuity of $\gamma_0$ on $[0,1]$ implies that there exists an integer $k_0>0$ such that $\tu{diam}\,\gamma_0(I_{n_k})<\epsilon$ when $k\geq  k_0$. We assume that $k_0$ is so large that $\tu{diam}\,U_n\leq \epsilon$ when $n\geq n_{k_0}$. Note that $\g_n(t)=\g_m(t)$ if $t\in I_{n_1}\cup\cdots\cup I_{n_{k_0}}$ and $m,n\geq n_{k_0}$, and $\g_0(t)=\g_n(t)$ if $t\in[0,1]\setminus\cup_{k\geq1}I_{n_k}$, $n\geq1$.
Hence,	for $m,n\geq n_{k_0}$ and $t\in[0,1]$, we have the estimate:	
	\begin{equation}\notag
	\begin{split}
	\tu{dist}\,(\gamma_m(t),\gamma_n(t))
	\leq&~\mathop{\tu{sup}}^{}_{t\in I_{n_1}\cup\cdots\cup I_{n_{k_0}}}\tu{dist}\,(\gamma_m(t),\gamma_n(t))+ \mathop{\tu{sup}}^{}_{t\in [0,1]\setminus I_{n_1}\cup\cdots\cup I_{n_{k_0}}}\tu{dist}\,(\gamma_m(t),\gamma_n(t))\\
	\leq &~ 0+\mathop{\tu{sup}}^{}_{t\in [0,1]\setminus \cup_{k\geq 1}I_{n_k}}\tu{dist}\,(\gamma_m(t),\gamma_n(t))+\mathop{\tu{sup}}^{}_{t\in \cup_{k>{k_0}} I_{n_k}}\tu{dist}\,(\gamma_m(t),\gamma_n(t))\\
	\leq &~0+0+\mathop{\tu{sup}}^{}_{t\in \cup_{k> {k_0}}I_{n_k}}\tu{dist}\,(\gamma_m(t),\gamma_0(t))+\mathop{\tu{sup}}^{}_{t\in \cup_{k> {k_0}}I_{n_k}}\tu{dist}\,(\gamma_0(t),\gamma_n(t)) \\
	\leq &~2\mathop{\tu{sup}}_{k>{k_0}}\tu{diam}\,\gamma_0|_{I_{n_k}}+2\mathop{\tu{sup}}_{n> n_{k_0}}\tu{diam}\,U_n\\
	\leq &~4\epsilon.
	\end{split}
	\end{equation}	
 It follows that $\gamma_n$ uniformly converges to a limit, say $\gamma_{\infty}$. From the inductive construction of $\g_n$, we see that $\gamma_\infty$ is regulated.
\end{proof}

\begin{remark}\label{rem:first-last}\tu{
Lemma \ref{first_in_last_out} will be  frequently used in the sequel. For simplicity, we denote the resulting arc after the first-in and last-out processes upon an arc $\gamma$ by $\tu{First-Last}\,(\gamma)$.}
\end{remark}

\begin{lemma}\label{lem:first-last}
	The following statements hold for the arc $\gamma_\infty$ obtained in Lemma \ref{first_in_last_out}.
	\begin{itemize}
		\item[(1)] $\tu{int}(\gamma_\infty)$ is disjoint from the type 1b and 4b points;
		\item[(2)] Let $z$ be a type 2a point in $\tu{int}(\gamma_\infty)$. Then the rays in $\tu{Acc}(z)$ are contained in $\gamma_\infty$.
	\end{itemize}
\end{lemma}
\begin{proof}
	(1) If it is not true, let $z\in\partial U$ be a type 1b or 4b point in $\tu{int}(\gamma_\infty)$. Then at least one component of $\gamma_\infty\setminus\{z\}$ is contained in $K_U$ and converging to the point $z$. Since $\#\gamma_\infty\cap\partial U\leq 2$, there is an open subarc of $\gamma_\infty$ approaches $z$ within $\tu{int}(K_U)$. This is impossible as the point $z$ is assumed to be semi-buried in $\partial U$.
	
	(2) The argument is very similar to the proof of statement (1). If it is not true, a component of $\gamma_\infty\setminus\{z\}$ tends to $z$ in $K_{U_1}\cap K_{U_2}$, where $\tu{Dom}(z)=\{U_1,U_2\}$. Since $\gamma_\infty$ meets $\partial U_i, i=1,2,$ in at most two points, an open subarc of $\gamma_\infty$ approaches the point $z$ in $\tu{int}(K_{U_1})\cap\tu{int}(K_{U_2})$. This is a contradiction, since the point $z$ is accumulated by points from $\partial U_1\cap\partial U_2$ in both directions.	
\end{proof}

\subsection{Clean arcs}
We introduce here the first kind of regulated arcs, namely clean arcs. They are the ``units'' to build up the other specific regulated arcs and graphs.
\begin{definition}[Clean arcs]\label{def:clean}
\tu{A regulated arc $\g$ in $\wh{\mathbb{C}}$ is called \emph{clean}, provided that
\begin{itemize}
	\item every accessible point $z\in\tu{int}(\g)$ is of either type 1a or 2a, and all rays in $\tu{Acc}(z)$ belong to $\g$;
	\item $\gamma$ satisfies the joint condition at the accessible Julia-type endpoints if any, i.e., $\gamma$ tends to each accessible Julia-type endpoint along either an internal ray or a special entrance.
\end{itemize}}

\end{definition}
Obviously, clean arcs satisfy the joint condition. Moreover, subarcs with endpoints in the union of $\mc{J}_f$ and Fatou centers of clean arcs are clean as well.
\begin{remark}
	\tu{A clean arc $\gamma$ disjoint from a Fatou domain $U$ implies that $\gamma$ is contained in $\ol{\Omega}$ for some $\Omega\in\tu{Comp}(\olC\setminus\ol{U})$.}
\end{remark}

\begin{proposition}[Existence of clean arcs I]\label{prop:exist_well_regulated_arc}
	Let $\gamma$ be an arc with two Julia-type endpoints $z_1$ and $z_2$. Then for any $\epsilon>0$ there exists a clean arc $\gamma_\infty$ joining $z_1$ and $z_2$ such that
	\begin{equation}\label{eq:neighh}
	\gamma_\infty\cap\mc{J}_f\subseteq\mc{N}(\gamma\cap \mc{J}_f,\epsilon).
	\end{equation}
	Moreover, $\gamma_\infty$ approaches $z_i$ along an internal ray whenever $\tu{Dom}(z_i)\neq \es$ for $i=1,2$.
\end{proposition}

We need the following lemma to prove Proposition \ref{prop:exist_well_regulated_arc}.

\begin{lemma}[Avoiding countably many points]\label{lem:avoiding}
	Let $D\subseteq\wh{\mb{C}}$ be a Jordan domain and $S$ be a countable set in $\wh{\mb{C}}$. Let $\gamma$ be an arc with
	\begin{equation}\label{eq:avoiding}
		\tu{int}(\gamma)\subseteq D.
	\end{equation} Then for any $\epsilon>0$ there exists an arc $\wt{\gamma}$ with $\tu{end}(\wt{\gamma})=\tu{end}(\gamma)=\wt{\gamma}\cap\gamma$ satisfying \eqref{eq:avoiding} such that
	$$\tu{int}(\wt{\gamma})\cap S=\es\tu{ and }\wt{\gamma}\subseteq\mc{N}(\gamma,\epsilon).$$
\end{lemma}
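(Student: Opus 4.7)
The plan is to enclose $\gamma$ in a thin Jordan-disk neighborhood $\Gamma\subseteq\ol D$ and then produce $\wt\gamma$ by a foliation argument inside $\Gamma$. Concretely, I first construct a closed topological disk $\Gamma$ satisfying $\gamma\subseteq\Gamma$, $\Gamma\setminus\tu{end}(\gamma)\subseteq D$, $\Gamma\subseteq\mc{N}(\gamma,\epsilon)$, together with a homeomorphism $\psi:\ol{\mb{D}}\to\Gamma$ such that $\{\psi(-1),\psi(1)\}=\tu{end}(\gamma)$ and $\psi([-1,1])$ is a reparametrization of $\gamma$. When $\tu{end}(\gamma)\subseteq D$ this is immediate from thickening a piecewise-linear approximation of $\gamma$ in $D$. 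When one or both endpoints of $\gamma$ lie on $\partial D$, I use the Jordan-domain hypothesis on $D$: by Carath\'eodory's theorem the Riemann map extends to a homeomorphism $\phi:\ol{\mb{D}}\to\ol D$, so the construction reduces to thickening the arc $\phi^{-1}(\gamma)\subseteq\ol{\mb{D}}$ into a thin closed ribbon in $\ol{\mb{D}}$ and then applying $\phi$; uniform continuity of $\phi$ on $\ol{\mb{D}}$ lets me choose the ribbon thin enough that its image lies in $\mc{N}(\gamma,\epsilon)$.

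Once $\Gamma$ is in hand the task reduces to a disk problem. Set $S':=\psi^{-1}(S\cap\tu{int}(\Gamma))\subseteq\mb{D}$, which is still countable. It suffices to produce an arc $\wt\alpha$ in $\ol{\mb{D}}$ from $-1$ to $+1$ with $\tu{int}(\wt\alpha)\subseteq\mb{D}\setminus S'$, for then $\wt\gamma:=\psi(\wt\alpha)$ has endpoints $\tu{end}(\gamma)$, interior contained in $\tu{int}(\Gamma)\setminus S\subseteq D\setminus S$, and total $\wt\gamma\subseteq\Gamma\subseteq\mc{N}(\gamma,\epsilon)$. For $\wt\alpha$ I use the one-parameter family $\{\alpha_c\}_{c\in(\mb{R}\setminus\{0\})\cup\{\infty\}}$ of circular arcs in $\ol{\mb{D}}$ joining $\pm 1$: for finite $c$, $\alpha_c$ is the intersection of $\ol{\mb{D}}$ with the circle through $\pm 1$ centered at $ic$, while $\alpha_\infty:=[-1,1]$. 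These arcs are pairwise disjoint away from $\{\pm 1\}$ and their interiors foliate $\mb{D}$, so each $s\in S'$ lies on a unique $\alpha_{c(s)}$; since $S'$ is countable, any $c^\ast$ outside the countable set $\{c(s):s\in S'\}$ yields an admissible $\wt\alpha:=\alpha_{c^\ast}$.

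The main obstacle is the first step, specifically ensuring that $\Gamma$ meets $\partial D$ exactly at $\tu{end}(\gamma)$ when one or both endpoints of $\gamma$ lie on $\partial D$. This is where the hypothesis that $D$ is a Jordan domain is crucial: the Carath\'eodory extension of the Riemann map allows the local behavior of $\partial D$ near each boundary endpoint of $\gamma$ to be modeled on an arc of $\partial\mb{D}$, where the ribbon construction is routine. Without this hypothesis, $\partial D$ near $\tu{end}(\gamma)$ could be too wild for $\Gamma$ to be a topological closed disk with the correct boundary contact, and one would be forced to work with a chain of local modifications inside disks (a feasible but messier alternative requiring careful bookkeeping to keep the concatenated curve injective).
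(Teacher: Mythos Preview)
Your proof is correct and takes essentially the same approach as the paper: both arguments construct a one-parameter family of arcs in $\ol{D}$ joining $\tu{end}(\gamma)$, pairwise disjoint except at the endpoints and all lying in $\mc{N}(\gamma,\epsilon)$, and then use countability of $S$ to select one whose interior avoids $S$. The paper simply asserts the existence of an isotopy $\psi:\gamma\times[0,1]\to\ol{D}$ fixing the endpoints with $\psi(\tu{int}(\gamma)\times[0,1])\subseteq D$ and disjoint time-slices, whereas you spell out the construction of the ribbon $\Gamma$ via Carath\'eodory's extension and then foliate it explicitly by circular arcs through $\pm1$; the extra detail you supply is precisely what justifies the step the paper leaves implicit.
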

\begin{proof}
Choose an isotopy $\Psi:\g\times[0,1]\to \ol{D}$ rel. $\tu{end}(\gamma)$ such that
 $\Psi(\g\times [0,1])\subseteq \mc{N}(\gamma,\epsilon)$, $\Psi(\tu{int}(\gamma)\times[0,1])\subseteq D$ and the intersection of the two arcs $\Psi(\g\times\{t_1\})$ and $\Psi(\g\times\{t_2\})$ is $\tu{end}(\gamma)$  for $t_1\not=t_2\in[0,1]$. Since $S$ is countable, there is a $t_0\in[0,1]$ such that the interior of $\wt{\g}:=\Psi(\gamma\times\{t_0\})$ avoids $S$. Then $\wt{\g}$ is as required.
\end{proof}

\begin{proof}[Proof of Proposition \ref{prop:exist_well_regulated_arc}]
In the trivial case that $\tu{Dom}(z_1)\cap \tu{Dom}(z_2)$ contains a Fatou domain $U$, the union of the two closed internal rays of $U$ landing at $z_1$ and $z_2$ is as required.

Otherwise, by Lemmas \ref{lemma:countable} and \ref{lem:avoiding} one may assume that $\gamma$ is disjoint from the types 2b, 3 and 4a points. One may assume further that $\gamma$ tends to each $z_i$ along a Fatou domain whenever $\tu{Dom}(z_i)\neq\es$. Consider the new arc $\wt{\gamma}:=\tu{First-Last}(\gamma)$. By Lemmas \ref{first_in_last_out} and \ref{lem:first-last}, it satisfies that
\begin{itemize}
	\item[(i)] $\tu{Dom}(z_1)\cap\tu{Dom}(z_2)=\es$;
	\item[(ii)] each accessible point $z\in\tu{int}(\wt{\gamma})$ is of either type 1a or 2a; the latter happens only if all rays in $\tu{Acc}(z)$ are contained in $\wt{\gamma}$; and
	\item[(iii)] the non-empty intersection of $\wt{\gamma}$ with the closure of a Fatou domain is either one point or the union of two closed internal rays.
\end{itemize}
To see (iii), if it is not true, there is a Fatou domain $U$ such that ${\ol{U}}\cap \wt{\gamma}=\{\xi_1,\xi_2\}$ with $\#\tu{Dom}(\xi_i)\geq 2$ by Lemma \ref{first_in_last_out}. By (ii), we have $\xi_1, \xi_2 \in \tu{end}(\wt{\g})=\{z_1, z_2\}$. This contradicts (i).

For simplicity, an arc with two Julia-type endpoints satisfying (i)(ii) and (iii) is called \emph{pre-clean}. Let $\beta$ be a pre-clean arc. A point $z\in\tu{int}(\beta)$ is called \emph{dusty} if it is of type 1a and the unique ray in $\tu{Acc}(z)$ does not belong to $\beta$.

It is useful to see that a point $z\in \partial U\cap \tu{int}(\beta)$ is dusty for $
\beta$ $\Leftrightarrow$ $\ol{U}\cap \beta=\{z\}$. By definition, the points that prevent $\beta$ from being clean are just the dusty points if $\beta$ satisfies the joint condition at endpoints.

The strategy for constructing $\gamma_\infty$ is as follows. First enumerate all Fatou domains such that
$$\tu{diam}\,U_1\geq \cdots\geq \tu{diam}\,U_n\geq \tu{diam}\,U_{n+1}\geq \cdots.$$
Then beginning with $\gamma_1:=\wt{\gamma}$, we construct a sequence of pre-clean arcs $\gamma_{n+1}$ by induction such that $\gamma_{n+1}$ contains no dusty point from the boundaries $\partial U_1,\ldots,\partial U_n$. Finally, we prove that the sequence $\gamma_{n+1}$ uniformly converges to a required arc $\gamma_\infty$. To do this, we need the lemma.

\begin{lemma}\label{lem:pre-clean-property}
	Let $\beta: [0, 1]\to \olC$ be a pre-clean arc joining $z_1$ and $z_2$. Let $z\in \tu{int}(\beta)$ be a dusty point. Let $Dom(z)=\{U_0\}$ and $\Omega$ be the component of $\olC\setminus\ol{U}_0$ containing $\beta\setminus\{z\}$. Then the following statements hold.
	\begin{itemize}
		\item[(1)] There exist two sequences $\{x_n\}$ and $\{y_n\}$ in the union of type 2a and buried points within $\beta$ such that $z\in\,]x_n,y_n[_{\beta}$, $x_n\to z$ and $y_n\to z$ as $n\to\infty$.
		\item[(2)] Let $\alpha_n\subseteq \Omega$ be an arc joining $x_n$ and $y_n$  with its interior disjoint from both $\beta$ and the types 2b, 3 and 4a points (see Lemmas \ref{lemma:countable} and \ref{lem:avoiding} for the existence of such $\alpha_n$). Then $${\beta}_n:=\tu{First-Last}\left((\beta\setminus[x_n,y_n]_{\beta})\cup\alpha_n\right)$$ is pre-clean.
		\item[(3)] The arcs $\alpha_n$ can be chosen so that $\tu{diam}\,\alpha_n\to 0$ as $n\to \infty$. In this case, let 
		$$t_n^-:=\tu{sup}\,\{t: \beta([0, t])\subset \beta_n\}\tu{ and } t_n^+:=\tu{inf}\,\{t:\beta([t, 1])\subset \beta_n\}.$$
		Then we have $$\tu{diam}\,[a^-_n,a^+_n]_\beta\to 0\tu{ and }\tu{diam}\,[a^-_n, a^+_n]_{\beta_n}\to 0\tu{ as }n\to\infty,$$
		where $a_n^-:=\beta(t_n^-)$ and $a_n^+:=\beta(t_n^+)$.
	\end{itemize}
	
\end{lemma}
\begin{proof}
	(1) If the statement is not true, there is an arc $[z,w]_{\beta}$ such that the closed subset $S:=[z,w]_{\beta}\cap \mc{J}_f$ of $\beta$ consists of only type 1a points.
	 Note that every type 1a point in $\beta$ is accumulated by the points in $\beta\cap\mc{J}_f$. Thus $S$ has no isolated point. Moreover, $S$ is totally disconnected, as $S$ contains only type 1a points. Therefore, $S$ is a cantor set. This contradicts the fact that the set of type 1a points in $\beta$ is countable.
	
	Statement (2) follows directly from the conditions of pre-clean
	arcs for $\beta$ and the properties of resulting arcs after the first-in
	 and last-out rule given in Lemma \ref{lem:first-last}.
	
	(3) For any $\epsilon>0$, choose an open disk $D$ around the dusty point $z$ with $\tu{diam}\,D<\epsilon$ such that $\ol{D}\cap \beta$ is an arc whose interior is contained in $D$.
	For large integer $n$, we may choose $\alpha_n\subseteq D$. Let $\mc{U}_n$ be the collection of Fatou domains, except $U_0$, whose closures meet $\alpha_n$. Since the point $z$ is of type 1a, we may assume $n$ is so large that the domains in $\mc{U}_n$ are contained in $D$. Let $\wt{\beta}_n:=(\beta\setminus[x_n,y_n]_{\beta})\cup\alpha_n$. By condition, we have $\beta_n=\tu{First-Last}(\wt{\beta}_n)$.
	
	We claim that every component $\eta$ of $\beta_n\setminus\wt{\beta}_n$ is covered by $\mc{U}_n$.
	Indeed, since $\beta_n\cap\mc{J}_f\subseteq \wt{\beta}_n$,	
	the component $\eta$ is contained in a Fatou domain $U$. If $\tu{end}(\eta)\cap \alpha_n\neq \es$, then $U\in \mc{U}_n$  by the choice of $\mc{U}_n$. Otherwise, we have $\tu{end}(\eta)\subseteq \beta\setminus[x_n,y_n]_\beta$ and $\eta$ is formed by two internal rays of $U$, say $R_1$ and $R_2$. Note that $\#(\ol{U}\cap \beta)\geq \#(\tu{end}(\eta)\cap\beta)=2$. By properties (ii) and (iii) for the pre-clean arc $\beta$, we have $\ol{U}\cap \beta=R_1\cup R_2$ . Thus $R_1\cup R_2\subseteq \beta\setminus [x_n, y_n]_\beta\subseteq \wt{\beta}_n$, contradicting the choice of $\eta$. This implies the case $\tu{end}(\eta)\cap \alpha_n=\es$ cannot happen. The claim follows.
	
	
	By the claim and the choice of $n$, we have
	\begin{equation}\label{eq:set-minus}
	\beta_n\setminus\beta\subseteq \beta_n\setminus\left(\beta\setminus [x_n,y_n]_\beta\right)\subseteq (\beta_n\setminus\wt{\beta}_n)\cup\alpha_n\subseteq D.
	\end{equation}
	Then $a_n^-$ and $a_n^+$ are contained in $D$. Thus $[a_n^-, a_n^+]_\beta\subset D$ and $\tu{diam}\,[a^-_n,a^+_n]_\beta<\epsilon$ by the choice of $D$. To show $[a_n^-,a_n^+]_{\beta_n}\subset D$, by \eqref{eq:set-minus} it suffices to prove that the set $[a_n^-,a_n^+]_{\beta_n}\cap \beta$ is contained in $D$. Indeed, the definitions of $a_n^-$ and $a_n^+$ imply that $]a_n^-, a_n^+[_{\,\beta_n}$ is disjoint from $[z_1, a_n^-]_\beta\cup [a_n^+, z_2]_\beta$. So $[a_n^-, a_n^+]_{\beta_n}\subset D$. The proof is complete.	
\end{proof}

We now show the inductive construction of $\{\gamma_{n+1}\}_{n\geq 1}$. 
If $\tu{int}(\gamma_n)$ contains no dusty points from $\partial U_n$, let $\gamma_{n+1}:=\gamma_n$. Otherwise, we have $\ol{U_n}\cap \gamma_n=\{z\}$ with $z$ a dusty point of $\gamma_n$. In this case we choose an open disk $D_n$ around $z$ such that
 \begin{itemize}
 \item $D_n$ avoids $U_1,\ldots,U_{n-1}$ and  $\tu{diam}\,D_n<\epsilon/2^n$;
 \item $\g_n\cap D_n=]w_-,w_+[_{\gamma_n}=:\gamma_n(I_n)$ with $w_-$ and $w_+$ in the union of type 2a and buried points; this can be done by Lemma \ref{lem:pre-clean-property}\,(1);
 \item whenever $\ol{D_n}\cap \ol{D_m}\neq\es$ for an integer $m<n$, it holds that $D_n\Subset D_m$;
 \end{itemize}
By Lemma \ref{lem:pre-clean-property}\,(2)(3), we may modify ${\gamma_n}$ within $D_n$ such that the resulting arc $\gamma_{n+1}$ has the properties:
\begin{enumerate}
	\item $\gamma_{n+1}$ is pre-clean and $\tu{int}(\gamma_{n+1})\cap \ol{U}_n=\es$;
	\item $\gamma_{n+1}(t)=\gamma_n(t)$ for $t\in [0,1]\setminus I_n$;
	\item $\sup\{\tu{dist}(\gamma_n(t),\gamma_{n+1}(t)): t\in I_n\}<\epsilon/2^n$.
\end{enumerate}

The uniform convergence of $\g_n$ follows from the estimate:
$$
\sup\limits_{t\in [0,1]}\tu{dist}(\g_m(t), \g_n(t))\leq\sum_{k=m}^{n-1}\sup\limits_{t\in [0,1]}\tu{dist}(\g_k(t),\g_{k+1}(t))\leq\sum_{k=m}^{n-1}\epsilon/2^k< \epsilon/2^{m-1}
$$
for every $1\leq m<n$, due to properties (2)(3). Let $\g_\infty:[0,1]\to \wh{\mb{C}}$ be the limit. The choices of $D_n$ and $I_n$ yield that
\begin{itemize}
\item the intervals $I_n$ are either nested or disjoint with $\tu{diam}\,I_n\to 0$ as $n\to\infty$;
\item if $\g_m(t)\in D_m$ for an integer $m\geq 1$ and $t\in I_m$, then $\g_n(t)\in D_m$ for all $n\geq m$.
\end{itemize}
These two properties imply that $\g_\infty$ is injective. From the construction, $\g_\infty$ is pre-clean and contains no dusty point.
Note that $\g_1$ tends to each endpoint $z_i$ along an internal ray whenever $\tu{Dom}(z_i)\neq \es$. By definition $\gamma_1$ satisfies the joint condition at $z_i$, so does $\g_\infty$. Therefore, $\g_\infty$ is clean. Clearly $\gamma_\infty$ satisfies \eqref{eq:neighh}. The proof of Proposition \ref{prop:exist_well_regulated_arc} is complete.
\end{proof}

\begin{corollary}[Existence of clean arcs II]\label{cor:two_centers}
	Let $\gamma$ be an arc with its two endpoints $z_1$ and $z_2$ in the union of Fatou centers and buried points.
	Then for any $\epsilon>0$ there exists a  clean arc $\wt{\gamma}$ joining $z_1$ and $z_2$ such that
	$$\wt{\gamma}\cap\mc{J}_f\subseteq \mc{N}(\gamma\cap \mc{J}_f,\epsilon).$$
\end{corollary}
\begin{proof}
	Without loss of generality, we may assume that $\tu{int}(\gamma)$ avoids the union of types 2b, 3 and 4a points by Lemmas \ref{lemma:countable} and \ref{lem:avoiding}.
	
	We first deal with the case that both $z_i$ are the centers of Fatou domains $U_i$ for $i=1,2$. Let $w_1$ (resp. $w_2$) be the last-out (resp. first-in) place of $\gamma$ meeting $\ol{U}_1$ (resp. $\ol{U}_2$).  Clearly $w_i\in\partial U_i$.
	
	 We claim that each $w_i$ is of either type 1a or 2a. To see this, by the property of $w_i$, there is a component of $\gamma\setminus w_i$ disjoint from $\ol{U_i}$ approaching $w_i$. Then the point $w_i$ cannot be semi-buried. It follows that $w_i$ is not of type 1b and 4b. Combining the assumption on the points of $\gamma$ at the beginning of the proof, the claim follows.
	  
	
	Now we apply Proposition \ref{prop:exist_well_regulated_arc} to the arc $[w_1, w_2]_\gamma$, and obtain a clean arc $\alpha$ joining $w_1$ to $w_2$, which satisfies $\alpha\cap\mc{J}_f\subseteq \mc{N}(\gamma\cap\mc{J}_f,\epsilon)$. Let $\wt{z}_1$ (resp. $\wt{z}_2$) be the last-out (resp. first-in) place of $\alpha$ meeting $\ol{U}_1$ (resp. $\ol{U}_2$). It is possible that $\{\wt{z}_1,\wt{z}_2\}\neq \{w_1,w_2\}$. Since $\wt{z}_i$ are of type 1a or 2a by the claim, $[\wt{z}_1, \wt{z}_2]_\alpha$ tends to $\wt{z}_i$ along Fatou domains if and only if $\wt{z}_i$ is of type 2a; see the argument in the proof of Lemma \ref{lem:first-last}\,(2).
	Let $R_i$ be the internal rays of $U_i$ landing at $\wt{z}_i$. Then the new arc $$\wt{\gamma}:=R_1\cup[\wt{z}_1,\wt{z}_2]_{\alpha}\cup R_2$$
	joining $z_1$ and $z_2$ is clean and is as required.
	
	In the case that $z_1=c(U_1)$ and $z_2$ is a buried point, the proof is quite similar. Let $w_1$ be the last-out place of $\gamma$ meeting $\ol{U_1}$. By applying Proposition \ref{prop:exist_well_regulated_arc} to $[w_1, z_2]_{\gamma}$, we obtain a clean arc $\alpha$. Let $\wt{z}_1$ be last-out place of $\alpha$ meeting $\ol{U_1}$ and $R_1$ be the internal ray of $U_1$ landing at $\wt{z}_1$. 	Then $\wt{\gamma}:=R_1\cup[\wt{z}_1, z_2]_{\alpha}$ is as required.
	
	If both $z_i$ are buried points, it follows directly by Proposition \ref{prop:exist_well_regulated_arc}. The proof is complete.
\end{proof}
To get a better control of the behavior of a clean arc $\gamma$ near its accessible Julia-type endpoint $z\in\partial U$, we usually require that $\gamma$ tends to $z$ through a Fatou domain within a specific component of $\tu{int}(K_U)$ with $K_U:=\olC\setminus U$. This motivates the following definition for clean arcs.

 \begin{definition}\label{def:prop}
 	\tu{Let $\Omega$ be a component of $\tu{int}(K_U)$ for a Fatou domain $U$. An arc $\gamma$ is called \emph{properly clean} for $\Omega$ if $\g$ is a clean arc joining a point $z\in\partial\Omega$ and a Fatou center in $\Omega$, such that $\gamma\setminus\{z\}\subset \Omega$ and it approaches the endpoint $z$ along an internal ray whenever $\Omega$ contains a Fatou domain of $\tu{Dom}(z)$.}
 \end{definition}

The requirement for properly clean arcs tending Julia-type endpoints will be used to establish Lemma \ref{lem:strong-regulated}\,(3); see Figure \ref{fig:proper} for illustration.
\begin{proposition}[Existence of clean arcs III]\label{prop:prop}
	Let $\Omega$ be a component of $\tu{int}(K_{U_1})$ for a Fatou domain $U_1$. Let $\gamma$ be an arc joining a point $z_1\in \partial \Omega$ to a Fatou center $z_2=c(U_2)$ such that $\gamma\setminus\{z_1\}\subset \Omega$. Then for any $\epsilon>0$, there is a properly clean arc $\wt{\gamma}$ for $\Omega$ joining $z_1$ and $z_2$ such that $$\wt{\gamma}\cap\mc{J}_f\subseteq \mc{N}({\gamma}\cap\mc{J}_f,\epsilon).$$
\end{proposition}
\begin{proof}
	Without loss of generality, we may assume $\gamma$ is disjoint from the types 2b, 3 and 4a points by Lemmas \ref{lemma:countable} and \ref{lem:avoiding}. Moreover, by necessary modification we also assume $\gamma$ tends to $z_1$ along a Fatou domain in $\Omega$ whenever such a domain exists.
	
	Let $\wt{z}_2$ be the first-in place of $\gamma$ meeting $\ol{U_2}$. If $z_1=\wt{z}_2$, the desired arc $\wt{\gamma}$ is the closed internal ray of $U_2$ landing at $z_1$, we are done. Otherwise, let $\alpha_1:=[{z}_1, \wt{z}_2]_\g$. Then $\tu{int}(\alpha_1)$ is contained in a component $W$ of $\Omega\setminus\ol{U}_2$. Moreover, by the reason explained in the third paragraph of the proof of Corollary \ref{cor:two_centers}, the point $\wt{z}_2\in\tu{int}(\gamma)$ is of type 1a or 2a. In the latter case, assume $\tu{Dom}(\wt{z}_2)=\{U_2, U_2'\}$. Clearly $U_2'\neq U_1$ as $\wt{z}_2\notin \partial U_1$. Since $\#\tu{Acc}(\wt{z}_2, U_2)=1$, we have $U_2'\subset \Omega$ and so $U_2'\subset W$. By taking some modification on $\alpha_1$, which is similar as the operation on $\gamma$ in the first paragraph of the proof, we assume $\alpha_1$ tends to $\wt{z}_2$ along the domain $U_2'$ in this case.
	
	
	In what follows, we aim to turn $\alpha_1$ into a clean arc $\alpha_\infty$ joining ${z}_1$ and $\wt{z}_2$ with its interior still contained in $W$.
	
	The construction is similar to that of Proposition \ref{prop:exist_well_regulated_arc}. We provide only a sketch. Consider the non-trivial case that $\tu{Dom}(z_1)\cap \tu{Dom}(\wt{z}_2)=\es$. Then the arc $\alpha_2:=\tu{First-last}(\alpha_1)$ with its interior in $W$ is pre-clean. We enumerate all the Fatou domains except $U_1$ and $U_2$ such that
	$$\tu{diam}\,U_3\geq \tu{diam}\,U_4\geq\cdots.$$
	For $n\geq 3$, we inductively choose a small disk $D_n\Subset W$ around the dusty point $\alpha_n\cap\partial U_n$ (if any) such that $D_n$ is disjoint from $U_1,\ldots, U_{n-1}$ and it has some other properties as stated in the proof of Proposition \ref{prop:exist_well_regulated_arc}. Then we take modifications on $\alpha_n$ just within $D_n$ such that the resulting arc $\alpha_{n+1}$ has no dusty point from $\partial U_n$ by Lemma \ref{lem:pre-clean-property}\,(2)(3).
	
	Let $\alpha_\infty$ be the limit of $\alpha_n$. 
	Then $\tu{int}(\alpha_\infty)$ has no dusty point and is contained in $W$. Note that $\tu{int}(\alpha_\infty)\cap \ol{U}_i=\es, i=1,2$, and $\alpha_\infty$ approaches $\wt{z}_2$ along an internal ray
 if and only if $\wt{z}_2$ is of type 2a. Let $\wt{\gamma}:=\alpha_\infty\cup R$, where $R$ is the only internal ray of $U_2$ landing at $\wt{z}_2$. Then $\wt{\gamma}$ is clean. Thus, to show $\wt{\gamma}$ is properly clean for $\Omega$, we only need to verify that $\wt{\gamma}$ satisfies the conditions at the endpoint $z_1$.
	
If $\Omega$ contains a Fatou domain of $\tu{Dom}({z}_1)$, by the modification for $\gamma$ at the beginning near the point ${z}_1$, it holds that $\wt{\gamma}$ tends to ${z}_1$ along an internal ray. Suppose that $\Omega$ does not contain an element of $\tu{Dom}(z_1)$ and consider the two cases: $\#\tu{Acc}({z}_1, U_1)=1$ and $\#\tu{Acc}({z}_1,U_1)\geq 2$.
		
	If $\#\tu{Acc}({z}_1, U_1)=1$, then the domains of $\tu{Dom}({z}_1)\setminus\{U_1\}$ are contained in $\Omega$ and so ${z}_1$ is of type 1a under the assumption.
	If $\#\tu{Acc}({z}_1, U_1)\geq 2$, then $\wt{\gamma}$ tends to ${z}_1$ along a special entrance bounded by two internal rays of $U_1$. Thus $\wt{\gamma}$ satisfies the joint condition at the point $z_1$. The proof is complete.	
\end{proof}

\subsection{$\Omega$-clean arcs}
Consider two Fatou centers $z_1$ and $z_2$ from distinct components of $\tu{int}(K_U)~(=\wh{\mb{C}}\setminus \ol{U})$ for a Fatou domain $U$. Any clean arc joining $z_1$ and $z_2$ must pass through the center $c(U)$. However, in many cases, we need a regulated arc joining $z_1$ and $z_2$ within $K_U$, just like the DH-regulated arcs in the polynomial case. This motivates us to define $\Omega$-clean arcs relative to a Fatou domain $U$.

\begin{definition}[$\Omega$-clean arcs]\label{def:quasi_regulated}
	\tu{Let $f$ be a PCF and $U$ be a Fatou domain. Let $\gamma$ be an arc in $K_U$ with two endpoints in the union of Fatou centers and $\partial U$. Then $\gamma$ is called \emph{$\Omega$-clean (relative to $U$)}, provided that,
for any component $\Omega$ of $\tu{int}(K_U)$, if $\g_\Omega:=\g\cap\ol{\Omega}$ contains at least two points, then
 $\gamma_\Omega$ is either
 \begin{itemize}
 	\item a properly clean arc for $\Omega$ (then the Fatou-type endpoint of $\gamma_\Omega$ is that of $\gamma$), or
 	\item the union of two properly clean arcs for $\Omega$ with equal Fatou-type endpoints and their interiors disjoint.
 \end{itemize}}
\end{definition}

\begin{remark}\label{rem:K_U-regulated}\tu{(1) There is an overlap between clean arcs and $\Omega$-clean arcs: any $\Omega$-clean arc contained in the closure of a component of $\tu{int}(K_U)$ is clean; conversely, any properly clean arc is $\Omega$-clean.}

\tu{(2) If $f$ is a postcritically finite polynomial and $U$ is the basin of infinity, then $\Omega$-clean arcs relative to $U$ are just the DH-regulated arcs.}
\end{remark}
\begin{lemma}
	Every $\Omega$-clean arc satisfies the joint condition.
\end{lemma}
\begin{proof}
	Let $z$ be an accessible point in an $\Omega$-clean arc $\gamma$.  If $z\in \tu{int}(K_U)$, the joint condition for $\gamma$ at the point $z$ is satisfied since $z$ belongs to a clean arc by definition. In the following we assume $z\in\partial U$.
	
	Let $S:=\bigcup_{R\in \tu{Acc}(z,U)} \ol{R}$. Note that $S\cap \gamma=\{z\}$, and the one or two components of $\gamma\setminus\{z\}$
are contained in distinct components of $\olC\sm S$. We assume $R_1, R_2\in \tu{Acc}(z,U)$ bound a component $W$ of $\olC \sm S$ that intersects $\gamma$. Note that $R_1=R_2\Leftrightarrow \#\tu{Acc}(z,U)=1\Rightarrow z\in\tu{end}(\gamma)$. Let $\alpha$ be the unique component of $\gamma\setminus\{z\}$ contained in $W$.
	
	If $W$ contains no Fatou domains of $\tu{Dom}(z)$, then $R_1$ and $R_2$ are adjacent. Thus $\alpha$ tends to the point $z$ along a special entrance. Otherwise, the arc $\alpha$ tends to $z$ along an internal ray by definition. With the same argument on $\wt{\alpha}:=\gamma\setminus\ol{\alpha}$ if $z\in\tu{int}(\g)$, we conclude that $\gamma$ satisfies the joint condition at the point $z$. From the arbitrariness of the choice of $z$, the lemma follows.
\end{proof}

\begin{proposition}[Existence of $\Omega$-clean arcs]\label{lemma:quasi_regulated}
Let $f$ be a PCF and $U$ be a Fatou domain. Let $\g$ be an arc in $K_U$ with endpoints $z_1$ and $z_2$
 in the union of Fatou centers and $\partial U$. Then for any $\epsilon>0$, there exists an $\Omega$-clean arc $\g_\infty$ relative to $U$ joining $z_1$ and $z_2$ such that
 $$\g_\infty\cap\mc{J}_f\subseteq \mc{N}(\g\cap\mc{J}_f,\epsilon).$$
\end{proposition}
\begin{proof}
We enumerate the components of $\tu{int}(K_U)$ as $\Omega_0,\Omega_1, \ldots$. Let $\gamma_0:=\gamma$. For $n\geq 0$, if the intersection $\gamma_{n}\cap\ol{\Omega}_n$ contains at most one point, let us define $\gamma_{n+1}:=\gamma_{n}$. Otherwise, we obtain the distinct first-in and last-out places of $\gamma_{n}$ meeting $\ol{\Omega}_n$, denoted by $w_1$ and $w_2$, respectively. Note that for each $i\in\{1,2\}$, $\tu{the point $w_i$ is a Fatou center}\Leftrightarrow w_i\in \Omega_n\Leftrightarrow w_i\in\{z_1,z_2\}\cap\mc{F}_f.$

We claim that $[w_1,w_2]_{\gamma_n}=\gamma_n\cap\ol{\Omega}_n$. Indeed, if it is not true, there is a component $\beta$ of $\gamma_n\setminus \ol{\Omega}_n$, whose two distinct endpoints are contained in $\partial \Omega$, such that $\beta\cup\partial \Omega$ bounds a Jordan domain $W$ containing $\Omega_n$ with $W\sm\Omega_n\neq \es$. Since $K_U$ is full and $\partial W\subseteq K_U$, the domain $W$ belongs to a component of $\tu{int}({K_U})$, which strictly contains $\Omega_n$. We arrive at a contradiction. 

By the claim we can pick an arc $\wt{\beta}$ with $\tu{int}(\wt{\beta})\subseteq \Omega_n$ joining $w_1$ and $w_2$ such that $\wt{\beta}\cap\mc{J}_f\subseteq \mc{N}(\gamma_n\cap\mc{J}_f,\epsilon/2)$. By taking necessary modification on a suitable subarc of $\wt{\beta}$, we assume further that $\wt{\beta}$ passes through a Fatou center, say $z$.

By Corollary \ref{cor:two_centers} when $w_i\in\Omega_n$ or by Proposition \ref{prop:prop} otherwise, we obtain two clean arcs $\alpha_i$ joining $w_i$ to $z$ such that the two sets $\alpha_1\cap\mc{J}_f$ and $\alpha_2\cap\mc{J}_f$ are disjoint; moreover, $\alpha_i$ are properly clean for $\Omega_n$ whenever $w_i\in\partial \Omega_n$.
	Let $w$ be the first-in place of $\alpha_1$ meeting $\alpha_2$.	By construction, $w$ is a Fatou center. We replace the segment $[w_1, w_2]_{\g_n}$ of $\gamma_n$ with $$[w_1,w]_{\alpha_1}\cup [w, w_2]_{\alpha_2}$$ and keep the other points of $\gamma_n$ unchanged. The resulting arc is defined as $\gamma_{n+1}$.
	
	By induction, we obtain a sequence of arcs $\gamma_n$. By Lemma \ref{lem:new}\,(1), the arcs $\gamma_n$ uniformly converge to a limit $\gamma_\infty$. From the construction, we see that $\gamma_\infty$ is an $\Omega$-clean arc relative to $U$ joining $z_1$ and $z_2$. The proof is complete.
\end{proof}

\begin{lemma}\label{fact:1}
	Let $U$ be a Fatou domain. Let $\gamma$ be a clean arc whose endpoints are not semi-buried in $\partial U$, or an $\Omega$-clean arc relative to $U'\neq U$. Then $\gamma\cap\partial U$ contains at most finitely many points, and $\gamma\setminus U$ is covered by the closures of finitely many components of $\tu{int}(K_U)$. 
\end{lemma}

\begin{proof}
	We first assume that $\g$ is clean with $\tu{end}(\g)$ not semi-buried in $\partial U$. By the definition of clean arcs, if $z\in\tu{int}(\g)\cap \partial U$, then the internal ray of $U$ landing at $z$ is contained in $\g$. It follows that $\tu{int}(\gamma)\cap \partial U$ contains at most two points.
	Hence $\tu{int}(\g)\sm \partial U$ has at most three components, and each of them is contained in either $U$ or a component of $\tu{int}(K_U)$. Combining the assumption that points in $\tu{end}(\g)$ are not semi-buried in $\partial U$, we need at most five components of $\tu{int}(K_U)$ such that their closures cover $\g\sm U$.
		
	If $\gamma$ is $\Omega$-clean relative to $U'\neq U$, we may assume $U\subseteq\Omega\in \tu{int}(K_{U'})$ and $\#\gamma\cap \ol{\Omega}\neq \es$. The lemma holds clearly if $\#(\gamma\cap \ol{\Omega})=1$. Otherwise, by definition $\gamma_\Omega:=\gamma\cap\ol{\Omega}$ is formed by one or two properly clean arcs. In particular, $\g_\Omega$ is a clean arc and the Julia-type endpoints of $\gamma_\Omega$ cannot be semi-buried in $\partial U$.
	Since $\gamma\cap \partial U=\gamma_\Omega\cap \partial U$, by the first case we have $\#(\g\cap \partial U)\leq 4$ and $\g_\Omega\sm U$ is covered by the closures of at most five components of $\tu{int}(K_U)$. Note that each component of $\gamma\setminus\gamma_\Omega$ is disjoint from $\ol{U}$ and so is contained in a component of $\tu{int}(K_U)$. Then the proof is complete. 	 
\end{proof}

The following lemma will be used in the proof of Proposition \ref{prop:new}.

\begin{lemma}\label{lem:dusty_linking}
	Let $U$ be a Fatou domain and $z\in \partial U$. Then for any $\epsilon>0$, there exists $\delta>0$ such that any two points $w_1, w_2\in\partial U$ with $\tu{dist}(w_i,z)<\delta$ can be connected by an $\Omega$-clean arc $\wt{\gamma}$ relative to $U$ with $\tu{diam}\,\wt{\gamma}\setminus S<\epsilon$, where $S:=\bigcup\,\{V,V\in\tu{Dom}(z)\}$.
\end{lemma}
\begin{proof}
	It is known that every compact and locally connected metric space is locally arcwise connected; see \cite[Lemmas 17.17 and 17.18]{Mi1}. Then $K_U$ is locally arcwise connected. There is an arcwise connected and open (relative to $K_U$) subset $W$ of $K_U$ containing the point $z$ such that $\tu{diam}\,W< \epsilon/3$. Let $\mc{U}$ be the collection of Fatou domains that intersect $W$ but are not in $\tu{Dom}(z)$. By choosing $W$ sufficiently small, we require further that $\tu{diam}\,V<{\epsilon}/{3}$ for all $V\in \mc{U}$.
	
	
	Choose a small number $\delta>0$ such that the set $D:=\{w\in \partial U:\tu{dist}(w,z)< \delta\}$ is nested in $W$. For any two points $w_1$ and $w_2$ in $D$, there is an arc $\gamma$ in $W$ joining them. From $\gamma$, by Proposition \ref{lemma:quasi_regulated}, we obtain an $\Omega$-clean arc $\wt{\gamma}$ relative to $U$ joining $w_1$ and $w_2$, such that $\wt{\g}\cap\mc{J}_f\subseteq W$. Note that $\wt{\gamma}$ is covered by the set $S\cup W\cup\bigcup_{V\in\,\mc{U}}V$. We thus have the estimate
	$$\tu{diam}\,\wt{\gamma}\setminus S\leq \tu{diam}\,W+2\,\sup\,\{\tu{diam}\,V: V\in\,\mc{U}\}<\epsilon.$$
	The proof is complete.
\end{proof}

In the end of this section, we give the definition of \emph{hull} for components of $\tu{int}(K_U)$ and semi-buried points in $\partial U$, which will be used in the proof of Theorem \ref{thm:admissible}.

Let $\Omega_1,\ldots,\Omega_l, l\geq 1$ be distinct components of $\tu{int}(K_U)$ and $Q$ be a set of finitely many semi-buried points in $\partial U$ (possibly $Q=\es$). A \emph{hull} of $\Omega_1,\ldots,\Omega_l$ and $Q$ is some minimal continuum in $K_U$ containing $\Omega_1,\ldots,\Omega_l$ and $Q$. The precise construction is as follows.

We first consider the case that $Q=\emptyset$. If $l=1$, then $[\Omega_1]:=\ol{\Omega_1}$ is a \emph{hull} of $\Omega_1$. If $l\geq 2$, by induction we assume $[\Omega_1,\ldots,\Omega_{l-1}]$ is a hull. If $\Omega_l\cap [\Omega_1,\ldots,\Omega_{l-1}]\neq\emptyset$, then $$[\Omega_1,\ldots,\Omega_{l}]:=[\Omega_1,\ldots,\Omega_{l-1}]\cup\ol{\Omega}_l.$$
Otherwise, we choose an $\Omega$-clean arc $\gamma$ relative to $U$ joining a Fatou center in $\Omega_1$ to another one in $\Omega_l$ by Proposition \ref{lemma:quasi_regulated}. Let $w_1$ (resp. $w_2$) be the last-out (resp. first-in) place of $\gamma$ meeting $[\Omega_1,\ldots,\Omega_{l-1}]$ (resp. $\ol{\Omega}_l$). Then we define in this case
$$[\Omega_1,\ldots,\Omega_l]:=\left\{\begin{array}{ll}[\Omega_1,\ldots,\Omega_{l-1}]\cup[w_1,w_2]_{\gamma}\cup\ol{\Omega}_l\cup\ol{\Omega}, & {\textup{if $w_1\in\Omega\in\tu{Comp}(\tu{int}(K_U))$};} \\
{[\Omega_1,\ldots,\Omega_{l-1}]\cup[w_1, w_2]_\gamma\cup \ol{\Omega}_l\cup \ol{\Omega},} & {\textup{if $w_1\in \partial \Omega$ and $\Omega\cap [w_1, w_2]_\g\neq \es$;}}\\
{[\Omega_1,\ldots,\Omega_{l-1}]\cup[w_1, w_2]_{\gamma}\cup\ol{\Omega}_l}, & \textup{otherwise.}
\end{array}\right.$$
The set $[\Omega_1,\ldots,\Omega_l]$ is called a \emph{hull of $\Omega_1,\ldots,\Omega_l$}. 

Now we consider the case that $Q\neq \es$. Assume $Q=\{z_1,\ldots, z_s\}$. Let $Q'=\es$ if $s=1$ and $Q'=\{z_1,\ldots, z_{s-1}\}\subset Q$ otherwise. By induction we assume $H:=[\Omega_1, \ldots, \Omega_l; Q']$ is a hull. If $z_s\in H$, then 
$[\Omega_1,\ldots, \Omega_l; Q]:=H$. If $z_s\notin H$, let $\gamma$ be an $\Omega$-clean arc relative to $U$ joining a Fatou center in $\Omega_1$ to the point $z_s$. Let $w$ be the last-out place of $\gamma$ meeting $H$. Similarly as above, let us define
$$[\Omega_1,\ldots,\Omega_l;Q]:=\left\{\begin{array}{ll}H\cup[w,z_s]_{\gamma}\cup\ol{\Omega}, & {\textup{if $w\in\Omega\in\tu{Comp}(\tu{int}(K_U))$};} \\
{H\cup[w, z_s]_\gamma\cup\ol{\Omega},} & {\textup{if $w\in \partial \Omega$ and $\Omega\cap [w_1, w_2]_\g\neq \es$;}}\\
{H\cup[w, z_s]_{\gamma}}, & \textup{otherwise.}
\end{array}\right.$$
The set $[\Omega_1,\ldots,\Omega_l; Q]$ is called a \emph{hull} of $\Omega_1,\ldots, \Omega_l$ and $Q$.

There are many choices of hulls. However, it always holds that $H\cap \partial U=\wt{H}\cap \partial U$ for distinct hulls $H$ and $\wt{H}$ of given $\Omega_1,\ldots,\Omega_l$ and $Q$.

\begin{lemma}\label{lem:joint}
	Let $H$ be a hull of finitely many components of $\tu{int}(K_U)$ and a finite set of semi-buried points in $\partial U$. Assume $\tu{int}(H)=\Omega_1\cup \cdots\cup \Omega_l$. Let $G_i$ be regulated graphs with $\ol{G_i\cap \Omega_i}=G_i$ for $1\leq i\leq l$. Suppose $G:=\left(H\setminus \ol{\tu{int}(H)}\right)\cup \bigcup G_i$ is a graph. If each $G_i$ satisfies the joint condition at accessible points, then $G$ also satisfies the joint condition at every accessible point. Moreover, every branch point of $G$ either is a branch point of a graph $G_i$ or is multi-accessible. 
\end{lemma}
\begin{proof}
	Let $\alpha$ be the closure of a component of $H\setminus\ol{\tu{int}(H)}$. Then $\alpha$ is a tree. The family of edges $\mc{E}_\alpha$ of $\alpha$ consists of $\Omega$-clean arcs relative to $U$ with endpoints in $\partial U$. Since any edge in $\mc{E}_\alpha$ intersects $G_i$ only possibly at the endpoints, then $G$ satisfies the joint condition at every accessible point in the interior of such an edge. 
	
	Let $\mc{E}=\{G_1,\ldots, G_l\}\cup \bigcup\mc{E}_\alpha$, where $\alpha$ runs over all closures of components of $H\setminus\ol{\tu{int}(H)}$. Let $v$ be a common point of at least two elements in $\mc{E}$, say $T_1,\ldots T_n$. Clearly $v\in\partial U$. To complete the proof that $G$ satisfies the joint condition at every accessible point, we just need to show that $G$ satisfies the joint condition at the point $v$. 
	
	We claim that $T_i$ belong to distinct components of $K_U\setminus\{v\}$. For otherwise, we may assume $T_1, T_2\subset C\in\tu{Comp}(K_U\setminus\{v\})$. Since $C$ is open in $K_U$, which is locally arc-wise connected, the component $C$ is also locally arc-wise connected. Note that a locally arc-wise connected and connected metric space is arc-wise connected. Thus there is an arc $\beta$ in $C$ joining $T_1\setminus\{v\}$ and $T_2\setminus\{v\}$. Let $w_1$ (resp. $w_2$) be the last-out (resp. first-in) place of $\beta$ meeting $T_1$ (resp. $T_2$). Let $\beta_i$ be arcs in $T_i$ joining $v$ and $w_i$. Then $\beta_1\cup [w_1, w_2]_\beta\cup \beta_2$ forms a loop, which lies in the closure of a component $\Omega$ of $\tu{int}(K_U)$. 
	
	If $\Omega\notin \{\Omega_1,\ldots,\Omega_l\}$, then $T_1$ and $T_2$ are $\Omega$-clean arcs from $\bigcup_\alpha \mc{E}_\alpha$. Thus the two clean arcs $\gamma_i:=T_i\cap \ol{\Omega}$ are tending to the common endpoint $v$ within $\Omega$. It contradicts the construction of hulls.	
	If $\Omega\in\{\Omega_1,\ldots,\Omega_l\}$, we may assume $\Omega=\Omega_1$. Then one of $T_1$ and $T_2$ equals $G_1$, say $T_1$, since $G\cap \ol{\Omega}_1=G_1$. Note that every $\alpha$ intersects $\ol{\Omega}_1$ in at most one point. 
	We conclude that $T_2$ is not from $\bigcup_\alpha\mc{E}_\alpha$. Since the closures of two distinct components of $\tu{int}(K_U)$ have at most one common point, $T_2$ cannot be one of $G_2,\ldots, G_l$. Therefore, $T_1=T_2=G_1$, a contradiction. The claim is proved.
	
	By the claim, the subgraphs $T_1, \ldots, T_n$ are separated by the rays in $\tu{Acc}(v, U)$. Since each $T_i$ satisfies the joint condition at $v$, by definition the graph $G$ also has the joint condition at $v$. 
	
	Now we assume $v$ is a branch point of $G$ but is not a branch point for every subgraph $G_i$. Then $v\in \partial U$ and it is the common point of at least two elements in $\mc{E}$. Let $s:=\#\{T: v\in T\in\mc{E}\}$. By the argument above there are at least $s$ internal rays of $U$ landing $v$. Hence $v$ is multi-accessible if $s\geq 3$. We are done in this case. If $s=2$, assume $v\in T_1\cap T_2$. Since $v$ is a branch point of $G$, we may assume $\tu{valence}(T_1,v)\geq 2$. Thus $T_1$ is a graph in $\{G_1,\ldots, G_l\}$, say $G_1$. Since $G_1\subset\ol{\Omega_1}$ and $G_1$ satisfies the joint condition at $v$, the domain $\Omega_1$ contains at least a ray of $\tu{Acc}(v)$. Therefore, $\#\tu{Acc}(v)\geq 3$. The proof is complete. 
\end{proof}

\section{Existence of admissible graphs}\label{sec_existence}
After the preparations given in \S \ref{sec_arcs}, we aim to construct admissible graphs in this section; see Definition \ref{def:admissible} for admissible graphs.

\subsection{Connectable graphs}
A regulated graph $G$ usually intersects a component $\Omega$ of $\tu{int}(K_U)$ in a disconnected set. In the construction of admissible graphs, we need to add some clean arcs within $\ol{\Omega}$ into $G$ so that the disconnected set $G\cap\ol{\Omega}$ becomes connected. To ensure the new graph still satisfies the joint condition, the added arcs are required to be disjoint from $G\cap \mc{J}_f$. This motivates us to introduce the so-called \emph{connectable property} for graphs.

A graph $G$ is called \emph{connectable} if for every Fatou domain $U$ and every component $\Omega$ of $\tu{int}(K_U)$ with $\Omega\cap G
\neq \emptyset$, the set $\Omega\setminus (G\cap \mc{J}_f)$ is connected. See Figure \ref{fig:proper}.

\begin{figure}[h]
\centering
\begin{tikzpicture}
\node at (0,0){\includegraphics[width=0.9\linewidth]{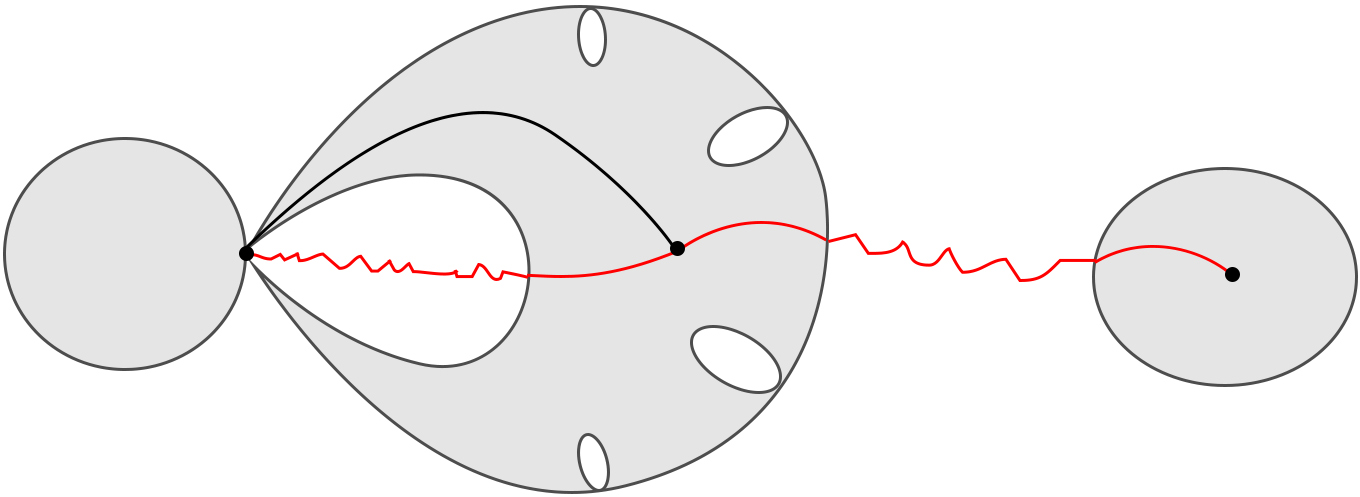}};
\node at (-3.25, -0.450){$\alpha$};
\node at (-5.5,0.25) {$U_*$};
\node at (-5.5,1.75) {$\Omega_*$};
\node at (-2.75,0.25) {$\Omega$};
\node at (-0.75,0.25) {$U$};
\node at (-2,1.75) {$R$};
\node at (6,0.25) {$U_0$};
\node at (3,0.25) {$\gamma$};
\node at (-4.55,0.27) {$z$};
\end{tikzpicture}
\caption{The curve $\gamma$, colored red, is a clean arc joining a point $z\in \mc{J}_f$ to a Fatou center $c(U_0)$ with $\tu{Dom}(z)=\{U,U_*\}$. As shown, $\gamma$ passes through just two Fatou domains $U$ and $U_0$. Here $\gamma$ is not connectable, since $\Omega\setminus(\gamma\cap\mc{J}_f)$ is disconnected. The other arc $\wt{\gamma}:=\{z\}\cup R\cup [c(U),c(U_0)]_{\gamma}$ is properly clean for $\Omega_*$. By Lemma \ref{lem:strong-regulated}\,(3), $\wt{\gamma}$ is connectable.}
\label{fig:proper}
\end{figure}
The next three lemmas tell us when a graph is connectable.
\begin{lemma}\label{lem:strong-regulated}
	The following statements hold.
	\begin{itemize}
		\item[(1)] Every clean arc with two Fatou-type endpoints is connectable.
		\item[(2)] Every clean arc joining a Fatou center to a buried point is connectable.
		\item[(3)] Every properly clean arc is connectable.
		\item[(4)] Every $\Omega$-clean arc is connectable.
		\item[(5)] Let $\wt{G}$ be the union of a connectable graph $G$ and finitely many internal rays. If $\wt{G}$ is a graph, then $\wt{G}$ is connectable.
	\end{itemize}
\end{lemma}
\begin{proof}
	Note that for an arc $\gamma$ and a Jordan domain $\Omega$, the closure of every component of $\gamma\cap \Omega$ is an arc whose endpoints lie in either $\partial \Omega$ or $\Omega$. Then, to show $\Omega\setminus(\g\cap\mc{J}_f)$ is connected, it is enough to consider each component $\alpha$ of $\gamma\cap\Omega$ whose two endpoints lie in $\partial \Omega$ and to prove $\alpha\cap\mc{F}_f\neq \es$.
 In the following we always assume that $U$ is any given Fatou domain, $\Omega$ is any component of $\tu{int}(K_U)$ such that $\Omega\cap\gamma\neq \es$, and $\alpha$ is a component of $\gamma\cap \Omega$ with $\tu{end}(\alpha):=\{z_1,z_2\}\subseteq \partial \Omega$.
	
	When $\gamma$ is clean, we claim that at least one of the two points $z_1$ and $z_2$ lies in $\tu{end}(\gamma)$.
	For otherwise, by definition both $z_1$ and $z_2$ are of type 1a or 2a in $\tu{int}(\gamma)$; moreover, the internal rays $R_1$ and $R_2$ of $U$ landing at $z_1$ and $z_2$ respectively belong to $\gamma$. It implies that $\gamma$ contains a loop $R_1\cup R_2\cup \ol{\alpha}$, a contradiction. The claim follows.
	
	In the statements of (1) and (2), the two endpoints of $\gamma$ are either Fatou centers or buried points in $\mc{J}_f$. By the claim, we would get a contradiction if $\alpha$ and $\Omega$ exist.
	
	(3) By definition, we may assume $\gamma$ is clean with a Julia-type endpoint $w_1\in\partial \Omega_*$ and a Fatou-type endpoint $w_2\in\Omega_*$ such that $\gamma\setminus\{w_1\}\subseteq\Omega_*$ for a component $\Omega_*$ of $\tu{int}(K_{U_*})$. By assumption we have $U\neq U_*$ and $U\subseteq \Omega_*$. By the claim, either $z_1=w_1$ or $z_2=w_1$. We assume $z_1=w_1$. Then $\{U_*,U\}\subseteq \tu{Dom}(z_1)$. Since $\gamma$ is properly clean for $\Omega_*$ and $U\subseteq\Omega_*$, it approaches $z_1$ along an internal ray and hence $\alpha\cap \mc{F}_f\neq \es$. Statement (3) holds.
	
 (4) Let $\gamma$ be an $\Omega$-clean arc relative to $U_*$ in statement (4). If $U=U_*$, $\alpha\cap \mc{F}_f\neq \emptyset$ by definition.
	
	 If $U\neq U_*$, we assume $U\subseteq \Omega_*$ for some $\Omega_*\in\tu{Comp}(\tu{int}(K_{U_*}))$. Then $\partial\Omega\subseteq \partial U\subseteq \ol{\Omega}_*$ and so $\Omega\subseteq \Omega_*$. It follows that $\alpha$ belongs to the clean arc $\gamma_{\Omega_*}:=\gamma\cap \ol{\Omega}_*$. By the claim again, one of $z_1$ and $z_2$ must be an endpoint of $\gamma_{\Omega_*}$, say $z_1$. By definition $z_1\in \partial \Omega_*$ and then $\{U, U_*\}\subseteq\tu{Dom}(z_1)$. Thus $\gamma_{\Omega_*}$ approaches $z_1$ along an internal ray. This also implies $\alpha\cap\mc{F}_f\neq \es$. Statement (4) holds.
	 
	 Statement (5) follows immediately by the equality $\Omega\setminus(\wt{G}\cap \mc{J}_f)=\Omega\setminus(G\cap\mc{J}_f)$.
\end{proof}

\begin{lemma}\label{lem:graph-connectable}
		Let $G_1$ and $G_2$ be two regulated graphs. If the two sets $G_1\cap \mc{J}_f$ and $G_2\cap \mc{J}_f$ are disjoint but $G_1\cap G_2\neq \emptyset$, then $G_1\cup G_2$ is a regulated graph. Furthermore, if $G_1$ and $G_2$ are  connectable, then $G_1\cup G_2$ is also  connectable.
\end{lemma}
\begin{proof}
	 The condition implies that $G_1\cap G_2$ consists of only Fatou centers. For otherwise, since the intersection of a regulated graph with $\mc{F}_f$ is a union of internal rays, $G_1\cap G_2$ would contain an internal ray $R$; thus the landing point of $R$ would be also contained in $G_1\cap G_2$, a contradiction.
	
	Note that if a Fatou domain meets both $G_1$ and $G_2$, then its closure must join the two sets $G_1\cap \mc{J}_f$ and $G_2\cap \mc{J}_f$, and its diameter is thus greater than the positive number $\epsilon_0:=\tu{dist}(G_1\cap\mc{J}_f,G_2\cap\mc{J}_f).$
	By Lemma \ref{lem:new}\,(1), there are at most finitely many such Fatou domains. Thus the intersection $G_1\cap G_2$ is a finite set.	Therefore, $\wt{G}:=G_1\cup G_2$ is a regulated graph.
	
	To show $\wt{G}$ is connectable, we argue by contradiction and assume it is not true. By definition, there is a component $\Omega$ of $\tu{int}(K_U)$ such that $\wt{G}\cap \Omega$ contains either a loop $\beta\subset \mc{J}_f$ or an open arc $\beta\subset \mc{J}_f$ with $\tu{end}(\beta)\subset\partial \Omega$.   Since $G_i$ are connectable, we see that $\beta$ is neither a subset of $G_1$ nor that of $G_2$. Thus $\beta$ contains a point of $G_1\cap G_2$, which is a Fatou center as shown above. This contradicts $\beta\subset\mc{J}_f$. The proof is complete.

\end{proof}

\begin{lemma}\label{lem:hull-connectable}
	Let $H$ be a hull of finitely many components of $\tu{int}(K_U)$ and a finite set of semi-buried points in $\partial U$. Assume $\tu{int}(H)=\Omega_1\cup \cdots\cup \Omega_l$. Let $G_i$ be regulated graphs with $\ol{G_i\cap \Omega_i}=G_i$ for $1\leq i\leq l$. Suppose $G:=\left(H\setminus \ol{\tu{int}(H)}\right)\cup \bigcup G_i$ is a graph. Then $G$ is connectable if $G_i$ are connectable.
\end{lemma}

\begin{proof}
	We fix an arbitrary Fatou domain $U'$ and a component $\Omega'$ of $\tu{int}(K_{U'})$. The argument goes as follows.
	
	
	
	
	If $\Omega'\setminus(G\cap \mc{J}_f)$ is disconnected, let $W$ be a component of $\Omega'\setminus(G\cap\mc{J}_f)$. Then $\partial W$ contains either an arc $\beta\subset G\cap \mc{J}_f\cap\ol{\Omega'}$ with $\tu{end}(\beta)\subset \partial \Omega'$ and $\tu{int}(\beta)\subset\Omega'$, or a loop $\beta\subset G\cap \mc{J}_f\cap\Omega'$, or a loop $\beta\subset G\cap \mc{J}_f\cap\ol{\Omega'}$ with $\#(\beta\cap \partial{\Omega'})=1$.
	
	If $\beta$ is a loop, let $\beta'=\beta$. Otherwise, let $\beta'$ be the union of $\beta$ and an arc in $\partial\Omega'$ joining $\tu{end}(\beta)$. So $\beta'$ is a loop in $\mc{J}_f$. So one of the open disks that $\beta'$ bounds contains $U$ and the other is in a component $\Omega$ of $K_U$.
	
	
	Therefore, $\beta$ is contained in the set $G\cap\mc{J}_f\cap \ov{\Omega}$. If $\Omega=\Omega_i,i\in\{1,\ldots,l\}$, then $G\cap\ov{\Omega}=G_i$, and $\Omega'\setminus (G_i\cap\mc{J}_f)\,(\subseteq \Omega'\setminus(\beta\cap\mc{J}_f))$ is disconnected. It contradicts that $G_i$ is connectable. If $\Omega\not\in \{\Omega_1,\ldots,\Omega_l\}$, by the definition of hulls the intersection $G\cap\ov{\Omega}$ is an $\Omega$-clean arc, which is connectable according to Lemma \ref{lem:strong-regulated}(4). We also get a contradiction.
	
	Since we have proved that $\Omega'\sm(G\cap \mc{J}_f)$ is always connected for any Fatou domain $U'$ and any component $\Omega'$ of $\tu{int}(K_{U'})$, the graph $G$ is connectable by definition. 
\end{proof}

\subsection{Proof of Theorem \ref{thm:admissible}}
\begin{proof}[Proof of Theorem \ref{thm:admissible}]
	The special case is that $\mc{F}_f$ is connected. After a conformal conjugation, the map $f$ is a polynomial without bounded Fatou domains in $\mb{C}$. Let $T$ be the DH-regulated convex hull of $P\setminus\{\infty\}$ within the dendrite $\mc{J}_f$. Then $T$ is a tree with $\tu{end}(T)\subseteq P$. Let $\mc{V}$ be the union of $P$ and the branch points of $T$. Then the required graph $G=(\mc{V}, \mc{E})$ can be chosen as the union of $T$ and all external rays in the basin of infinity that land at points in $\mc{V}$.
	
	In what follows, we assume $\mc{F}_f$ has at least two components. The construction proceeds in several steps.	

\tb{Step 0.} Select marked Fatou domains. We decompose $P\cap \mc{J}_f$ into six disjoint subsets:
	\begin{itemize}
	\item[] $P_1=\{z\in P \tu{ is buried in }\mc{J}_f.\}$
	\item[] $P_2=\{z\in P: \#\tu{Acc}(z, U)\geq 2, \tu{Dom}(z)=\{U\}\tu{ with $U$ periodic}.\}$
	\item[] $P_3=\{z\in P \tu{ is of type 1a}, \tu{Dom}(z)=\{U\}\tu{ with $U$ periodic}.\}$
	\item[] $P_4=\{z\in P \tu{ is of type 1b},  \tu{Dom}(z)=\{U\}\tu{ with $U$ periodic}.\}$
	\item[] $P_5=\{z\in P: \tu{$\tu{Dom}(z)=\{U\}$ with $U$ preperiodic.}\}$
	\item[] $P_6=\{z\in P: \#\tu{Dom}(z)\geq 2.\}$
\end{itemize}


For a point $z\in P_1$, let $\Omega_{U,z}$ be the component of $\tu{int}(K_U)$ containing the point $z$ for a Fatou domain $U$. The finite intersection $\bigcap\,\Omega_{U,z}$, where $U$ runs over all periodic Fatou domains, is non-empty. Let $D_z$ be the component of the intersection containing the point $z$. We choose a Fatou domain $U_z$ in $D_z$ as a marked one associated to the point $z$. The collection of $U_z, z\in P_1$, is denoted by $\mc{U}_1$.

A point $z$ in $P_2$ disconnects $K_U$ into at least two components. We choose two Fatou domains from distinct components as marked domains. Let $\mc{U}_2$ be the collection of these marked domains, as the point $z$ runs over all points in $P_2$.

When $z\in P_3$, in the component of $\tu{int}(K_U)$ whose boundary contains $z$, we choose a Fatou domain as a marked one. Let $\mc{U}_3$ be the collection of them as $z$ is taken over all points in $P_3$.

Let $\mc{U}_4:=\bigcup_{z\in P_5\cup P_6}\tu{Dom}(z)$. Finally, the union $\mc{U}=\bigcup_{1\leq i\leq 4}~\mc{U}_i$ is the whole family of marked Fatou domains. By the way, since $\mc{F}_f$ has at least two Fatou domains, by adding some Fatou domains if necessary, we may assume that $\mc{U}$ contains $P\cap\mc{F}_f$ and $\#\mc{U}\geq 2$. Therefore, all periodic Fatou domains are contained in $\mc{U}$.
	
\tb{Step 1.} Connect the buried points in $P$.

For each point $z\in P_1$, there are the associated domains $D_z$ and $U_z$ chosen in Step 0. We draw an arc $\gamma_z\subset D_z$ joining $z$ to the center $c(U_z)$. We assume that the sets $\gamma_z\cap\mc{J}_f, z\in P_1$ are pairwise disjoint.

By Corollary \ref{cor:two_centers}, one can turn each $\gamma_z$ to a clean arc $\wt{\gamma}_z$ with the same endpoints as $\gamma_z$ such that
the two sets $\wt{\gamma}_z\cap \mc{J}_f$ and $\wt{\gamma}_w\cap \mc{J}_f$ are still disjoint for distinct $z,w\in P_1$. Clearly we can let $\wt{\gamma}_z\subset D_z$ by the choice of $D_z$.

Let $G_1:=\bigcup_{z\in P_1}\wt{\gamma}_z$. If $G_1$ is connected, then $G_1$ is a regulated and connectable graph by Lemmas \ref{lem:strong-regulated}\,(2) and \ref{lem:graph-connectable}.

\tb{Step 2.} Connect centers of the marked Fatou domains.
	
	Since $\mc{U}$ finite, we can pick a Jordan curve $\gamma$ which passes through all $c(U)$ for $U\in \mc{U}$.  We arrange these Fatou centers as $z_1, z_2,\ldots, z_n, z_{n+1}:=z_1$ in $\gamma$ in the cyclic order. As the set $\olC\setminus(G_1\cap \mc{J}_f)$ is connected, we may assume that $\gamma$ avoids $G_1\cap \mc{J}_f$.  According to Corollary \ref{cor:two_centers}, one can turn each $[z_k,z_{k+1}]_{\gamma}$, $1\leq k\leq n$,
	to a clean arc $\alpha_k$ joining $z_k$ and $z_{k+1}$, such that $$\alpha_i\cap G_1\cap \mc{J}_f=\es\tu{ and }(\alpha_i\cap\mc{J}_f)\cap (\alpha_j\cap\mc{J}_f)=\es\,\, \text{for }1\leq i\neq j\leq n.$$
By Lemmas \ref{lem:strong-regulated}\,(1) and \ref{lem:graph-connectable}, we obtain a connectable graph $G_2:=G_1\cup\bigcup_k\alpha_k$. The branch points of $G_2$ are Fatou centers, while the set of endpoints of $G_2$ coincides with $P_1$.

\tb{Step 3.} Turn periodic Fatou centers into non-cut points.
	
We enumerate the periodic Fatou domains as $U_0,\dotsc,U_{m}$. Let $G_2^{0}:=G_2$. This step is to inductively construct $G_2^{1},\ldots, G_2^{m+1}$ such that
  \begin{itemize}
 \item  [(i)] each $G_2^{k}$ is a connectable graph;
 \item [(ii)] each $G_2^k$ is the union of $G^0_2$, and $\tu{Acc}(z), z\in P_4\cap (\partial U_0\cup\cdots\cup \partial U_{k-1})$, and finitely many clean arcs with Fatou-type endpoints, and finitely many $\Omega$-clean arcs relative to $U_j$ for $0\leq j\leq k-1$; 
 \item [(iii)] the centers $c(U_0),\ldots,c(U_{k-1})$ are non-cut points for $G_2^{k}$;
 \item [(iv)] $\tu{end}(G_2^{k})\cap \mc{J}_f=P_1$;
 
 \item[(v)] each $G_2^k$ satisfies the joint condition.
   \end{itemize}

If we set $U_{-1}:=\emptyset$, then $G_2^0$ satisfies the above properties.
Since for a regulated graph $\G$
\begin{equation}\label{eq:non-cut}
\tu{the center $c(U_k)$ is a non-cut point $\Leftrightarrow$ $\G\setminus U_k~(=\G\cap K_{U_k})$ is connected,}
\end{equation}
to obtain $G_2^{k+1}$ with property (iii), we need to join all components of $G_2^k\cap K_{U_k}$ by clean arcs or $\Omega$-clean arcs relative to $U_k$.

Note that $G_2^0$ is formed by clean arcs whose Julia-type endpoints are $P_1$. From Lemma \ref{fact:1} and the inductive property (ii) for $G_2^k$, there are finitely many components $\Omega_1, \ldots, \Omega_l$ of $\tu{int}(K_{U_k})$ such that $\ol{\Omega_i}\cap G_2^k\neq \es$ and 
 $G_2^k\setminus U_k=\bigcup_{1\leq i\leq l}\ol{\Omega_i}\cap G_2^k$. Let $G_{\Omega_i}:=\ol{\Omega_i\cap G_2^k}$ and $G_{\Omega_i}':=\ol{\Omega_i}\cap G_2^k$. Note that $G_{\Omega_i}\subseteq G_{\Omega_i}'$ and $G_{\Omega_i}'\setminus G_{\Omega_i}$ (possibly empty) consists of finitely many points in $\partial \Omega_i$. 

\noindent\tb{Claim 1.}~\emph{Every component of $G_{\Omega_i}$ contains a Fatou center.}
\begin{proof}[Proof of Claim 1]
	If it is not true, assume $T$ is a component of $G_{\Omega_i}$ such that $T\subset \mc{J}_f$. Since $G_2^k$ is connectable, $T$ has no loops and so is a tree, moreover, the intersection $T\cap \partial \Omega_i$ contains at most one point. Thus there is an endpoint of $T$ contained in $\Omega_i$, say $z$. Clearly $z\in \tu{end}(G_2^k)\cap\mc{J}_f=P_1$ by property (iv).
	
	From the construction of $\wt{\gamma}_z$ in Step 1 for the buried point $z$, we known that $\wt{\gamma}_z\subset D_z\subset \Omega_i$. Thus $\wt{\gamma}_z$ is contained in a component of $G_{\Omega_i}$, which must be $T$. Hence $T$ contains the Fatou center $c(U_z)$, a contradiction.
\end{proof}
Let $H$ be a hull of $\Omega_1,\ldots,\Omega_l$ and $P_4\cap \partial U_k$ in $K_{U_k}$. For a component $\Omega$ of $\tu{int}(H)$, the set $\ol{H\setminus\ol{\Omega}}\cap\partial \Omega$ consists of finitely many points, which are precisely the cut points of $H$ within $\ol{\Omega}$. We denote them by $S_\Omega$.
There are two cases: either $\Omega\in\{\Omega_1,\ldots,\Omega_l\}$ or $\Omega\notin \{\Omega_1,\ldots,\Omega_l\}$.

If $\Omega\in\{\Omega_1,\ldots,\Omega_l\}$, similarly as Step 2, we first join all components of $G_{\Omega}$ by a Jordan curve $\gamma$ such that $\gamma\subset \Omega\setminus (G_\Omega\cap\mc{J}_f)$. This can be done by Claim 1. Then we piecewise turn $\gamma$ into clean arcs by Corollary \ref{cor:two_centers}. The union of these clean arcs and $G_\Omega$, still denoted by $G_{\Omega}$, is a connectable graph. If $G_\Omega=\es$, we just let $G_\Omega$ be a Fatou center in $\Omega$. For a point $z\in (S_\Omega\cup G_\Omega')\setminus G_\Omega$, we choose a properly clean arc $\gamma_z$ for $\Omega$ joining $z$ to a Fatou center in $G_{\Omega}$ by Proposition \ref{prop:prop}. One may assume $\gamma_z\setminus\{z\}\subset \Omega\setminus(G_\Omega\cap\mc{J}_f)$, since $G_\Omega$ is connectable. Finally we obtain a connectable graph $\wt{G}_\Omega\subset \ol{\Omega}$ with
\begin{equation}\label{eq:Omega}
\wt{G}_\Omega:=G_\Omega\cup\{\gamma_z: z\in (S_\Omega\cup G_\Omega')\setminus G_\Omega\}.
\end{equation}

If $\Omega\notin \{\Omega_1,\ldots,\Omega_l\}$, let $G_\Omega$ be an arbitrary Fatou center in $\Omega$. As above, we can obtain the properly clean arcs $\gamma_z$ for $\Omega$ joining the points of $S_\Omega$ to $G_\Omega$, and the graph $\wt{G}_\Omega$ is then defined as \eqref{eq:Omega}. Note that $\#S_\Omega\geq 3$ in this case.

Having finished these processes for each component of $\tu{int}(H)$, we define $G_2^{k+1}$ as the union of $G_2^k\cap U_k$ and $\bigcup_{z\in P_4\cap \partial U_{k}} R_z$, and the new graph
\begin{equation}\label{eq:subgraph}
\left(H\setminus\bigcup_{\Omega\in\mc{H}}\ol{\Omega}\right)\cup \bigcup_{\Omega\in\mc{H}} \wt{G}_{\Omega}
\end{equation}
where $\mc{H}$ is the collection of components of $\tu{int}(H)$ and $R_z$ is the unique ray in $\tu{Acc}(z)$. By construction, $\tu{end}(G_2^{k+1})\cap \mc{J}_f=P_1$.

Property (i) is clearly satisfied for $G_2^{k+1}$ by Lemmas \ref{lem:strong-regulated}, \ref{lem:graph-connectable} and \ref{lem:hull-connectable}. 
For property (ii), indeed, the added properly clean arcs $\gamma_z$ are $\Omega$-clean relative to $U_k$ by definition. Note that the closure of each component of $H\setminus\bigcup_{\Omega\in \mc{H}}\ol{\Omega}$ is a tree whose edges are $\Omega$-clean relative to  $U_k$ by definition of hulls. Hence property (ii) holds for $G_2^{k+1}$. Note that each $\wt{G}_\Omega, \Omega\in\mc{H}$ satisfies the joint condition at accessible points by property (v) for $G_2^{k}$. It follows from Lemma \ref{lem:joint} that the graph in \eqref{eq:subgraph} also satisfy the joint condition at each accessible point. Since $\tu{end}(G_2^{k+1})\cap\mc{J}_f=P_1$, property (v) holds for $G_2^{k+1}$  by definition.

 For property (iii), by construction and \eqref{eq:non-cut}, we know that $c(U_k)$ is a non-cut point of $G_2^{k+1}$. To see that this also holds for the points $c(U_0),\ldots,c(U_{k-1})$, one just need to check the following fact:
\begin{facts}\label{facts}
	Let $\Gamma$ be a graph and $\gamma$ be an arc such that $\tu{end}(\gamma)\subset \Gamma$. Suppose $\wt{\Gamma}=\Gamma\cup\gamma$ is a graph. If $z\in\Gamma$ is a non-cut point for $\Gamma$, then $z$ is also a non-cut point for $\wt{\Gamma}$.
\end{facts}

Finally, let us define $G_3:=G_2^{m+1}$. Then $G_3$ satisfies the inductive properties (i)-(v).

\tb{Step 4.} Connect the Julia-type points in $P\setminus (P_1\cup P_4)$.

First, each point $z\in P_2$ is already contained in $G_3$. Indeed, we assume $z\in\partial U$ and $U$ is periodic. By the choices of $\mc{U}_2$ in Step 0, the hull in $K_U$ constructed in Step 3 joins at least two components of $K_{U}\setminus\{z\}$, and thus it passes through the point $z$.

Now we connect $P_3$ to the graph $G_3$ by induction. Suppose $P_3:=\{z_1,\dotsc,z_l\}$ and $G_3^{0}:=G_3$. For $1\leq k\leq l$, we may assume that $z_k\in \partial \Omega$ and $\tu{Acc}(z_k)=\{R_k\}$ with $\Omega$ a component of $\tu{int}(K_U)$ and $U$ a periodic Fatou domain. If $z_k\in G_3^{k-1}$, we are done by letting $G_3^k=G_3^{k-1}$. Otherwise, since $G_3^{k-1}$ is connectable, there is a properly clean arc $\gamma_k$ for $\Omega$ joining $z_k$ to $c(U_{z_k})$, where $U_{z_k}$ is the marked domain associated to the point $z_k$. Note that $$\wt{\gamma}_k:=R_k\cup  \gamma_k$$ is a clean arc with its two Fatou-type endpoints in $G_3^{k-1}$. Let $G_3^k:=G_3^{k-1}\cup\wt{\gamma}_k$. Then by Lemma \ref{lem:strong-regulated}\,(1), Lemma \ref{lem:graph-connectable} and Fact \ref{facts}, $G_3^k$ is a connectable graph with no more additional cut points and endpoints.
Having finished the induction, we let $G_4$ be the graph $G_3^l$.

Finally, we define the graph $\wt{G}:=G_4\cup \bigcup \{\ol{R}: R\in \tu{Acc}(z), z\in P_2\cup P_5\cup P_6\}$, which contains the set $P$, with the vertex set $\wt{\mc{V}}$ chosen as
$\wt{\mc{V}}:=P\cup \{v: \tu{valence}(v, \wt{G})\geq 3\}.$ Then $P_1=\tu{end}(\wt{G})\cap \mc{J}_f$. Note that every Julia-type vertex $v\in \wt{\mc{V}}\setminus P$ is a branch point created in Step 3. Thus $\#\tu{Acc}(v)\geq 3$ by Lemma \ref{lem:joint}. We add the finitely many internal rays that land at the periodic vertices of $\wt{\mc{V}}\setminus P$ into $\wt{G}$. The resulting graph is denoted by $G$. By Lemma \ref{lema:facts}\,(6), this operation does not create new vertices for $G$. Thus conditions (C.4) and (C.5) hold for $G=(\mc{V}, \mc{E})$ with $\mc{V}:=\wt{\mc{V}}$.  

We now verify that $G=(\mc{V}, \mc{E})$ is admissible for $(f, P)$. Obviously, $G$ is regulated. By Fact \ref{facts} condition (C.2) holds for $G$. Since the graph $G_3$ satisfies the joint condition at every accessible point by Lemma \ref{lem:joint}, this property also holds for the graph $G$. Note that every buried point in $\mc{V}$ is an endpoint of $G$. Therefore, condition (C.3) holds. We conclude that the graph $G=(\mc{V}, \mc{E})$ is as required. The proof of Theorem \ref{thm:admissible} is complete.
\end{proof}

\section{Tiles and multi-accessible points}\label{sec_tiles}

In this section, we shall define $n$-tiles with respect to graphs which satisfy Assumption \ref{ass_1} and study some dynamical properties of the tiles and graphs. As an application, we prove Theorem \ref{thm:multi}. Throughout this section, we maintain the following assumption.

\begin{assumption}\label{ass_1}
	Let $f$ be a PCF and $G$ be a graph containing $\tu{Post}(f)$ such that it satisfies conditions (C.1) and (C.2) in Definition \ref{def:admissible}. The existence of such a graph is according to Theorem \ref{thm:admissible}.
\end{assumption}

\subsection{$n$-faces and $n$-sectors}\label{sec:tiles_const}
	Since $\tu{Post}(f)\subset G$, each iterated preimage $f^{-n}(G), n\geq 0$ is connected, or equivalently, each $X_n$ in $\tu{Comp}(\wh{\mb{C}}\setminus f^{-n}(G))$ is simply connected. We call $X_n$ an \emph{$n$-face} (with respect to $(f,G)$).

For every periodic Fatou domain $U$,
we choose a neighborhood $B_z=\phi_U^{-1}(\mb{D}_{1/2})$ of the point $z:=c(U)$, where $\phi_U$ is the B\"{o}ttcher coordinate of $U$. Clearly $f(B_z)\subseteq B_{f(z)}$.	
	Each $0$-face $X_0$ has two disjoint open subsets: $Y_0$ and $\mc{Z}_0$, defined by
	$$Y_0:=X_0\setminus\bigcup\ol{B_z}, \ \ \mc{Z}_0:=X_0\cap \bigcup B_z,$$
where the subscript $z$ goes through all periodic Fatou centers. The connected set $Y_0$ is called a \emph{$0$-subface} (of $X_0$), and
	 each component of ${\mc{Z}_0}$ is called a \emph{$0$-sector} (of $X_0$). The closure of a $0$-sector possibly equals the whole $\ol{B_z}$. Because $G$ satisfies (C.2), this happens if and only if $z\in\tu{end}(G)$.
	
	 We emphasize that sectors of $X_0$ belong to distinct Fatou domains.
	This is because periodic Fatou centers are non-cut points of $G$; see Figure \ref{fig:cutpoints} for a counter-example. The reason we require condition (C.2) in Assumption \ref{ass_1} is just to obtain this property .
	
	\begin{figure}
		\centering
		\begin{tikzpicture}
		\node at (0,0){\includegraphics[width=0.7\linewidth]{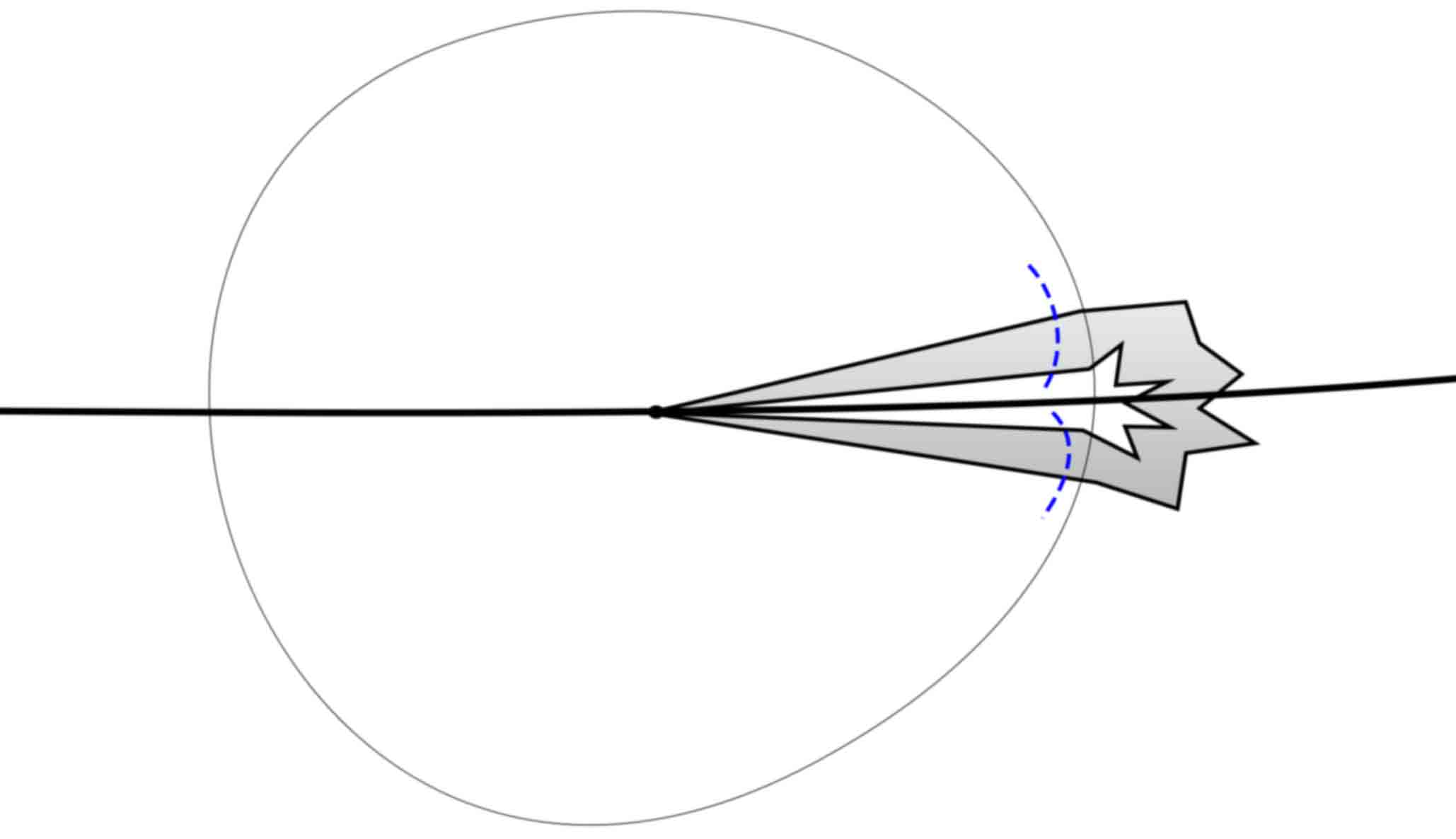}};
		\node at (-0.5,-0.25) {$z$};
		\end{tikzpicture}
		\caption{Counterexample: $z$ is a cut point, the shaded area is an $n$-face.}
		\label{fig:cutpoints}
	\end{figure}

	For $n\geq 1$, note that $f^n$ maps an $n$-face $X_n$ onto a $0$-face $X_0$ homeomorphically. This induces bijections
	$$f^n:(X_n,Y_n,\mc{Z}_n)\to (X_0,Y_0,\mc{Z}_0),$$
	where $Y_n$ and $\mc{Z}_n$ are the pullbacks of $Y_0$ and $\mc{Z}_0$ by $f^n|_{X_n}$, respectively. We call $Y_n$ an \emph{$n$-subface} of $X_n$ and a component of ${\mc{Z}}_n$ an \emph{$n$-sector} of the face $X_n$.

 For $\theta_1\neq \theta_2\in\R/\Z$, let $]\theta_1,\theta_2[$ denote the open segment of $\R/\Z$ consisting all angles $\theta$ such that $\theta_1,\theta$ and $\theta_2$ are in the counterclockwise cyclic order. The length of $]\theta_1,\theta_2[$ is denoted by $|\theta_2-\theta_1|$.

	By definition,  each $n$-sector $Z_n$ is compactly contained in a Fatou domain $U_n$. The boundary $\partial Z_n$ is the union of two segments of internal rays, say $R_{U_n}(\theta_n^-)$ and $R_{U_n}(\theta_n^+)$, satisfying that $R_{U_n}(\theta)$ intersects $Z_n$ for $\theta\in]\theta_n^-,\theta_n^+[$, together with a segment of an equipotential curve $\phi^{-1}_{U_n}(\partial\mb{D}_{r_n})$. Thus $Z_n$ can be characterized by $(U_n,\theta_n^-,\theta_n^+,r_n).$ If $f$ maps an $(n+1)$-sector $Z_{n+1}$ onto $Z_n$, we have
	\begin{equation}\label{sending}
	U_{n}=f(U_{n+1}),~~ |\theta_n^+-\theta_n^-|=\delta_{U_{n+1}}\cdot|\theta^+_{n+1}-\theta_{n+1}^-| \tu{ ~and~  }r_n=(r_{n+1})^{\delta_{U_{n+1}}},	
	\end{equation}
where $\delta_{U_{n+1}}:=\tu{deg}(f|_{U_{n+1}})$. The \emph{width} of a sector $Z_n$ is defined as
		$$\tu{width}(Z_n):=\sup_{ 0<r\leq r_n}\tu{diam}\,\phi_{U_n}^{-1}\{re^{2\pi \tb{i}\theta}:\theta\in]\theta^-_n,\theta_n^+[\,\}.$$

	\begin{lemma}\label{lemma:converge_faces}
Following the notation above, we have that
\begin{enumerate}
\item the maximum diameters among all the $n$-subfaces converge to zero as $n\to\infty$;
\item the maximum widths among all the $n$-sectors converge to zero as $n\to\infty$;
\item if the closures of $n_k$-sectors $Z_{n_k}$ converge in the sense of Hausdorff topology, then the limit is either a point in $\mc{J}_f$ or a closed internal ray.
\end{enumerate}
	\end{lemma}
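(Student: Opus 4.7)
The three parts can be tackled in turn.

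For part (1), each $n$-subface $Y_n$ is the unique connected component of $(f^n|_{X_n})^{-1}(Y_0)$ for some $0$-subface $Y_0$. Since $G$ contains $\tu{Post}(f)$ and the disks $D_z$ were used to cut out every periodic Fatou center from $Y_0$, the set $\ol{Y_0}$ can be arranged to sit compactly inside the orbifold-expansion region $V$ of Lemma \ref{lemma:shrinking_lemma_locally} (after, if necessary, enlarging the family of excised disks defining $V$ to also cover the strictly pre-periodic postcritical Fatou centers, which lie on $\partial Y_0 \subseteq G$). Covering $\ol{Y_0}$ by finitely many compact pieces of orbifold diameter at most $\delta$ and applying Lemma \ref{lemma:shrinking_lemma_locally} to each of their preimage components gives $\tu{diam}_{\mc{O}}(Y_n) = O(\lambda^{-n})$ uniformly in $Y_n$, and translating back to the spherical metric finishes this part.

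Part (2) proceeds by a dichotomy on the Fatou domain $U_n$ containing $Z_n$. Given $\epsilon > 0$, Lemma \ref{lemma:Dynamics_Components}(3) provides a finite family $\mc{U}_{\epsilon}$ of Fatou domains of diameter at least $\epsilon$. If $U_n \notin \mc{U}_{\epsilon}$, then $\tu{width}(Z_n) \leq \tu{diam}(U_n) < \epsilon$ trivially. If $U_n \in \mc{U}_{\epsilon}$, the pre-period of $U_n$ is uniformly bounded, and each complete traversal of a periodic superattracting Fatou cycle contributes a factor at least $2$ to the product $\prod_{k=1}^n \delta_{U_k}$ through the relations \eqref{sending}; hence this product tends to infinity uniformly over $U_n \in \mc{U}_{\epsilon}$, forcing the angular span $|\theta_n'' - \theta_n'|$ to zero. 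Combined with the uniform continuity of the Carath\'eodory extensions $\phi_U^{-1}:\ol{\mb{D}}\to\ol{U}$ for the finitely many $U \in \mc{U}_{\epsilon}$, the spherical diameter of every arc $\phi_{U_n}^{-1}(\{re^{2\pi i \theta} : \theta \in (\theta_n', \theta_n'')\})$ with $0 < r \leq r_n$ vanishes, giving $\tu{width}(Z_n) < \epsilon$ for all sufficiently large $n$.

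For part (3), let $\ol{Z_{k_n}}$ be a Hausdorff-convergent sequence. Passing to a subsequence, either $\tu{diam}(U_{k_n}) \to 0$, in which case $\ol{Z_{k_n}}$ collapses to a single point, necessarily in $\mc{J}_f$ (since a sequence of distinct Fatou domains of shrinking diameter accumulates on the Julia set); or, by the finiteness of large Fatou domains from Lemma \ref{lemma:Dynamics_Components}(3), $U_{k_n}$ stabilizes to a fixed Fatou domain $U$. In the latter case, part (2) forces $|\theta_{k_n}'' - \theta_{k_n}'| \to 0$ and, via \eqref{sending}, $r_{k_n} \to 1$; the two bounding radial subarcs of $R_U(\theta_{k_n}')$ and $R_U(\theta_{k_n}'')$ then converge together to a common closed internal ray $\ol{R_U(\theta)}$, while the outer equipotential arc collapses to its landing point, exhibiting this closed internal ray as the Hausdorff limit.

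The main obstacle will be part (2): one must establish uniform divergence of $\prod_k \delta_{U_k}$ over $U_n \in \mc{U}_{\epsilon}$, which requires combining finiteness of $\mc{U}_{\epsilon}$, the resulting uniform bound on pre-periods, and the superattracting nature of periodic Fatou cycles for PCF rational maps. The technical matching of excised disks in part (1) and the uniform Carath\'eodory continuity in part (2) also require care, but are routine once properly set up.
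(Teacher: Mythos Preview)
Your arguments for parts (2) and (3) are essentially the same as the paper's: the dichotomy on the diameter of the containing Fatou domain, the divergence of the degree product $\prod_k \delta_{U_k}$ combined with uniform continuity of the finitely many Carath\'eodory extensions, and the subsequence analysis in part (3) all match the paper's proof line for line.

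Part (1), however, diverges from the paper and contains a genuine gap. You invoke the \emph{locally} shrinking lemma (Lemma~\ref{lemma:shrinking_lemma_locally}) and assert that $\ol{Y_0}$ can be arranged to lie compactly inside the orbifold-expansion region $V$. This fails whenever there exist strictly pre-periodic postcritical Fatou centers: such a point lies on $\partial Y_0\subseteq G$ (only \emph{periodic} Fatou centers are excised in the definition of $Y_0$), yet it is a puncture of $\mc{O}=\wh{\mb{C}}\setminus\tu{Post}_{\mc{F}}(f)$ and is excluded from $V$. Your parenthetical about ``enlarging the family of excised disks defining $V$'' does not resolve this---enlarging those disks shrinks $V$ further, and in any case the orbifold metric blows up near such a puncture, so $\ol{Y_0}$ cannot be covered by finitely many pieces of bounded orbifold diameter.

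The paper avoids this difficulty by using the \emph{globally} shrinking lemma (Lemma~\ref{lemma:shrinking_lemma}) instead. It places small disks $B_k$ around \emph{every} postcritical point on $\partial Y_0$ (Julia-type and Fatou-type alike), observes that the degree of $f^n$ on any preimage component of $B_k$ is bounded independently of $n$ (since the itinerary meets each critical point at most once), applies Lemma~\ref{lemma:shrinking_lemma} to each $B_k$ and to the remaining compact set $W=\ol{Y_0\setminus\cup_k B_k}$ (which is disjoint from $\tu{Post}(f)$), and sums the resulting diameter bounds. This bounded-degree disk trick around postcritical points is the missing ingredient in your approach to part (1).
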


\begin{proof}
	(1) Any $0$-subface $Y_0$ intersects $\tu{Post}(f)$ only possibly in its boundary. We may assume $$\partial Y_0\cap \tu{Post}(f)=\{x_1,\dotsc,x_s\}.$$
	Choose disjoint open disks $B_k$, $1\leq k\leq s$, centered at $x_k$, such that $B_k\cap\tu{Post}(f)=\{x_k\}$. Then each $B_k^{-n}\in \tu{Comp}(f^{-n}(B_k))$ is a topological disk. Furthermore, the degree of the covering $f^n:B_k^{-n}\to B_k$ is bounded above by the constant $\prod_{c\in \tu{Crit}(f)}\tu{deg}_f(c)$, because the itinerary
	$$B_k^{-n}, f(B_k^{-n}),\dotsc,f^{n-1}(B_k^{-n})$$
	meets each critical point at most one time. By Lemma \ref{lemma:shrinking_lemma}, it holds that
	$$\tu{max}\left\{\tu{diam}\,B_k^{-n}, B_k^{-n}\in\tu{Comp}(f^{-n}(B_k))\right\}\to 0\tu{ as }n\to\infty.$$
	Consider the compact set $W:=\ol{Y_0\setminus\bigcup_{1\leq k\leq s} B_k}$. It is disjoint from $\tu{Post}(f)$. By Lemma \ref{lemma:shrinking_lemma} again, the diameters of the components of $f^{-n}(W)$ tend to $0$, as $n\to \infty$.
	Since $Y_n$ is mapped onto $Y_0$ homeomorphically, $Y_n$ is covered by just one component of $f^{-n}(W)$ and one component of  $f^{-n}(B_k)$ for each $1\leq k\leq s$.
	Combining  the results above, we have
	$$\tu{diam}\,Y_n\leq \tu{max}\left\{\tu{diam}\,W^{-n}\right\}+\sum_{1\leq k\leq s}\tu{max}\left\{\tu{diam}\,B_k^{-n}\right\}\to 0\tu{ as }n\to\infty,$$
	where $W^{-n}$ and $B_k^{-n}$ go over all components of $f^{-n}(W)$ and $f^{-n}(B_k)$, respectively. Thus $(1)$ is proved.
	
(2) Let $\epsilon$ be a real number with $0<\epsilon<1$. Let $U_1,\ldots, U_{n_\epsilon}$ be the finitely many Fatou domains with diameters $\geq \epsilon$.
	Lemma \ref{lem:new}\,(2) implies that by adding finitely many Fatou domains if necessary, one may assume $$f(U_1\cup\cdots\cup U_{n_\epsilon})\subseteq U_1\cup\cdots\cup U_{n_\epsilon}.$$	
	Since $\phi_{U_k}^{-1}$ are uniformly continuous on $\ol{\mathbb{D}}$, there is a positive number $\eta_\epsilon$ such that the width of a sector $Z=(U_k,\theta^-,\theta^+,r)$ with
	$|\theta^+-\theta^-|<\eta_\epsilon$  and $1\le k\le n_\epsilon$
	is less than $\epsilon$.

Suppose that we are given an $n$-sector $Z_n=(U,\theta^-,\theta^+,r)$. If $U\notin\{U_1,\dotsc,U_{n_\epsilon}\}$, we have
	$\tu{width}(Z_n)\leq \tu{diam}\,U< \epsilon.$
	Otherwise, the itinerary of $U$ under $f$ is always contained in $\{U_1,\dotsc,U_{n_\epsilon}\}$. Since every Fatou domain cycle is superattracting, there is an integer $N$ such that,
	\begin{equation}\label{eq:enlarge}
	\tu{deg}(f^N|_{U'})\geq \tu{max}\left\{\frac{1}{\eta_\epsilon},\, \frac{\tu{ln}\,1/2}{\tu{ln}\,(1-\epsilon)}\right\}\, \, \forall \,\,U'\in\{U_1,\dotsc,U_{n_\epsilon}\}.
	\end{equation}
	For $n\geq N$,
by applying formulas \eqref{sending} and \eqref{eq:enlarge} to $f^n$, we have
	\begin{equation}\label{angles}
     |\theta^+-\theta^-|\leq \frac{1}{\tu{deg}(f^n|_U)}\leq \eta_\epsilon\tu{\ \ and \ \ }	r=\left(\frac{1}{2}\right)^{1/\tu{deg}(f^n|_{U})}\geq 1-\epsilon.
	\end{equation}
	From the choice of $\eta_\epsilon$, we have $\tu{width}(Z_n)<\epsilon$ when $n\geq N$.
	So statement $(2)$ follows.
	
	(3) We may assume $Z_{n_k}=(U_{n_k},\theta_{n_k}^-,\theta_{n_k}^+,r_{n_k})$. If $\tu{diam}\,U_{n_k}\to 0$ as $k\to\infty$, clearly the limit $Z$ is a point in $\mc{J}_f$. Otherwise, by discarding finitely many $Z_{n_k}$, one may assume that $U_{n_k}=U$ for all $k$. From inequality \eqref{angles}, it holds that
	$$|\theta_{n_k}^+-\theta_{n_k}^-|\to 0\tu{ and }r_{n_k}\to 1\tu{ as }k\to\infty.$$
	Since $Z_{n_k}$ is convergent in the sense of Hausdorff topology, we have $\lim\limits_{k\to\infty}\theta_{n_k}^-=\lim\limits_{k\to\infty}\theta_{n_k}^+=:\theta$. Therefore $\ol{R_U(\theta)}\subseteq Z$.
	
On the other hand, by condition, for any $z\in Z$, there exists a sequence $z_{n_k}\in Z_{n_k}$ tending to $z$ as $k\to\infty$. Each $z_{n_k}$ lies in an internal ray, say $R_{U}(\eta_{n_k})$. Then  $\lim\limits_{k\to\infty}\eta_{n_k}=\theta$ as $\eta_{n_k}\in[\theta_{n_k}^-,\theta_{n_k}^+]$. It follows that $z\in \lim\limits_{k\to\infty}\ol{R_U(\eta_{n_k})}=\ol{R_U(\theta)}$. Therefore, $Z\subseteq\ol{R_U(\theta)}$. Statement (3) is proved.
\end{proof}

\subsection{From $n$-faces to $n$-tiles}
We still assume $(f,G)$ satisfies Assumption \ref{ass_1}.
	Suppose that we are given a positive number $\epsilon_0$. Then, according to Lemma \ref{lemma:converge_faces}\,(1), there exists an integer $n_0$ such that the diameter of any $n$-subface $Y_n$ with level $n\geq n_0$ is less than $\epsilon_0$. Thus, if $\epsilon_0$ is small enough, there is a unique component $Q$ of $\wh{\mb{C}}\setminus\ol{Y_n}$, such that $\tu{diam}\,Q\geq \epsilon_0.$
Since $Y_n$ is simply connected and has locally connected boundary, then $Q$ is a Jordan domain; see \cite[Proposition 2.3]{DH2}.
	We call the closed Jordan domain $\wh{Y}_n:=\wh{\mb{C}}\setminus Q$ an \emph{$n$-subtile} for $(f,G)$. Let $X_n$ be the $n$-face associated to $Y_n$, and $Z_1,\dotsc,Z_s$ be  the $n$-sectors of $X_n$ lying outside of $\wh{Y}_n$. We then set
	$$\wh{\mc{Z}}_n:=\ol{Z_1}\cup\cdots\cup \ol{Z_s}\tu{\ \ and\ \ }\wh{X}_n:=\wh{Y}_n\cup\wh{\mc{Z}}_n.$$
	Since $Z_1,\dotsc,Z_s$ come from distinct Fatou domains, $\wh{X}_n$ is still a closed Jordan domain,  called an \emph{$n$-tile} for $(f,G)$. The closed sets $\ol{Z_1},\ldots,\ol{Z_s}$ are called the \emph{$n$-sectors} of $\wh{X}_n$. Sometimes we may also call the triple $(\wh{X}_n,\wh{Y}_n,\wh{\mc{Z}}_n)$  an \emph{$n$-tile}. By definition, the $n$-faces, $n$-subfaces and $n$-sectors of faces are all open sets, while the  $n$-tiles, $n$-subtiles and $n$-sectors of tiles are all closed.

From this definition, it is clear that the interiors of any two $n$-tiles (resp. $n$-subtiles) are either disjoint or nested.
We are mainly interested in the \emph{maximal} $n$-tiles (resp. $n$-subtiles), in the sense that: a maximal $n$-tile (resp. $n$-subtile) is not contained in any $n$-tile (resp. $n$-subtile) except itself. The collection of maximal $n$-tiles is denoted by $\wh{\tb{X}}_n$.

In order to describe the shapes of tiles, we introduce the following definition
$$\rho(\wh{X}_n):=\tu{max}\left\{\tu{diam}\,\wh{Y}_n,\tu{width}(\wh{Z}_n), \wh{Z}_n\in\tu{Comp}(\wh{\mc{Z}}_n)\right\}.$$
The following result is a direct consequence of the construction of $n$-tiles and Lemma \ref{lemma:converge_faces}. We omit the proof.

\begin{lemma}\label{lemma:n_tiles}
Let $n\geq n_0$ such that $n$-tiles can be defined. Then the following statements hold.
\begin{enumerate}
\item $\partial \wh{X}_n\subseteq f^{-n}(G)$.
\item $\wh{\tb{X}}_n$ covers the whole sphere.
\item $\tu{max}\left\{\rho(\wh{X}_n),\wh{X}_n\in \wh{\tb{X}}_n\right\}\to0$ as $n\to\infty$.
\item Suppose that we are given a ray $R:=R_U(\theta)$ and an arbitrarily small interval $]\theta_\epsilon^-,\theta_\epsilon^+[$ including $\theta$. Then there exists an integer $n_1$ such that, for any $n\geq n_1$ and any $n$-tile $(\wh{X}_n,\wh{Y}_n,\wh{\mc{Z}}_n)\in\wh{\tb{X}}_n$, there is at most one $n$-sector in $\wh{\mc{Z}}_n$  contained in $U$, and the relationship between $R$ and $\wh{X}_n$ is just one of the following:
	\begin{itemize}
	\item[(4.1)] $\wh{X}_n\cap R=\es$, thus $\wh{X}_n\cap U=\es$;
	
	\item[(4.2)]  $\wh{X}_n\cap R=\{c(U)\}$;
	
	\item[(4.3)]  $R\subset\partial \wh{X}_n$. Moreover, there exists another $n$-tile $\wh{X}_n'$, such that $$\wh{X}_n\cap U=\bigcup_{\eta\in [\theta^-, \theta]}R_U(\eta)\tu{ and }\wh{X}_n'\cap U=\bigcup_{\eta\in[\theta, \theta^+]}R_U(\eta)$$ with $\theta\in\,]\theta^-,\theta^+[\,\Subset\,]\theta_\epsilon^-,\theta_\epsilon^+[$ by exchanging $\wh{X}_n$ and $\wh{X}_n'$ if necessary. 
	
	\item[(4.4)] $R\setminus\{c(U)\}\subseteq\tu{int}(\wh{X}_n)$. Moreover, $\wh{X}_n\cap U\subseteq \bigcup_{\eta\in[\theta_\epsilon^-, \theta_\epsilon^+]}R_U(\eta)$.
	\end{itemize}
\end{enumerate}
\end{lemma}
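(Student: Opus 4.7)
The plan is to verify each of the four assertions in turn, with most of the work being bookkeeping built on top of Lemma \ref{lemma:converge_faces}.

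For (1), I would trace boundaries through the construction. The boundary of the $n$-face $X_n$ lies in $f^{-n}G$ by definition, and by (A1) the part of $\partial X_n$ lying inside any Fatou domain consists of internal rays; in particular, the parts of $\partial X_n$ that sit inside some $D_z$ are rays, hence are inherited by $\partial(\wh{\mc{Z}}_n)$ (which also includes pieces of the equipotentials $\partial D_z$). The subtile $\wh{Y}_n$ is obtained by filling in the small complementary components of $\ol{Y_n}$, so its boundary lies in $\partial\ol{Y_n}\subseteq \partial X_n\cup(\cup_z\partial D_z)$. When we glue $\wh{Y}_n$ and $\wh{\mc{Z}}_n$ along the common equipotential arcs, those arcs become interior to $\wh{X}_n$, leaving $\partial\wh{X}_n$ inside $f^{-n}G$. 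For (2), every point of $\wh{\mb{C}}$ is either in some $n$-face (hence in the associated tile), or in $f^{-n}G\subseteq \bigcup\partial\wh{X}_n$, or on some $\partial D_{f^{-n}(z)}$ (absorbed into a tile through either $\wh{Y}_n$ or some $\ol{Z_i}$); the passage to maximal tiles then covers $\wh{\mb{C}}$.

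For (3), Lemma \ref{lemma:converge_faces}(2) handles the sector-width part directly. For the subtile part I would argue that once $\tu{diam}(\ol{Y_n})<\epsilon$ (which holds for large $n$ by Lemma \ref{lemma:converge_faces}(1)), $\ol{Y_n}$ is contained in some open disk $D$ of diameter comparable to $\epsilon$; then $\wh{\mb{C}}\setminus D$ is connected, unbounded, and sits inside the large complementary component $Q$, so $\wh{Y}_n=\wh{\mb{C}}\setminus Q\subseteq D$ and $\tu{diam}(\wh{Y}_n)\lesssim\epsilon$. Combining the two bounds yields $\rho(\wh{X}_n)\to 0$ uniformly over maximal tiles.

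For (4), the first observation is that sectors of the same $n$-face sit in distinct Fatou domains (this is the point of (A2) and the relation (\ref{eq:distinct}) lifted to level $n$ via the conformal map $f^n$), so at most one $n$-sector of $\wh{X}_n$ can lie in $U$. Next, pick $n_1$ by Lemma \ref{lemma:converge_faces}(2) together with (\ref{angles}), (\ref{radius}) large enough that any $n$-sector with $n\geq n_1$ lying in $U$ has angular width less than $|\theta_\epsilon^+-\theta_\epsilon^-|$ and B\"ottcher radius $r_n$ close enough to $1$ that any such sector, when its angular interval contains $\theta_0$, entirely swallows the ``outer" piece of $R_0$. Then I would case-split on the combinatorial position of $\theta_0$ relative to the sector's angular interval $(\theta_n',\theta_n'')$: if $\wh{X}_n$ has no sector in $U$ then $\wh{X}_n\cap U$ is either empty or reduces to $\{c(U)\}$ depending on whether $c(U)\in\wh{Y}_n$, yielding (4.1) or (4.2); if $\theta_0\notin[\theta_n',\theta_n'']$ we again get (4.1) or (4.2) according to whether $c(U)\in\wh{X}_n$; if $\theta_0\in\{\theta_n',\theta_n''\}$ then $R_0$ lies on a boundary ray of the sector, and the neighboring $n$-tile across that ray supplies the second piece forcing (4.3); and if $\theta_0\in(\theta_n',\theta_n'')$ then the whole $R_0\setminus\{c(U)\}$ is interior to the sector, hence to $\wh{X}_n$, giving (4.4). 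The containment $[\theta_n',\theta_n'']\Subset(\theta_\epsilon^-,\theta_\epsilon^+)$ in (4.3) is exactly the angular-width estimate; the fact that $R_U(\theta_n'),R_U(\theta_n'')$ land at distinct points comes from these two rays being part of $f^{-n}G$ which forbids them to coincide in a single tile boundary. The anticipated obstacle is case (4.3), where one must produce the adjacent tile $\wh{X}_n'$ on the other side of $R_0$ and check that its sector in $U$ has the complementary angular interval, rather than some unrelated configuration; this is where control on $r_n\to 1$ from (\ref{radius}) becomes essential.
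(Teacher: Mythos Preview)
The paper states this lemma without proof, introducing it with the sentence ``Due to the construction of $n$-tiles and Lemma~\ref{lemma:converge_faces}, we summarize the following results.'' Your sketch supplies precisely the bookkeeping the paper omits, and your treatment of (1)--(3) is correct; in particular, the disk-trapping argument in (3) (that once $\tu{diam}(\ol{Y_n})<\epsilon$, the complement of a small disk is connected, hence sits in $Q$, forcing $\wh{Y}_n$ into that disk) is the right way to pass from $\tu{diam}(Y_n)\to 0$ to $\tu{diam}(\wh{Y}_n)\to 0$. The case split in (4) on the position of $\theta_0$ relative to the sector's angular interval, combined with the angular-width and radius estimates \eqref{angles}, \eqref{radius}, is also the intended mechanism.

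One point in (4.3) deserves a second look. You justify that $R_U(\theta_1)$ and $R_U(\theta_2)$ land at distinct points by saying these two rays lie in $f^{-n}G$, ``which forbids them to coincide in a single tile boundary''. But $\theta_1$ and $\theta_2$ are the \emph{outer} boundary angles of the two \emph{different} adjacent tiles $\wh{X}_n$ and $\wh{X}_n'$, not two boundary rays of one tile, so the single-tile reasoning does not apply as written. (Even for a single tile the claim needs care: nothing a priori prevents an $n$-tile from being a closed pie slice whose two bounding rays coland.) Since the paper itself records no argument here, the intended justification for this sub-claim is not spelled out; fortunately the applications of (4.3) in the paper (Lemma~\ref{lem:property_G_0}, Proposition~\ref{prop:epsilon_neighbor}) rely on the angular containment $[\theta_1,\theta_2]\Subset(\theta_\epsilon^-,\theta_\epsilon^+)$ rather than on this landing-point assertion, so the gap is cosmetic rather than structural.
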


\subsection{Dynamical properties of graphs}

\begin{lemma}\label{lem:property_G_0}
	Let $(f,G)$ satisfy Assumption \ref{ass_1}. Let $z$ be a point in $\mc{J}_f$ such that $\#\tu{Acc}(z,U)\geq 2$ for a Fatou domain $U$. Let $z_n:=f^n(z)$.
	Then for each sufficiently large integer $n$, we have $z_n\in G$. Moreover, if the orbit of $z$, i.e., $\{z_0, z_1, \ldots\}$, avoids $\tu{Crit}(f)$, then
	$\#\tu{Acc}(z,U)\leq \tu{valence}(z_n,G).$
\end{lemma}

\begin{proof}
Since $\tu{Post}(f)\subseteq G$, we just consider the case that the orbit of a point $z$ avoids $\tu{Crit}(f)$.
Let $s$ be an integer such that $2\leq s\leq\#\tu{Acc}(z, U)$.
We choose $s$ rays $R_i:=R_U(\theta_i)$ in $\tu{Acc}(z,U)$. They divide $\olC\setminus\{z\}$ into $s$ mutually disjoint Jordan domains $D_i$. We list these rays and domains in the cyclic order:
	$$R_0(=R_{s}), D_0(=D_{s}), R_1, D_1,\ldots, R_{s-1}, D_{s-1}.$$
Suppose we are given a ray $R_i$. By Lemma \ref{lemma:n_tiles}\,(4), for each sufficiently large $n$, any $n$-tile $\wh{X}_n$ that contains $R_i$  is disjoint from $R_j\sm\{c(U)\}$ for $j\neq i$. Let $\wh{X}_n$ be such a tile. We assume $\wh{X}_n\cap U=\bigcup_{\eta_1\leq \eta\leq\eta_2} R_U(\eta)$.
	One of the following three cases happens:
	
	Case 1: $\theta_i=\eta_1$. Then $w:=\phi^{-1}_U(e^{2\pi\tb{i}\eta_2})\in D_i$. The set $\gamma_i:=\partial\wh{X}_n\sm U$ is an arc joining $w$ to the point $z$ with $\gamma_i\setminus\{z\}\subset D_i$.

	Case 2: $\theta_i=\eta_2.$ Then $\phi^{-1}_U(e^{2\pi \tb{i}\eta_1})\in D_{i-1}$. By Lemma \ref{lemma:n_tiles}\,(4.3), there exists another $n$-tile $\wh{X}_n'$ whose boundary contains $R_i$. We assume $\wh{X}'_n\cap U=\bigcup_{\eta\in[\eta_2, \eta_1']} R_U(\eta)$. Then similar to Case 1, we obtain an arc $\gamma_i:=\partial\wh{X}'_n\setminus U$ joining $w:=\phi_U^{-1}(e^{2\pi i\eta_1'})$ to the point $z$ with $\gamma_i\setminus\{z\}\subset D_i$.
	
	Case 3: $\eta_1<\theta_i<\eta_2$. Let $w':=\phi^{-1}_U(e^{2\pi\tb{i}\eta_1})\in D_{i-1}$ and $w:=\phi^{-1}_U(e^{2\pi\tb{i}\eta_2})\in D_{i}$. Then the arc $\alpha:=\partial\wh{X}_n\sm U$ joins $w$ and $w'$. So $z\in \alpha$ because $s\geq 2$ implies $D_{i-1}\neq D_{i}$. Since in this case $\alpha$ intersects $R_1, \ldots, R_s$ in just one point $z$, the arc $\gamma_i:=[w,z]_\alpha$ satisfies that $\gamma_i\setminus\{z\}\subset D_i$.
	
As a consequence, for each sufficiently large integer $n$, we obtain arcs $\gamma_1,\dotsc,\gamma_s$ with $\gamma_i\setminus\{z\}\subseteq D_i$ and $\gamma_i\subset f^{-n}(G)$. Locally consider the orientation-preserving homeomorphism $f^n|_{B_n}$ on a neighborhood $B_n$ of the point $z$. By choosing subarcs of $\gamma_i$ incident to $z$ if necessary, one may assume all arcs $\gamma_i$ belong to $B_n$. Then the images $f^n(\gamma_i)\setminus\{z_n\}, 1\leq i\leq s$, are $s$ disjoint semi-open arcs in the graph $G$ incident to the point $z_n$. Thus $z_n\in G$ and $s\leq \tu{valence}(z_n, G)$. The proof is complete.
\end{proof}

\begin{corollary}\label{cor:admissible}
	Let $(f,G)$ satisfy Assumption \ref{ass_1}. Let $z$ be a point in $\mc{J}_f$ such that $\#\tu{Acc}(z,U)\geq 2$ for a Fatou domain $U$ and the orbit of $z$ avoids $\tu{Crit}(f)$. Let $z_n:=f^n(z)$, $U_n:=f^n(U)$ and $S_n:=f^n( \bigcup\,\{\ol{R}: R\in\tu{Acc}(z, U)\} )$. Then for each sufficiently large integer $n$, we have $z_n\in G$ and every component $W$ of $\olC\setminus S_n$ satisfies that $W\cap G$ contains either a vertex of $G$ or the open arc $\tu{int}(R_*)$ for an internal ray $R_*$ of $U_n$.
\end{corollary}
\begin{proof}
Let $s:=\#\tu{Acc}(z, U)$. From the proof of Lemma \ref{lem:property_G_0}, we see that for each sufficiently large integer $n$, the image $f^{n}(B_{n})$ is a neighborhood of $z_n$ and every component of $f^{n}(B_{n})\setminus S_n$ contains a semi-open arc from $f^n(\gamma_i)\sm\{z_n\}, 1\leq i\leq s$. Thus all components of $\olC\sm S_n$ meet $G$.
	
	If one of these components, say $W$, contains no vertex of $G$, then a component $C$ of $W\cap G$, whose closure contains $z_n$, lies in an edge $e$ of $G$. Thus $\ol{C}$ is an arc joining $z_n$ to another point $w$. Clearly $w\in\partial W$, for otherwise, $w\in\tu{end}(G)\subseteq\mc{V}$. Hence $w=c(U_n)$ and $C=]z_n, c(U_n)[\,_e$. Since $e\cap U_n$ consists of internal rays, then $C\cap U_n$ is the interior of an internal ray of $U_n$, which is as required. The proof is complete.
\end{proof}	

\subsection{Proof of Theorem \ref{thm:multi}}
\begin{proof}[Proof of Theorem \ref{thm:multi}]
	By Theorem \ref{thm:admissible} there is a graph $G=(\mc{V},\mc{E})$ satisfying Assumption \ref{ass_1}. We now consider a multi-accessible point $z$, i.e., $\#\tu{Acc}(z)\geq 3$, such that its orbit $z_n:=f^n(z), n\geq 0$, is disjoint from $\tu{Crit}(f)$.	By Lemma \ref{lema:facts}\,(1), for each sufficiently large integer $n$ the number $\#\tu{Dom}(z_n)$ is a constant independent of $n$, say $m$. There are three cases:
	
	Case $1$: $m\geq 3$. By Lemma \ref{lem:new}\,(2), the point $z$ eventually falls into the set
	$$S:=\{w\in \mc{J}_f: \tu{Dom}(w)\tu{ contains at least three periodic Fatou domains}.\}$$
	By Lemma \ref{lemma:intersection_three_component}, $S$ is a finite set. Thus the point $z$ is eventually periodic in this case.
	
	Case $2$: $m=1$. Then at least three rays in $\tu{Acc}(z)$ are eventually iterated into the unique Fatou domain of $\tu{Dom}(z_{n_0})$ for some $n_0$. By Lemma \ref{lem:property_G_0}, $z_{n_0+n}$ are branched points of $G$ for all large integers $n$. Since $G$ has at most finitely many branch points, the point $z$ eventually falls into a cycle of $\mc{V}$.

	Case $3$: $m=2$. If one of $\tu{Dom}(z_{n_0})$ contains more than two rays of $\tu{Acc}(z_{n_0})$ for an $n_0$, then by the argument in Case 2, the point $z$ is eventually iterated into a cycle of $\mc{V}$.	
	Thus we are reduced to the case that for some $n_0$ the only two domains $U$ and $U'$ in $\tu{Dom}({z_{n_0}})$ are periodic with a common period, say $p$, and each of them contains at most two rays from $\tu{Acc}(z_{n_0+kp})$ for all $k\geq 0$.
 Since the points $z_{n_0+kp}$ are multi-accessible, we may assume that $$\tu{Acc}(z_{n_0+kp},U)=\{R_k,R_k'\}\tu{ and }U'\subseteq \Omega\in\tu{Comp}(\tu{int}(K_{U})).$$
	
	Note that $z_{n_0+kp}\in\partial U'\cap\partial \Omega\subseteq \partial U\cap\partial U'$. Let $W_k$ be the component of $\olC\sm\ol{R_k\cup R_k'}$ that is disjoint from $U'$. Obviously $W_k\subseteq \olC\setminus\ol{\Omega}$. Then either $$W_{k_1}\cap W_{k_2}=\es\tu{ or }W_{k_1}=W_{k_2}$$
	for $ k_1\neq k_2$ and the latter happens if and only if $z_{n_0+k_1p}=z_{n_0+k_2p}$. Since $\#\mc{V}<\infty$ and $\tu{valence}(c(U), G)<\infty$, by Corollary \ref{cor:admissible} there are at most finitely many disjoint domains $W_k$. It follows that the point $z$ is eventually periodic.

	The above arguments show that multi-accessible points are eventually periodic and they are iterated into one of the four sets: $\tu{Post}(f)$, $S$, $\mc{V}$ and $\wt{S}$. Here $\wt{S}$ is the union of all periodic points $w\in\mc{J}_f$ satisfying that each point $x$ in the cycle $\{w, f(w), f^2(w), \ldots\}$ has $\#\tu{Dom}(x)=2$ with $\#\tu{Acc}(x, U_x)=2$ and $\#\tu{Acc}(x, U'_x)\leq 2$. Obviously, the domains $U_x$ and $U_x'$ are periodic.

	To complete the proof of Theorem \ref{thm:multi}, we just need to check the finiteness of $\wt{S}$. If it is not finite, there exist infinitely many points $w_k\in \wt{S}$ and two Fatou domains $U$ and $U'$ such that $w_k\in \partial U\cap\partial U'$ and $\#\tu{Acc}(w_k, U)=2$. These $w_k$ induce mutually disjoint Jordan domains $W_k$ as stated in Case 3. Since $w_k$ is periodic, by Corollary \ref{cor:admissible} each $W_k$ contains either a vertex of $G$ or an internal ray in $U\cap W\cap G$. Then
	either $\#\mc{V}=\infty$ or $\tu{valence}(c(U), G)=\infty$. We arrive at a contradiction. 
\end{proof}

\begin{remark}\tu{
There are some Julia sets that contain infinitely many periodic points with $\#\tu{Acc}(z)=2$; for examples: the Julia set of $f(z)=z^2$ and the airplane in Figure \ref{fig:airplane}.
The results in Theorem \ref{thm:multi}  are known for postcritically finite polynomials; see \cite{Ki,Poi}. Here we generalize them to the rational situation.}
\end{remark}

\section{Approximating edges with Julia-type endpoints}\label{sec_homotopy}
As discussed in the introduction, the first step towards the construction of invariant graphs is to find a graph $G_0$ such that each edge of $G_0$ can be arbitrarily approximated by an arc in $f^{-n}(G_0)$ for large $n$. This section is devoted to proving this point for some edges of admissible graphs; see the following proposition.

\begin{proposition}\label{prop:epsilon_neighbor}
	Let $f$ be a PCF. Let $P$ be a finite and $f$-invariant set containing the set $P_0$ in Theorem \ref{thm:invariant}. Let $G=(\mc{V},\mc{E})$ be an admissible graph for $(f, P)$. Let $\gamma$ be a regulated arc such that it joins two Julia-type endpoints $z_1$ and $z_2$ and satisfies the joint condition. Assume $z_1$ and $z_2$ each eventually fall into a cycle in $G$. Then, for any $\epsilon>0$ there exists an arc $\wt{\gamma}\subseteq f^{-n}(G)$ joining $z_1$ and $z_2$ such that $\wt{\gamma}\subset \mc{N}(\gamma,\epsilon)$ for each sufficiently large integer $n>0$.
\end{proposition}

\begin{figure}[h]
	\centering
	\begin{tikzpicture}
	\node at (0,0){\includegraphics[width=0.85\linewidth]{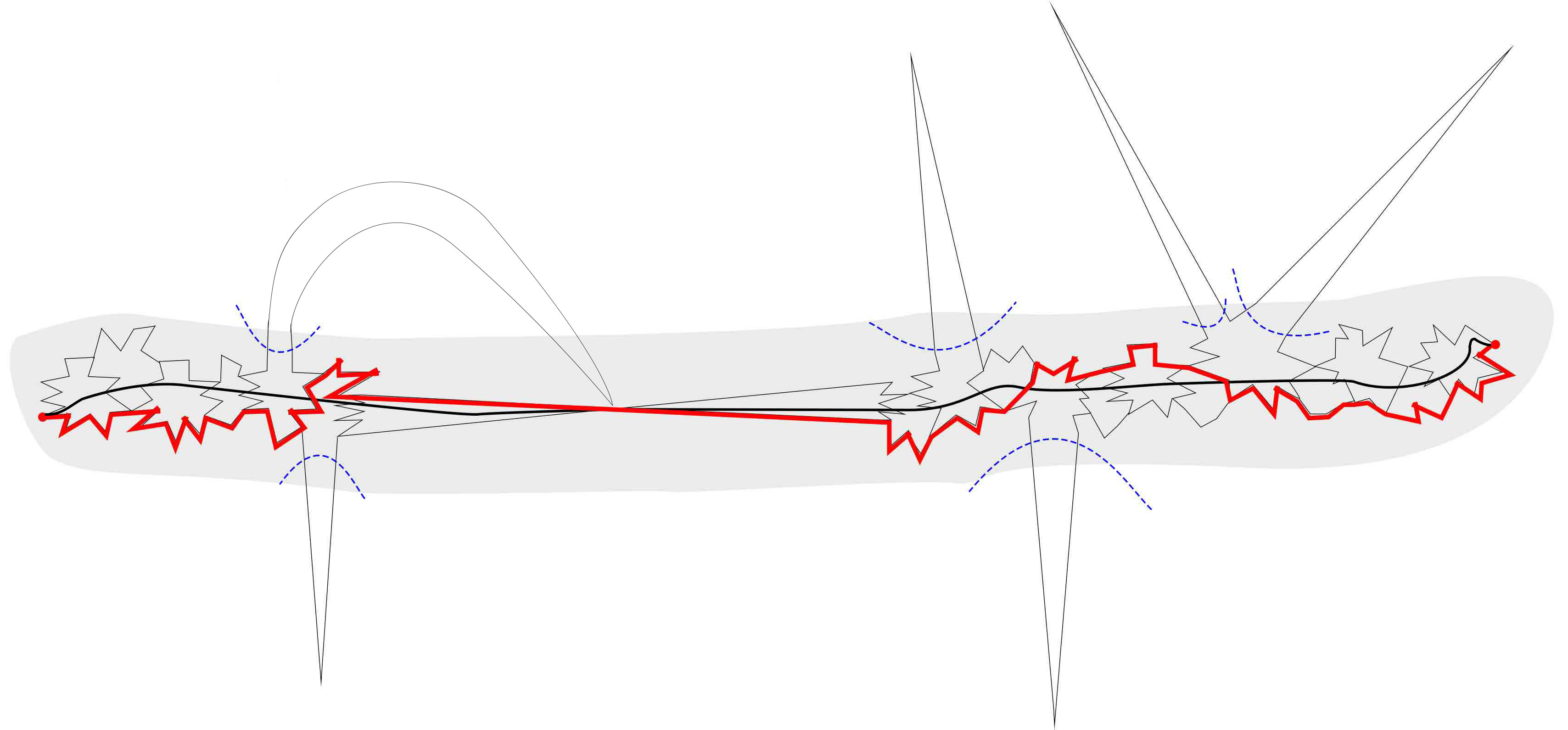}};
	\node at (-3.75,1.80){$\wh{Z}$};
	\node at (3.35,1.80){$\wh{Z}'$};
	\node at (5.10,1.80){$\wh{Z}''$};
	\node at (-2.5,-0.95){$\wh{Z}'''$};
	\end{tikzpicture}
	\caption{Illustration of the proof of Proposition \ref{prop:epsilon_neighbor}. The shaded area shows the $\epsilon$-neighborhood of the edge $\gamma$ (the black smooth arc). As shown, $\gamma$ is covered by many irregular polygon regions, namely $n$-tiles. The wicked sectors (long spines) within a tile are distributed in the same side of $\gamma$. Thus one can find a path $\wt{\gamma}$ (the red arc) along the boundary of tiles avoiding these wicked sectors. }
	\label{fig:obstruction}
\end{figure}

\begin{proof}
	Since the graph $G$ satisfies conditions (C.1) and (C.2), all material in Section \ref{sec_tiles} can be used. According to Lemma \ref{lemma:n_tiles}\,(3), there is an integer $n_0$ such that for all $n$-tiles $(\wh{X}_n,\wh{Y}_n,\wh{\mc{Z}}_n)$ with levels $n\geq n_0$ we have
	\begin{equation}\label{eq:1}
\rho(\wh{X}_n):=\tu{max}\left\{\tu{diam}\,\wh{Y}_n,\tu{width}(\wh{Z}_n), \wh{Z}_n\in\tu{Comp}(\wh{\mc{Z}}_n)\right\}<\epsilon/3.
\end{equation}
	For such an $n$, let $\wh{\tb{X}}_n(\gamma)$ denote the collection of all $n$-tiles $(\wh{X}_n,\wh{Y}_n,\wh{\mc{Z}}_n)$ with
	$\gamma\cap\wh{Y}_n\neq \emptyset.$
Since a sector of $\wh{\mc{Z}}_n$ meeting $\gamma$ in more than one point implies that $\wh{Y}_n\cap \gamma\neq \emptyset$, the collection $\wh{\tb{X}}_n(\gamma)$ forms a covering of $\gamma$.
	
	Now we try to modify all $n$-tiles $(\wh{X}_n,\wh{Y}_n,\wh{\mc{Z}}_n)$ in $\wh{\tb{X}}_n(\gamma)$ to get the so-called \emph{modified tiles} $\wt{X}_n$ such that $\wt{X}_n\subseteq\mc{N}(\gamma,\epsilon)$. It proceeds as follows.
	\begin{itemize}
		\item[(i)] If $\wh{X}_n\subseteq \mc{N}(\gamma,\epsilon)$, we set $\wt{X}_n:=\wh{X}_n$  in this case.
		\item[(ii)] Otherwise, $\tu{diam}\,\wh{X}_n\geq \epsilon$. By the structure of $n$-tiles, we have the estimate $$\tu{diam}\,\wh{Y}_n+ 2\cdot \tu{max}\,\{\tu{diam}\,\wh{Z}:\wh{Z}\in\tu{Comp}(\wh{\mc{Z}}_n)\}\geq \tu{diam}\,\wh{X}_n\geq \epsilon.$$
		 Since $\tu{diam}\,\wh{Y}_n< \epsilon/3$ by \eqref{eq:1}, there are $n$-sectors $\wh{Z}$ in $\wh{\mc{Z}}_n$ so that $$\tu{diam}\,\wh{Z}>\epsilon/3\tu{ and }\wh{Z}_n\nsubseteq\mc{N}(\gamma,\epsilon).$$ Such sectors are called \emph{wicked} sectors of $\wh{X}_n$ (e.g. $\wh{Z},\wh{Z}',\wh{Z}''$ in Figure \ref{fig:obstruction}). Note that a wicked sector $\wh{Z}$ meets $\gamma$ in at most one point. For otherwise, $\wh{Z}\cap\gamma$ contains a segment of an internal ray; since $\tu{width}(\wh{Z})<\epsilon/3$, we have $\wh{Z}\subseteq \mc{N}(\gamma,\epsilon)$, impossible. 		 
	Let $\wh{Z}_1,\dotsc,\wh{Z}_s$ be the collection of wicked sectors in $\tu{Comp}(\wh{\mc{Z}}_n)$. Then the mutually disjoint closed arcs
		$$\ell_i=\wh{Y}_n\cap \wh{Z}_i, 1\leq i\leq s,$$
	do not intersect $\gamma$. In this case, the following closed Jordan disk
		$$\wt{X}_n:=(\wh{X}_n\setminus\wh{Z}_1\cup\cdots\cup \wh{Z}_s)\cup \ell_1\cup \cdots\cup \ell_s$$
is defined as a \emph{modified $n$-tile}, and the arcs $\ell_1,\ldots,\ell_s$ are called the \emph{wicked edges} of $\wt{X}_n$.
			\end{itemize}

	From the construction, we see that $\wt{\tb{X}}_n(\gamma)$, the collection of all modified $n$-tiles $\wt{X}_n$ with $\wh{X}_n\in\wh{\tb{X}}_n(\g)$, is still a covering of $\gamma$. Let $x_n$ (resp. $y_n$) be the first-in (resp. last-out) place of $\gamma$ meeting a modified $n$-tile $\wt{X}_n$. 
	
	\noindent\tb{Claim}~~\emph{There is an integer $n_1(\geq n_0)$ such that for any $n\geq n_1$ every $\wt{X}_n\in \wt{\tb{X}}_n(\gamma)$ with $x_n\neq y_n$ has at most one wicked edge.
	\begin{itemize}
		\item[(1)] If $x_n, y_n\in \tu{int}(\gamma)$, then $x_n, y_n\in\partial\wt{X}_n$. Let $L(\wt{X}_n)\subseteq\partial\wt{X}_n$ be the arc disjoint from the possible wicked edge connecting $x_n$ and $y_n$.
		\item[(2)] Otherwise, one of $x_n$ and $y_n$ is an endpoint of $\gamma$, say $x_n=z_1$, then there is an arc in $f^{-n}(G)\cap \wt{X}_n$ joining $z_1$ and $y_n$. Let $L(\wt{X}_n)$ be the arc in this case.
	\end{itemize}}

We now complete the proof of Proposition \ref{prop:epsilon_neighbor} under the claim. The required arc $\wt{\gamma}$ is obtained by applying the first-in and last-out rule to the modified tiles in $\wt{\tb{X}}_n(\gamma)$. It proceeds as follows.

We first enumerate all the modified tiles in $\wh{\tb{X}}_n(\gamma)$ as $\wt{X}_n^{1}, \ldots, \wt{X}_n^{s}$. Let $\gamma_0:=\gamma$. Inductively, for $k\geq 1$, if $\#\gamma_{k-1}\cap \wt{X}_n^{k}\leq 1$, let $\gamma_{k}:=\gamma_{k-1}$. Otherwise, let $x_n^k$ and $y_n^k$ be the distinct first-in and last-out places of $\gamma_{k-1}$ meeting $\wt{X}_n^k$. Let $\beta_n^k$ be an arc joining $x_n^k$ and $y_n^k$ with its interior in $\tu{int}(\wt{X}^k_n)$. Then in this case we define $$\gamma_{k}:=\left(\gamma_{k-1}\setminus[x_n^k, y_n^k]_{\gamma_k}\right)\cup \beta_n^k.$$
Since the interiors of modified $n$-tiles are mutually disjoint, $\gamma_k$ is an arc.
By induction, we obtain an arc $\gamma_{s}$, which is the union of at most $s$ arcs $\beta_n^k$. We then replace each subarc $\beta_n^k$ of $\gamma_s$ by the arc $L(\wt{X}_n^k)$ obtained in the claim for $\wt{X}^k_n$. The resulting set, denoted by $\Gamma$, joins $z_1$ and $z_2$.
Moreover, by the claim, we have $\Gamma\subseteq f^{-n}(G)$ and $\Gamma\subseteq \mc{N}(\gamma,\epsilon)$. Finally, one can derive an arc $\wt{\gamma}$ within $\Gamma$ connecting $z_1$ and $z_2$, which is as required.

\begin{proof}[Proof of the Claim]
If the claim is not true, there exists a sequence of $n_k$-tiles $(\wh{X}_{n_k},\wh{Y}_{n_k},\wh{\mc{Z}}_{n_k})\in \wh{\tb{X}}_{n_k}(\gamma)$ with $x_{n_k}\neq y_{n_k}$ and $n_k\to \infty$ as $k\to\infty$ such that $\wh{\mc{Z}}_{n_k}$ contains two sectors $\wh{Z}_{n_k}^{\pm}$ satisfying
		\begin{enumerate}
			\item[(i)] the diameters of $\wh{Z}^{\pm}_{n_k}$ are greater than $\epsilon/3$;
			\item[(ii)] both $\wh{Z}^{\pm}_{n_k}$ are not contained in $\mc{N}(\gamma,\epsilon)$;
			\item[(iii)]  the compact sets $\wh{X}_{n_k},\wh{Y}_{n_k},\wh{Z}^-_{n_k}$ and $\wh{Z}^+_{n_k}$ converge to $X,Y, \ol{R^-}$ and $\ol{R^+}$ in the sense of Hausdorff topology, respectively.
		\end{enumerate}
By Lemma \ref{lemma:n_tiles}\,(3), $Y=\{z\}$ with $z$ a point in $\gamma\cap\mc{J}_f$. Since there are only finitely many Fatou domains whose diameters are greater than $\epsilon/3$, by passing to a subsequence we may assume $\wh{Z}_{n_k}^{\pm}$ always belong to the two distinct Fatou domains $U^{\pm}$, respectively. By Lemma \ref{lemma:converge_faces}\,(3), the limits $\ol{R^{\pm}}$ are closed internal rays with $R^{\pm}$ from $U^{\pm}$, respectively. Moreover, it holds that $R^-, R^+\in\tu{Acc}(z)$. Property (ii) implies $\tu{int}(R^{\pm})\cap\gamma=\es$.

If $\#\tu{Acc}(z)=2$, then $R^+$ and $R^-$ are adjacent. They bound two ordinary entrances. 
Since $z\in\gamma$ and $\tu{int}(R^{\pm})\cap\gamma=\es$, at least one component of $\gamma\setminus \{z\}$ approaches the point $z$ along an ordinary entrance. This contradicts the joint condition for $G$ at the point $z$.

If $\#\tu{Acc}(z)\geq 3$, then by Theorem \ref{thm:multi} the point $z$ is eventually periodic. Moreover, it falls into a cycle within $P_0$ from the choice of $P_0$. Condition (C.5) of admissible graphs gives that the set $S:=\bigcup \{\ol{R}: R\in\tu{Acc}(z)\}$ will be iterated into the graph $G$. Thus for each sufficiently large integer $k$, we have $S\subseteq f^{-n_k}(G)$. We assume further that $k$ is so large that
$$\tu{max}\{\tu{diam}\,\wh{Y}_{m}: (\wh{X}_{m}, \wh{Y}_m,\wh{\mc{Z}}_m)\in \wh{\bf{X}}_{m}, m\geq n_k\}<\tu{min}\{\tu{diam}\, R: R\in\tu{Acc}(z)\}.$$
Then each internal ray $R$ in $S$ is contained in the boundary of an $m$-tile $\wh{X}_m$ for any $m\geq n_k$. Indeed, the above inequality implies $R$ is not covered by any $m$-subtiles. Thus $R\subset \wh{Y}_m\cup \wh{Z}_m$ for some sector $\wh{Z}_m$ in $\wh{\mc{Z}}_m$. Since $\tu{int}(\wh{Z}_m)\cap f^{-m}(G)=\es$ and $R\subset f^{-m}(G)$, it follows that $R\subset \partial \wh{X}_m$.

 This implies $S$ cannot disconnect subtiles $\wh{Y}_{n_k}$ for all large integers $k$. We choose an open disk $B$ around $z$ such that $R\subset \ol{B}$ and $\partial B\cap R=\tu{end}(R)\setminus\{z\}$ for all $R\in \tu{Acc}(z)$. Since the sequence $\wh{Y}_{n_k}$ tends to the point $z$, by passing to a subsequence if necessary, there is a component $D$ of $B\setminus S$ such that it contains all $\wh{Y}_{n_k}$ for sufficiently large integers $k$. Then $\wh{Z}_{n_k}^\pm\subset \ol{D}$ and the limit $\ol{R^{+}}\cup\ol{R^-}$ lies in $\partial D\cap S$. We conclude that $R^+$ and $R^-$ are adjacent. 

We claim that at least a component of $\gamma\setminus\{z\}$ approaches $z$ along the ordinary entrance bounded by $R^+$ and $R^-$. Indeed, by the condition that $x_{n_k}\neq y_{n_k}$, we have $\gamma\cap(\wh{Y}_{n_k}\setminus\{z\})\neq \es$. Hence, since $\wh{Y}_{n_k}$ converges to $z$ in $D$, so does $\gamma$. The claim holds.

As $\gamma$ satisfies that joint condition at the point $z$, we obtain a contradiction by the claim. Statement (1) follows directly.

For statement (2), since the endpoint $z_1$ is eventually periodic in $P\subseteq \mc{V}$, the point $z_1$ is contained in the subgraph $T_n:=f^{-n}(G)\cap \wh{X}_n$ for each sufficiently large $n$. If there is no wicked sector of $\wh{X}_{n}$, then $\wh{X}_n=\wt{X}_n$. There is an arc in $T_n$ connecting $z_1$ and $y_n$. Otherwise, let $\wh{Z}_n$ be the only wicked sector of $\wh{X}_n$ in a Fatou domain $U$. By definition, $\wh{X}_n=\wt{X}_n\cup \wh{Z}_n$ and $\tu{int}(\wh{Z}_n)\cap f^{-n}(G)=\es$. Since $\partial \widehat{X}_n\subseteq T_n$ is a Jordan curve and $\wh{Z}_n\cap T_n$ is a subarc of $\partial\wh{X}_n$ in a Fatou domain, then $T_n\setminus\wh{Z}_n$ is still connected. Therefore, one can find an arc in $T_n\setminus \wh{Z}_n$ joining $z_1$ and $y_n$. The proof of the claim is complete.
\end{proof}
\end{proof}

\section{Proof of Theorem \ref{thm:invariant}}\label{sec_invariant}

This section has two parts: the first part is to show that an admissible graph can be modified in an arbitrarily small neighborhood to a new one having the ``local invariance'' property; see Proposition \ref{prop:new}; the second part is devoted to proving Theorem \ref{thm:invariant} along the outline given in the introduction.

\begin{lemma}\label{lem:one}
	Let  $U$ be a Fatou domain and $z\in\partial U$ with $\tu{Acc}(z,U)=\{R\}$. Then for any $\epsilon>0$, there is a number $\delta>0$ such that  for any $z'\in\partial U$ with $\tu{dist}(z,z')<\delta$ and any $R'\in\tu{Acc}(z',U)$, we have
	$R'\subseteq \mc{N}(R,\epsilon).$	
\end{lemma}

\begin{proof}
	Let $z_n$ be points in $\partial U$ converging to $z$, and $R_n$ be internal rays of angles $\theta_n$ in $U$ landing at $z_n$. Let $\theta$ be the limit of a convergent subsequence $\theta_{n_k}$. Then the radial rays $[0, 1]\,e^{2\pi \tb{i}\theta_{n_k}}$ converge to $[0, 1]\,e^{2\pi \tb{i}\theta}$ in $\ol{\mb{D}}$ in the sense of Hausdorff topology. By the uniform continuity of the inverse of the B\"{o}ttcher coordinate $\phi^{-1}_U$ on $\ol{\mb{D}}$, the closed rays $R_{n_k}\cup\{z_{n_k}\}$ converge to $\ol{R_U(\theta)}$ in the sense of Hausdorff topology. Since $z_{n_k}\to z$ as $k\to\infty$ and $\#\tu{Acc}(z, U)=1$, we have $R=R_U(\theta)$. It follows that $\theta_n\to \theta$ and $R_n\to R$ as $n\to \infty$ whenever $z_n\to z$.
	
	This argument shows that when $z'\in\partial U$ is close to $z$,  the internal angles (resp. internal rays), associated to $z'$ are also close to $\theta$ (resp. $R_U(\theta)$). Thus the proof is complete.
\end{proof}

\begin{proposition}\label{prop:new}
Let $f$ be a PCF and $P_0$ be the set given in Theorem \ref{thm:invariant}. Let $P$ be a finite and $f$-invariant set containing $P_0$. Let $G=(\mc{V},\mc{E})$ be an admissible graph with respect to $(f,P)$. Then for any $\epsilon>0$, there is an admissible graph $G_\epsilon=(\mc{V}_\epsilon,\mc{E}_\epsilon)$ satisfying that
\begin{itemize}
	\item $\mc{V}_\epsilon=\mc{V}$ and $G_\epsilon\subseteq \mc{N}(G,\epsilon)$;
	\item $G_\epsilon\cap (U_1\cup\cdots\cup U_m)$ is $f^n$-invariant for each sufficiently large integer $n$, where $U_1,\dotsc, U_m$  are the Fatou domains containing Fatou-type vertices in $\mc{V}$.
\end{itemize}
\end{proposition}

\begin{proof}
Let $\mc{E}_{0}$ be the collection of edges $e$ of $G$, such that
\begin{itemize}
	\item an endpoint of $e$ is a Fatou-type vertex of $G$, say $c(U)$, and
	\item the landing point of the internal ray $e\cap U$ is never iterated into $P$.
\end{itemize}
By deleting all $\tu{int}(e), e\in\mc{E}_{0}$ from $G$, we obtain a set $G_0$ (a subgraph if still connected).
Let $e$ be an edge in $\mc{E}_{0}$ with endpoints $z_i,i=1,2$. If $z_i$ is of Fatou-type, we denote by $U_i$ the Fatou domain containing $z_i$, $R_i:=e\cap U_i$, and $\xi_i$ the landing point of $R_i$.

\noindent\tb{Claim}~ \emph{For any $\epsilon>0$, there exists a regulated arc $\wt{e}$ connecting $z_1$ and $z_2$, such that \begin{enumerate}
\item $\wt{e}\subseteq \mc{N}(e,\epsilon)$ satisfies the joint condition;
\item $\wt{e}$ coincides with $e$ near $z_i$ whenever $z_i$ is of Julia-type;
\item  the landing point of $\wt{R}_i:=\wt{e}\cap U_i$ is iterated into $P$ whenever $z_i$ is of Fatou-type.
\end{enumerate}}

We first complete the proof of Proposition \ref{prop:new} under the claim. By letting $\epsilon$ be sufficiently small, the open arcs $\tu{int}(\wt{e}), e\in\mc{E}_{0},$ are mutually disjoint, and do not intersect $G_0$. Thus, the graph $$G_\epsilon:=G_0\cup~\{\wt{e}:e\in\mc{E}_{0}\}$$
has the vertex set $\mc{V}$, and is homeomorphic to $G$ under a homeomorphism $\phi:G\to G_\epsilon$ such that $\phi(e):=\wt{e}$ if $e\in\mc{E}_{0}$ and $\phi(e):=e$ otherwise. It follows immediately from the claim that $G_\epsilon$ is an admissible graph as required.

\begin{proof}[Proof of the claim]
Suppose that $z_1$ is a Fatou center. To keep the notation simple, we write $z_1,U_1,R_1$ and $\xi_1$ as $z,U,R$ and $\xi$.
We claim that $\xi\in\tu{int}(e)$ and $\#\tu{Acc}(\xi)=1\tu{ or }2$. For otherwise, either $\xi\in \mc{V}$ or $\#\tu{Acc}(\xi)\geq 3$. In both cases $\xi$ is eventually mapped into $P$, which contradicts the choice of $e$.

Let $\theta$ be the angle such that $R=R_U(\theta)$. We claim that for any $\epsilon$ there exist two angles $\theta_{\pm}$ with $\theta\in]\theta_-, \theta_+[$ such that $R_\pm:=R_U(\theta_\pm)\subseteq \mc{N}(R,\epsilon)$, and the landing points $w_{\pm}$ of $R_\pm$ are eventually iterated into $P_0$. To see this, we know that $U$ eventually falls into a Fatou domain cycle, and the graph $G$ contains the $f^p$-invariant internal ray $R_V(0)$ for each Fatou domain $V$ in the cycle, where $p$ is the period of the cycle. This follows by the choice of $P_0$ and condition (C.5) for admissible graphs. Let $d:=\tu{deg}(f^p|_V)$. Then the angles
$${i}/{d^k}, {k\geq 1, 1\leq i\leq d^k}$$
are dense in $\mb{R}/\mb{Z}$ and are eventually fixed under the iterations of $\eta\mapsto d\eta\tu{ mod }\mb{Z}$. Since the inverse of the B\"{o}ttcher coordinate $\phi_V^{-1}:\ol{\mb{D}}\to \ol{V}$ is uniformly continuous,  the rays $R_V(\frac{i}{d^k})$ can be arbitrarily close to any internal ray of $V$ in both directions. By pulling back these rays into $U$, the claim of this paragraph follows.

Without loss of generality, we may assume $w_-\neq w_+$. By Lemma \ref{lem:dusty_linking}, there is an $\Omega$-clean arc $\gamma$ relative to $U$ joining $w_-$ and $w_+$, such that $\gamma\subset\mc{N}(e,\epsilon)\cup U'$, where either $U'=\es$ or $U'\in\tu{Dom}(\xi)\sm\{U\}$ if $\#\tu{Dom}(\xi)=2$. Then $\gamma\cup R_+\cup R_-$ forms a Jordan curve. 
Let $W$ be the complementary component of this curve that contains $\tu{int}(R)$. We may assume that $z_2\notin W$ and even $z_2\notin \ol{W}$ if $z_2\in \mc{J}_f$.

Since $e$ joins $z_1$ to $z_2$, there exists the last-out place $w$ of $e$ meeting $\gamma$. Then $w$ is either a point in $\mc{J}_f$ or a Fatou center. If $w\in\tu{end}(\gamma)$, say $w=w_+$, we define $e_\epsilon:=R_+\cup[w, z_2]_e$; we are done.
Otherwise, we consider the following two cases.

Case 1: $\gamma\subseteq\mc{N}(R,\epsilon)$. Let us define
$$e_\epsilon:=R_+\cup [w_+, w]_\gamma\cup [w, z_2]_e.$$
Then $e_\epsilon$ differs from $e$ just in an $\epsilon$-neighborhood of $R$. We now verify that $e_\epsilon$ satisfies the joint condition at the point $w$ when $w\in\mc{J}_f$.


If $w\notin\partial U$, then $w$ belongs to the interior of a clean arc. So $w$ is either buried or of type 1a or of type 2a. In the first case, the joint condition is satisfied obviously. In the last two cases, all the one or two rays in $\tu{Acc}(w)$ are contained in $\gamma$, but they are disjoint from $[w, z_2]_e$. By exchanging $w_+$ and $w_-$ if necessary, we may assume $[w_+, w]_\gamma$ approaches $w$ along a ray in $\tu{Acc}(w)$. Since the joint condition holds for $[w, z_2]_e$ at $w$, we see that $e_\epsilon$ satisfies the joint condition at $w$ as well.

If $w\in \partial U$, since $w\in \tu{int}(\gamma)$, from the definition of $\Omega$-clean arcs and the topological structure of $K_U$, there are two rays $R_w$ and $R_w'$ in $\tu{Acc}(w,U)$ such that the Jordan curve $\beta:=\ol{R_w\cup R_w'}$ separates $[w_+,w[_\gamma$ and $[w_-, w[_\gamma$. By exchanging $w_+$ and $w_-$ if necessary, we assume $\beta$ also separates $[w_+, w[_\gamma$ and $]w, z_2]_e$. Therefore, the joint condition holds for $e_\epsilon$ at the point $w$.

Case 2: $\gamma\nsubseteq\mc{N}(R, \epsilon)$. Then $\#\tu{Dom}(\xi)=2$ and $c(U')\in\gamma$ with $\tu{Dom}(\xi):=\{U, U'\}$. Assume $\tu{Acc}(\xi)=\{R,R'\}$ and $R'\subset U'\subset \Omega\in\tu{Comp}(\wh{\mb{C}}\setminus \ol{U})$. Since $e$ satisfies the joint condition at $\xi$, the arc $[\xi, z_2]_e$ approaches $\xi$ along the ray $R'$. 
By letting $w_+$ and $w_-$ be sufficiently close to $\xi$, we have 
$w=c(U')\tu{ and }\gamma\cap U'\subset \mc{N}(R',\epsilon)$. This is a consequence of Lemma \ref{lem:dusty_linking} and Lemma \ref{lem:one}. Then in this case we define 
$$e_\epsilon:=R_+\cup [w_+, c(U')]_\gamma\cup [c(U'), z_2]_e.$$ Clearly, $e_\epsilon\subseteq \mc{N}(e,\epsilon)$ and the joint condition holds for $e_\epsilon$.

Now, we have modified the edge $e$ near the endpoint $z_1$. If the other endpoint $z_2$ of $e$ is of Julia-type, then $\wt{e}:=e_\epsilon$ satisfies the required properties in the claim. Otherwise, we repeat the process above by replacing $z,U,e,R,\xi$ with $z_2,U_2,e_\epsilon,R_2,\xi_2$, and the resulting edge, denoted by $\wt{e}$, is as required.
\end{proof}
\end{proof}

\subsection{Pipes of admissible graphs}
	Let $f$ be a  PCF and $G=(\mc{V},\mc{E})$ be an admissible graph with respect to $(f, P)$. 
	
	Let $J_G$ be the union of Julia-type endpoints of $\ol{G\cap U}$ with $U$ taken over all Fatou domains whose centers are vertices of $\mc{V}$. Associated to $G$, we have a new graph $G_0=(\mc{V}_0, \mc{E}_0)$ with $G_0:=G$ and $\mc{V}_0:=\mc{V}\cup J_G$. Note that $G_0$ may not be admissible with respect to $(f, P)$, as it possibly fails condition (C.4). 

	We will construct a set $\mc{P}=\mc{P}(G)$, called a \emph{pipe} of $G$ (or $G_0$), which is almost a tubular neighborhood of $G$. It coincides with $G$ on the Fatou domains whose centers are in $\mc{V}$; see Figure \ref{fig:conv2}.

Let $z$ be a vertex of $G_0$. If $z$ is of Fatou-type, let $B_z=\{z\}$; otherwise, choose a closed topological disk $B_z$ around the vertex $z$ and assume $B_z$ is well-behaved: $B_z\cap e, e\in \mc{E}_0(z)$ is connected; $B_z\cap B_{z'}=\emptyset$ whenever $z\neq z'\in\mc{V}$. Here $\mc{E}_0(z)$ is  the collection of edges in $\mc{E}_0$ containing the vertex $z$.
	
Next, we assign to each edge $e$ a tubular-like neighborhood $P_e$. An edge of a regulated graph is called \emph{trivial} if it is a closed internal ray and \emph{non-trivial} otherwise. By the above assumption on $\mc{V}_0$, edges attached to Fatou-type vertices are trivial.  In this case, we define $H_e:=e\setminus\tu{int}(B_{z'}),$
where $z'$ is the Julia-type endpoint of $e$.
If $e$ is non-trivial, let $g$ be a homeomorphism, defined on a neighborhood of $e$, sending $e$ into $\mb{C}$ such that $$g(e)=[-2,2],~~ g(B_z)=\ol{\mathbb{D}(-2,1)}\textup{ and } g(B_{z'})=\ol{\mb{D}(2,1)},$$
	where $\tu{end}(e)=\{z,z'\}$.
   For a positive angle $\theta_\epsilon$ close to zero, we have four arcs
	$$e_{up}=g^{-1}\{\zeta\in\mb{C}:-2+\tu{cos}(\theta_\epsilon)\leq\tu{Re}(\zeta)\leq 2-\tu{cos}(\theta_\epsilon), \tu{Im}(\zeta)=\tu{sin}(\theta_\epsilon)\},$$ $$e_{down}=g^{-1}\{\zeta\in\mb{C}:-2+\tu{cos}(\theta_\epsilon)\leq\tu{Re}(\zeta)\leq 2-\tu{cos}(\theta_\epsilon), \tu{Im}(\zeta)=-\tu{sin}(\theta_\epsilon)\},$$
	$$e_{left}=g^{-1}\{-2+e^{\tb{i}t}:-\theta_\epsilon\leq t\leq \theta_\epsilon\},$$
	and
	$$e_{right}=g^{-1}\{2+e^{\tb{i}t}:\pi-\theta_\epsilon\leq t\leq \pi+\theta_\epsilon\}.$$
	We define $H_e$ as the closed quadrilateral surrounded by $e_{up}$, $e_{down}$, $e_{left}$ and $e_{right}$.
	One may assume that $H_e\cap H_{e'}=\es$ if $e\neq e'\in\mc{E}_0$ and $H_e\cap B_z=\emptyset$ if $e\in \mc{E}_0$ and $z\notin \tu{end}(e)$. For each $e\in\mc{E}_0$, the set $P_e:=B_z\cup H_e\cup B_{z'}$ is called a \emph{pipe} of $e$.
	Finally, the union
	$$\mc{P}=\mc{P}(G):=\bigcup_{e\in\mc{E}_0}\,P_e$$
	is called a \emph{pipe} of the graph $G$ (or $G_0$).

\begin{figure}[h]
\centering
\includegraphics[width=0.7\linewidth]{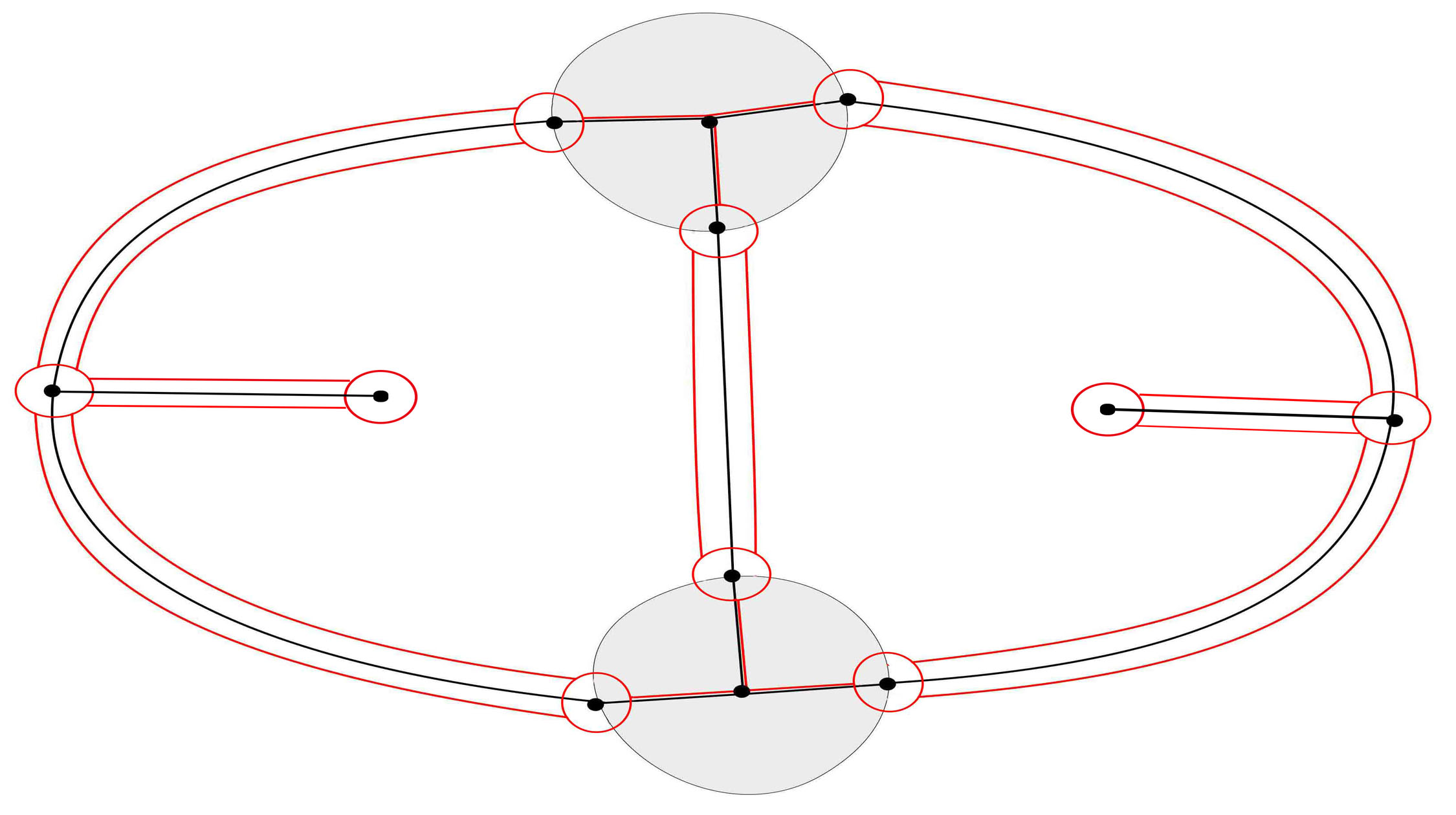}
\caption{A pipe of a graph $G$: the two shaded disks are Fatou domains; $G$ is composed of the black curves; while the area surrounded by the red curves is a pipe $P$ of $G$. It looks like a tubular neighborhood of $G$, though degenerated in the Fatou domains that contain vertices.}
\label{fig:conv2}
\end{figure}

\subsection{Proof of Theorem \ref{thm:invariant}}

\begin{proof}[Proof of Theorem \ref{thm:invariant}]
Recall that an edge of a regulated graph is called trivial if it is a closed internal ray and non-trivial otherwise.
Note that a non-trivial edge in $\mc{E}$ approaches an accessible Julia-type endpoint along either an internal ray or a special entrance by the joint condition. By adding finitely many Fatou centers into $\mc{V}$, we may assume that any non-trivial edge in $\mc{E}$ goes to an accessible Julia-type endpoint $v$ always through a special entrance. Clearly the graph $G=(\mc{V},\mc{E})$ is still admissible for $(f,P)$.

By Proposition \ref{prop:new}, we modify $G$ within its neighborhood to a new one, denoted by $G=(\mc{V},\mc{E})$ again, such that it is $f^n$-invariant for each sufficiently large integer $n$, when restricted to the union
of Fatou domains whose centers are contained in $\mc{V}$. 

Let $G_0=(\mc{V}_0,\mc{E}_0)$ be the graph induced by $G$ with $\mc{V}_0=\mc{V}\cup J_G$ and $\mc{P}_0$ be a pipe of $G_0$ such that $\mc{P}_0\subseteq \mc{N}(G_0, \epsilon)$, where $J_G$ is the union of landing points of all internal rays $R$ that $R\subset G$ and $R$ contains a Fatou-type vertex of $\mc{V}$. By the ``local invariance'' property for $G$ after the modification, we have $f^n(J_G)\subseteq P$ for each sufficiently large integer $n$. Moreover, $G_0$ has the properties that
\begin{itemize}
	\item[(i)]endpoints of non-trivial edges in $\mc{E}_0$ are of Julia-type;
	\item[(ii)] any non-trivial edge goes to an endpoint $v$ through either a special entrance or an internal ray, moreover, the latter happens only if $\tu{valence}(v, G_0)=2$ and the other edge attached to $v$ is trivial and $v\in J_G$. 
\end{itemize}
The proof is  broken up into three steps.
	
\noindent	\tb{Step 1.} Construct an isotopically $f^n$-invariant graph $G_1=(\mc{V}_1,\mc{E}_1)$ and its pipe $\mc{P}_1$. Let $\delta_0$ be a positive number such that, for every non-trivial edge $e$ in $\mc{E}_0$, it holds that
$\mc{N}(e, 4\delta_0)\subseteq \mc{P}_0$.

Now we construct an arc $l_e$ for each edge $e\in\mc{E}_0$. If $e$ is trivial, let $l_e:=e$. Otherwise, the two endpoints of $e$ are of Julia-type in $\mc{V}_0$. Note that $\mc{V}_0$ is $f^n$-invariant for each sufficiently large integer $n$.
Indeed, condition (C.4) for $G$ implies all Julia-type vertices in $\mc{V}\setminus P$ are multi-accessible and so are eventually iterated into $P_0$; and $f^{n}(J_G)\subseteq P$ as mentioned above.  Since $e$ satisfies the joint condition, by applying Proposition \ref{prop:epsilon_neighbor} to the graph $G=(\mc{V},\mc{E})$ and $e$, we obtain an arc $$l_e\subset f^{-n}(G)\cap \mc{N}(e,\delta_0)=f^{-n}(G_0)\cap \mc{N}(e,\delta_0)$$ joining $\tu{end}(e)$ for each sufficiently large integer $n$ in this case.

Consider a Julia-type vertex $v$ of $G_0$ which is not an endpoint. Then $v$ is accessible. Let $e$ be a non-trivial edge of $G_0$ with $v\in{\rm end}(e)$. We shrink $\delta_0$ such that no internal rays in $\tu{Acc}(v)$ are contained in $\mc{N}(e, \delta_0)$ except the possible one contained in $e$. If the edge $e$ approaches $v$ along a special entrance $E_e$, then $l_e$ also goes to $v$ along $E_e$. Indeed, for otherwise, $l_e$ has to intersect the interior of an internal ray bounding $E_e$, say $R$, by the choice of $\delta_0$. Since $l_e$ is regulated, we have $R\subset l_e\subset\mc{N}(e, \delta_0)$, which is impossible. If the edge $e$ approaches $v$ along an internal ray, by property (ii) for $G_0$ we have $\tu{valence}(v, G_0)=2$ and the other edge attached to $v$ is trivial, say $R'$. The choice of $\delta_0$ also gives that $R'\nsubseteq l_e$.

By the above arguments, if we take $\de_0$ sufficiently small, the interiors of these arcs $l_e,e\in\EEE_0$ are pairwise disjoint in neighborhoods of the vertices. By shrinking $\de_0$ furthermore if necessary, we get ${\rm int}(l_e),e\in\EEE_0$ are pairwise disjoint.  


We set $G_1=\bigcup_{e\in\mc{E}_0}l_e.$ Then $G_1$ is indeed a graph homeomorphic to $G_0$. Letting $\delta_0$ be small enough, we have that $G_1$ is isotopic to $G_0$ rel. $\mc{V}_0$; refer to \cite[Proposition 11.7]{BM}.   The new vertex set $\mc{V}_1$ of $G_1$ is defined as $\mc{V}_1:=f^{-n}(\mc{V}_0)\cap G_1$. Note that each $l_e$ is made up of some edges of $G_1$, and an edge in $\mc{E}_1$ is trivial if and only if its image under $f^n$ is trivial in $\mc{E}_0$.
	
Let $\Phi_{0}:[0,1]\times G_0\to \mc{P}_0$ be an isotopy rel. $\mc{V}_0$ such that,
	for each $e\in \mc{E}_0$,
	$$\Phi_{0}(0,\cdot)=\tu{id} ,\ \ \Phi_{0}(1,e)=l_e\tu{ and } \Phi_{0}(t,a)=a~~ \forall~ t\in[0,1]\tu{ and } a\in\tu{end}(e).$$
	In particular, if $e$ is trivial, $\Phi_0(t,\cdot)|_e=\tu{id}\ \forall\, t\in[0,1]$. The isotopy $\Phi_0$ induces a homeomorphism $\phi_0:=\Phi_0(1,\cdot)$ from $G_0$ onto $G_1$ such that $\phi_0(v)=v$ for $v\in\mc{V}_0$ and $\phi_0(e)=l_e$ for $e\in\mc{E}_0$.

Besides, for any $\xi\in G_0$, the isotopy $\Phi_0$ also induces a curve $\gamma_\xi:[0,1]\ni t\mapsto\Phi_0(t,\xi).$
From the uniform continuity of $\Phi_{0}$, there is a number $\Lambda$, independent of $\xi\in G_0$, such that each curve $\gamma_\xi$ can be broken up into at most $\Lambda$ subcurves
	$\gamma_\xi^{(1)},\dotsc,\gamma_\xi^{(i)}$
	with their diameters satisfying Lemma \ref{lemma:shrinking_lemma_locally}. Let $M$ be the maximum among all diameters of these subcurves with respect to the orbifold metric. Clearly
	$\tu{dist}_{\mc{O}}(\xi, \phi_0(\xi))\leq \Lambda\cdot M.$
	
	 Now we construct a pipe of $G_1$ from $\mc{P}_0$. Note that each $\wt{e}\in\mc{E}_1$ is a lift of $e=f^n(\wt{e})$. We assume $\tu{end}(\wt{e})=\{w,w'\}$. Then $\tu{end}(e)=\{f^n(w),f^n(w')\}$. Let $(B_w,B_{w'},H_{\wt{e}})$ be the pullbacks of $(B_{f^n(w)},B_{f^n(w')},H_{e})$. We then define $P_{\wt{e}}=B_w\cup H_{\wt{e}}\cup B_{w'}$, and call it the \emph{pipe} of $\wt{e}$ (induced by $\mc{P}_0$).
	The pipe of $G_1$ (induced by $\mc{P}_0$) is defined as
	$\mc{P}_1=\bigcup_{\wt{e}\in \mc{E}_1}P_{\wt{e}}.$
	It is nested in $\mc{P}_0$ for each sufficiently large integer $n$ by Lemma \ref{lemma:shrinking_lemma}.
	
\noindent	\tb{Step 2.} Construct a sequence $G_{k+1}=(\mc{V}_{k+1},\mc{E}_{k+1}),\Phi_k,\phi_k\tu{ and } \mc{P}_{k+1}$ inductively.
	Indeed, for any $k\geq1$, let
\[\text{$\Phi_{k-1}:[0,1]\times G_{k-1}\to \mc{P}_{k-1}$ rel. $\mc{V}_{k-1}$}\]
 be an isotopy between $G_{k-1}$ and $G_k$. Since the critical values of $f^n$ are contained in $\tu{Post}(f)\subseteq \mc{V}_{k-1}$, the isotopy $\Phi_{k-1}$ can be lifted by $f^n$
to a unique isotopy
$\Phi_{k}:[0,1]\times G_k\to\mc{P}_k \text{ rel. } \mc{V}_k$
such that $$\Phi_k(0,\cdot)=\tu{id}\tu{ and }f^n\circ\Phi_k(t,\xi)=\Phi_{k-1}(t,f^n(\xi))~~\forall\, (t,\xi)\in[0,1]\times G_k.$$
We define the graph $G_{k+1}:=\Phi_k(1,G_k)$ and the homeomorphism $\phi_k:G_k\to G_{k+1}$ as $\phi_k(\cdot):=\Phi_k(1,\cdot)$. It follows immediately that $f^n(G_{k+1})\subseteq G_k$, and $f^n\circ \phi_k=\phi_{k-1}\circ f^n$ on $G_k$. The vertex set $\mc{V}_{k+1}$ of $G_{k+1}$ is chosen as $\mc{V}_{k+1}:=f^{-n}(\mc{V}_k)\cap G_{k+1}.$ Note that an edge of $G_{k+1}$ is trivial if and only if its image under $f^n$ is a trivial edge of $G_k$.
	
 For each $\xi\in G_k$, let $w:=f^{kn}(\xi)\in G_0$. Then the curves
	$\wt{\gamma}_\xi:[0,1]\ni t\mapsto \Phi_k(t,\xi)$ and $\wt{\gamma}^{(1)}_{\xi},\dotsc,\wt{\gamma}_{\xi}^{(i)}$ are the lifts of $\gamma_w$ and $\gamma_w^{(1)},\dotsc,\gamma_w^{(i)}$, respectively. We estimate the distance between $G_k$ and $G_{k+1}$. If $\xi$ is contained in a non-trivial edge of $G_k$, it follows by Lemma \ref{lemma:shrinking_lemma_locally} that
	\begin{equation}\notag
	\begin{split}
	\tu{dist}_{\mc{O}}(\xi,\phi_k(\xi))\leq &~\tu{diam}_{\mc{O}}\,\wt{\gamma}_\xi\leq  \tu{diam}_{\mc{O}}\,\wt{\gamma}_\xi^{(1)}+\cdots+ \tu{diam}_{\mc{O}}\,\wt{\gamma}_\xi^{(i)}\\
	\leq &~C\cdot\tu{diam}_{\mc{O}}\gamma_w^{(1)}/\lambda^{kn}+\cdots+C\cdot \tu{diam}_{\mc{O}}\gamma_w^{(i)}/\lambda^{kn}\\
	\leq &~C\Lambda M/\lambda^{kn}.
	\end{split}
	\end{equation}
 Otherwise, we have $\tu{dist}_{\mc{O}}(\xi,\phi_k(\xi))=0$, since $\Phi_k([0,1],\xi)=\xi$. Then the Hausdorff distance between $G_k$ and $G_{k+1}$ in the sense of orbifold metric is bounded by $C\Lambda M/\lambda^{kn}$.
	
	It remains to construct a pipe $\mc{P}_{k+1}$ of $G_{k+1}$. Each edge $\wt{e}\in\mc{E}_{k+1}$ is sent to $e:=f^{n}(\wt{e})\in\mc{E}_k$. We assume $\tu{end}(\wt{e}):=\{w,w'\}$. Then $(B_w,B_{w'},H_{\wt{e}})$ are the pullbacks of $(B_{f^n(w)},B_{f^n(w')},H_e)$ under $f^n$. Let $P_{\wt{e}}=B_{w}\cup H_{\wt{e}}\cup B_{w'}$ be a pipe of $\wt{e}$. Then we define
	$\mc{P}_{k+1}:=\bigcup_{\wt{e}\in\mc{E}_{k+1}}P_{\wt{e}}$.
    Note that $\mc{P}_{k+1}$ is nested in $\mc{P}_k$ by induction.
    	
	In summary, this step is devoted to producing a sequence $(G_{k+1}, \mc{P}_{k+1}, \phi_k)$ satisfying that
	\begin{itemize}
		\item[(1)] $f^n(G_{k+1})\subseteq G_k$;
		\item[(2)] $G_{k+1}\simeq G_k\tu{ rel. }\mc{V}_k$;
		\item[(3)] $\tu{dist}_{\mc{O}}(\xi,\phi_k(\xi))\leq C\Lambda M/\lambda^{kn}\ \ \forall\,\xi\in G_k$;
		\item[(4)] $G_{k+1}\subseteq\mc{P}_{k+1}\subseteq \mc{P}_k$; and
\item[(5)]  $\max\,\{\tu{diam}\,P_{\wt{e}}:\wt{e}\in \mc{E}_k \text{ non-trivial}\}\to0$ as $k\to\infty$ by Lemma \ref{lemma:shrinking_lemma}.
	\end{itemize}

\noindent	\tb{Step 3.} The convergence of the sequence $(G_k,\mc{P}_k)$. For each $k\geq0$, there is a homeomorphism $$h_k=\phi_{k}\circ\cdots\circ\phi_0: G_0\to G_k$$ keeping $\mc{V}_0$ fixed.
Actually, the sequence $h_k$ uniformly converges to a map $h_\infty$. To see this, for integers $m> l\geq 1$ and $\xi\in G_0$, we have	
	\begin{equation}\notag
	\begin{split}
	\tu{dist}_{\mc{O}}(h_l(\xi),h_m(\xi))
	\leq&~\tu{dist}_\mc{O}(h_l(\xi),h_{l+1}(\xi))+\cdots+\tu{dist}_{\mc{O}}(h_{m-1}(\xi),h_m(\xi))\\
=&~\tu{dist}_\mc{O}(h_l(\xi),\phi_{l+1}\circ h_l(\xi))+\cdots+\tu{dist}_{\mc{O}}(h_{m-1}(\xi),\phi_m\circ h_{m-1}(\xi))\\
	\leq &~ C\Lambda M\left(\frac{1}{\lambda^{(l+1)n}}+\cdots+\frac{1}{\lambda^{mn}}\right)(\text{by point (3) in Step 2})\\
	\leq &~ \frac{C\Lambda M}{\lambda^{ln}(\lambda^n-1)}.
	\end{split}
	\end{equation}
Let $G_\infty:=h_{\infty}(G_0)$. Then, by points (1) and (4) in Step 2, we have
$$\mc{V}_0\subseteq G_\infty \subseteq \mc{P}_\infty:=\bigcap_{k\geq 0}\mc{P}_k \text{ and }f^n(G_\infty)\subseteq G_\infty.$$
Next, we show that $G_\infty$ is indeed a graph. Consider an edge $e\in G_0$. The arc $h_k(e)$ is composed of some edges, say $e_1,\dotsc, e_i$ in $\mc{E}_{k}$. Let
$L_k(e):=P_{e_1}\cup\cdots\cup P_{e_i}.$ Clearly $\mc{P}_k=\bigcup_{e\in\mc{E}_0}L_k(e).$
On the other hand, from the iterated construction of $G_k$ in Step 2, it holds that
$$h_{k+1}(e)\subseteq L_{k+1}(e)\subseteq L_{k}(e).$$
Let $L_{\infty}(e):=\bigcap_{k\geq 1}L_k(e)$. It then follows that
$h_\infty(e)\subseteq L_\infty(e).$
	
We claim that $h_\infty(e)=L_\infty(e)$ and it is an arc joining the two endpoints of $e$.
	From the continuity of $h_\infty$, the image $h_\infty(e)$ is an arcwise-connected continuum containing $\tu{end}(e)$. Then one can derive an arc $\gamma_e$ in $h_\infty(e)$ joining the two endpoints of $e$. We claim that $L_\infty(e)=\gamma_e$. For otherwise, one can find a point $\zeta\in L_\infty(e)\setminus \gamma_e$. For all $k\geq 0$, there exists a non-trivial edge $e_k\in\mc{E}_k$ such that the pipe $P_{e_k}$ of $e_k$ contains $\zeta$. Note that $\g_e$ lies in $L_k(e)$ and $P_{e_k}$ disconnects $L_k(e)$. Thus $\gamma_e$ has to pass through the ``passage" $P_{e_k}$ to connect the two components of $L_k(e)\setminus P_{e_k}$. We thus have the estimate
	$$\tu{dist}\,(\zeta,\gamma_e)\leq\tu{dist}\,(\zeta,\gamma_e\cap P_{e_k})\leq \tu{diam}\,P_{e_k}.$$
	As $\tu{diam}\,P_{e_k}$ tends to zero by point (5) in Step 2, it follows that $\tu{dist}(\zeta,\gamma_e)=0$, a contradiction.

We are left to check that the interiors of arcs $h_\infty(e)$ and $h_\infty(e')$ are  disjoint for $e\neq e'\in\mc{E}_0$. To see this, observe that, for any $k$, the sets $L_k(e)$ and $L_k(e')$ defined above can only possibly overlap on $B_{z,k}$ and $B_{z',k}$ with $\tu{end}(e)=\{z,z'\}$; while the disks $B_{z,k}$ and $B_{z',k}$ are shrinking to the points $z$ and $z'$, respectively,  as $k\to \infty$. So $$h_\infty(e)\cap h_\infty(e')=L_\infty(e)\cap L_\infty(e')\subseteq \{z,z'\}.$$
 Hence $G_\infty$ is a graph homeomorphic to $G_0$. Moreover, we have $G_\infty\subseteq\mc{N}(G_0,\epsilon)$. Let $\mc{V}_\infty:=\mc{V}$. By the same arguments on \cite[Proposition 11.7]{BM}, we conclude that the two graphs $G_\infty$ and $G$ are isotopic rel. $\mc{V}$. The proof of Theorem \ref{thm:invariant} is complete. 
\end{proof}


\end{document}